\documentclass[12pt]{amsart}

\usepackage{amsmath,amsthm,amssymb,amsfonts}
\usepackage[noadjust,nosort]{cite}

\usepackage{fullpage}
\usepackage{graphicx}
\usepackage[dvipsnames]{xcolor}

\usepackage[bookmarks=true,
    colorlinks=true,
    linkcolor=purple,
    citecolor=red,
    filecolor=orange,
    menucolor=purple
    urlcolor=green,
    breaklinks=true]{hyperref}

\usepackage[all]{xy}

\numberwithin{equation}{subsection}

\newtheorem{theorem}[equation]{Theorem}
\newtheorem{proposition}[equation]{Proposition}
\newtheorem{lemma}[equation]{Lemma}
\newtheorem{corollary}[equation]{Corollary}

\newtheorem*{carrier}{Carrier Lemma}

\theoremstyle{definition}
\newtheorem{definition}[equation]{Definition}
\newtheorem{example}[equation]{Example}
\newtheorem{construction}[equation]{Construction}

\newtheorem{remark}[equation]{Remark}

\newtheorem*{remark*}{Remark}

\newtheorem{convention}[equation]{Convention}

\renewcommand{\leq}{\leqslant}
\renewcommand{\geq}{\geqslant}
\renewcommand{\emptyset}{\varnothing}

\newcommand{\algebra}[1]{\mathbb{#1}}
\newcommand{\Rr}{\algebra{R}}

\newcommand{\setsystem}[1]{\mathcal{#1}}
\newcommand{\TT}{\setsystem{T}}
\newcommand{\UU}{\setsystem{U}}
\newcommand{\VV}{\setsystem{V}}
\newcommand{\power}{\setsystem{P}}

\newcommand{\id}{\mathrm{id}}
\newcommand{\proj}{\mathrm{proj}}
\newcommand{\conv}{\mathrm{conv}}

\newcommand{\nerve}{\mathrm{Nerve}}

\newcommand{\prodc}{\mathrm{prod}}
\newcommand{\simp}{\mathrm{simp}}

\newcommand{\order}{\mathrm{ord}}
\newcommand{\ocl}{\order_{\textsc{cl}}}

\newcommand{\closedsets}{\mathrm{Cl}}

\newcommand{\all}{\mathrm{all}}
\newcommand{\uall}{U^{\all}}
\newcommand{\vall}{V^{\all}}

\newcommand{\cone}{\mathrm{Cone}}
\newcommand{\link}{\mathrm{Link}}
\newcommand{\down}[1]{{\downarrow}{#1}}
\newcommand{\up}[1]{{\uparrow}{#1}}
\newcommand{\op}[1]{{#1}^{\mathrm{op}}}

\newcommand{\map}{\mathrm{Map}}

\newcommand{\fposet}{\mathbf{fPoset}}
\newcommand{\cl}{\mathbf{CL}}
\newcommand{\hcl}{\mathbf{hCL}}

\newcommand{\topcat}{\mathbf{Top}}
\newcommand{\htop}{\mathbf{hTop}}

\title{Topological Realizations of Finite Posets}

\author{Vin de Silva}
\address{Department of Mathematics \& Statistics, Pomona College, Claremont, California}

\begin{document}

\begin{abstract}
    I introduce a new class of cell complexes, called CL-complexes, that provide alternative topological realizations of finite posets. Each realization of a given poset has the same homotopy type as the order complex of that, so in some sense these spaces do not provide anything really new.
    Their value is that they enable short transparent proofs of standard results in poset homotopy. These include Quillen's Theorem~A for finite posets, the Nerve Theorem for good covers of a topological space, and Sheehy's multicover generalization of the Nerve Theorem.
    Order complexes retain a special status: the order complex of a finite poset admits CL-complex structures both for the poset itself and for its opposite poset.
    The definition of CL-complexes is motivated by an elementary generalization of the Carrier Lemma of J.~W.\ Walker, which in its original form constructs a canonical homotopy class of maps from a simplicial complex to a space covered by a suitable family of contractible sets.
    In the generalized lemma, the role played by simplicial complexes and the role played by spaces covered by contractible sets can both be fulfilled by CL-complexes.
    This sets up an equivalence of categories that tacitly underlies our approach.
\end{abstract}

\maketitle

\section{Introduction}
\label{sec:intro}

\subsection{Motivation}
\label{subsec:motivation}

The purpose of this paper is to introduce a new class of `cell complex' that provides alternative topological realizations of the homotopy type of a finite poset.
The standard---indeed, definitional---tool for converting posets into topological spaces is the order complex functor. Much of the established value of this functor resides at the homotopy level: we are usually interested in the homotopy type of the order complex, not its homeomorphism class.
The framework developed in this paper exploits this observation.
Each finite poset is assigned a large family of possible realizations that are homotopy equivalent to the order complex but not, in general, homeomorphic to it.
Each map of finite posets is assigned a canonical homotopy class of maps between respective realizations.

What is the advantage of doing this? 
My suggestion is that these new objects and their maps, which I call \emph{CL-complexes} and $\emph{CL-maps}$, provide transparent new ways of seeing known homotopy properties of finite posets. This plays out in various ways, some of which I enumerate here as proof strategies
(see Remark~\ref{rem:proof-strategies}):

\begin{itemize}
    \item[(S1)]
    Two posets are seen to be homotopy equivalent by exhibiting a single topological space that simultaneously realizes both posets.
    
    \medskip
    \item[(S2)]
    A map of posets is seen to be a homotopy equivalence by realizing it as a homeomorphism; most commonly as an identity map on a space that realizes both posets.
    
    \medskip
    \item[(S3)]
    Two poset maps are seen to be homotopic by exhibiting a map of topological spaces that simultaneously realizes both poset maps.

\end{itemize}
In the complementary direction:

\begin{itemize}
    \item[(S4)]
    Two topological spaces are seen to be homotopy equivalent by recognizing them as realizations of the same poset.

    \medskip
    \item[(S5)]
    A map between topological spaces is seen to be a homotopy equivalence by recognizing it as a realization of the identity map on a poset.

\end{itemize}
In these ways, the task of writing down homotopies and homotopy equivalences is replaced by the task of finding topological realizations that transparently reveal the result at hand.
A side effect is the absence of simplicial complexes from most of our reasoning. CL-complex arguments are desimplexified arguments.

The basis for this program is an adaptation of the Carrier Lemma, first articulated by Walker~\cite{Walker_1981}, following a partial statement by Lakser~\cite{Lakser_1971}, as a homotopy analogue of Eilenberg and Mac Lane's method of acyclic models in homological algebra~\cite{Eilenberg_MacLane_1953}.

\begin{carrier}[trad.\ version \cite{Walker_1981}]
    Let $K$ be a simplicial complex with geometric realization $|K|$. Let $X$ be a topological space with a `contractible carrier' for $K$, meaning a family of contractible subsets $(X^\sigma \mid \sigma \in K)$ indexed by the simplices of~$K$, which is monotone in the sense that $\sigma \leq \tau$ implies $X^\sigma \subseteq X^\tau$. Then:

    \quad
    (1) There exists a continuous map $f : |K| \to X$ with $f(|\sigma|) \subseteq X^{\sigma}$ for all $\sigma \in K$. 
    
    \quad
    (2) Any two such maps are homotopic through a family of other such maps.

    \noindent
    Thus there is a canonically specified homotopy class of maps $|K| \to X$ in this situation.
\end{carrier}

The statement is asymmetric: the domain is a geometric simplicial complex, the codomain is an arbitrary space with contractible carrier. In the present work, we obtain a symmetric version of the Carrier Lemma where the same type of object may be found on both sides of the map. These are the CL-complexes, which may be read as `Cellular Link-cofibrant complexes' (referring to their defining properties) or alernatively `Carrier Lemma complexes' (referring to how we use them).

We begin by weakening\footnote{%
Except for `finite' which is a stronger hypothesis than `not necessarily finite'.
} the hypotheses in such a way that the proof still works.
It is expedient to state this in the `relative' form concerning a map of posets.

\begin{carrier}[weakened hypotheses, Theorem~\ref{thm:carrier}]
    Let $\phi: P \to Q$ be a map of finite posets. Let $X,Y$ be topological spaces with monotone families of subsets $(X^p)$, $(Y^q)$ indexed by $P,Q$ respectively.
    Suppose the family $(X^p)$ is separated (Section~\ref{subsec:separated}) and link-cofibrant (Section~\ref{subsec:link-cofibrant}). Suppose that the subsets $Y^q$ are contractible. Then there is a unique homotopy class of continuous maps $f : X \to Y$ with the property that $f(X^p) \subseteq Y^{\phi(p)}$ for all~$p$.
\end{carrier}

Combining the hypotheses on the domain and codomain spaces,
we define
a CL-complex for a finite poset~$P$ to be a topological space $X$ with a separated, link-cofibrant monotone family $(X^p)$ of contractible subsets indexed by~$P$.

\begin{carrier}[CL-complex version, Theorem~\ref{thm:CL-map}]
    Let $\phi: P \to Q$ be a map of finite posets. Let $X,Y$ be CL-complexes for $P,Q$ respectively.
    Then there is a unique homotopy class of continuous maps $f : X \to Y$ with the property that $f(X^p) \subseteq Y^{\phi(p)}$ for all~$p$.  
\end{carrier}

Several consequences flow from this: most importantly, CL-complexes realizing the same finite poset are canonically homotopy equivalent.
They are easy to build: every finite poset admits CL-complex realizations, and the class of CL-complexes is closed under subcomplexes, finite products, and certain gluing operations.
Finite regular CW-complexes, including finite simplicial complexes, turn out to be CL-complexes for their own face posets; there is no need to go through barycentric subdivision.
The order complex retains a pivotal role: it admits CL-complex structures for both the poset itself and its opposite poset. This provides a useful bridge to certain applications such as the Nerve Theorem.

In one perspective, the benefits stem from an equivalence between the homotopy category of CL-complexes and the category of finite posets, where each finite poset is realized by CL-complexes of many different homeomorphism types that can be tailored to the application. It is this variety of representative spaces that enables some simple proofs (and one or two complicated proofs that may still provide some insight).
None of the applications is a genuinely new theorem. Rather, I suggest, they provide new heuristics for old results.

\subsection{Outline}
\label{subsec:outline}

The rest of the paper is organized as follows. Section~\ref{sec:posets} is about poset systems in a set. The key concept is a `separated' poset system, which I call a poset complex. Section~\ref{sec:topological-systems} concerns poset systems in a topological space. These may be closed, link-cofibrant, cellular. The Carrier Lemma is treated in Section~\ref{sec:carrier-lemma}. This paves the way to introduce CL-complexes in Section~\ref{sec:CL-complexes}. We slightly delay the construction of CL-complexes to a later subsection, first picking up some easy consequences and quick proofs. Sections \ref{sec:fiber-theorems} 
and~\ref{sec:nerve} give applications, to `fiber' theorems (such as Quillen's Theorem~A for finite posets, in absolute and relative forms) and theorems about nerves of covers, respectively. One highlight is an almost instant proof of Sheehy's multicover nerve theorem~\cite{Sheehy_2012}.

I offer a small caveat for the reader: the length of this paper may be somewhat misleading. The arguments needed to define, construct, and use CL-complexes are rather short, and can surely be condensed to two or three pages. This paper has become longer in order to give each concept its due, including examples, reformulations, and related tangents.

I leave as an open question the matter of handling infinite posets. Something should be possible for posets that are locally finite, in the sense that each element dominates at most finitely many other elements. The face posets for simplicial complexes and CW-complexes are of this type. Moreover, since every poset is homotopy equivalent to a locally finite poset (specifically its order complex viewed as a poset), this should give access to unrestricted versions of some of the results discussed here. On the other hand we would likely lose some of the slick proofs, which perhaps lessens the impetus for this work.

\section{Poset Systems in a Set}
\label{sec:posets}

In this section, we study poset systems in a set (without yet assuming a topology), and identify a `separation' condition that supports the inductive construction of functions. Eventually our posets will be finite, but we delay that requirement until it is needed.
Section~\ref{subsec:poset-terminology} sets up the terminology (some standard, some not). Sections \ref{subsec:P-systems} and~\ref{subsec:P-system-maps} introduce poset systems and maps between then. Section~\ref{subsec:separated} states the separation condition and derives various equivalent characterizations.

\subsection{Poset terminology}
\label{subsec:poset-terminology}

Let $P$ be a partially ordered set. We write $\op{P}$ for the `opposite poset', which has the same elements but the order relation reversed.
Unlike, say, the order complex of a poset, the concepts we introduce in this paper are not symmetric with respect to poset reversal.
We use language that is prejudiced in favour of the `down' properties that we need more often than the corresponding `up' properties.

Closed subsets of a poset play a central role in this paper.
A subset $A \subseteq P$ is called \emph{closed} (or \emph{down-closed}) if $a \in A$ and $p \leq a$ implies $p \in A$. We write $A \leq P$ to indicate that $A$ is a closed subset.
Given a subset $A \subseteq P$, the set
\[
\down{A} =
\{
p \in P \mid
\text{there exists $a \in A$ such that $p \leq a$}
\}
.
\]
is called the \emph{closure} (or \emph{down-closure}) of~$A$. It is the smallest closed subset of $P$ containing~$A$.
A closed set with a largest element is called a \emph{cone} (or \emph{down-cone}). We write
\[
\cone(p)
=
\down{\{p\}},
\quad
\link(p) = \cone(p) - \{p\}.
\]
These are called the \emph{cone} (or \emph{down-cone}) of~$p$ and the \emph{link} (or \emph{down-link}) of~$p$, respectively.

The terms \emph{up-closed}, \emph{up-closure}, \emph{up-cone}, \emph{up-link} and an operator $\up{}$ are defined similarly but in the opposite sense; here the prefix `up-' is compulsory and may not be omitted.

The {Alexandrov topology} on a poset \cite[\S1, paragraph 2]{Alexandroff_1937} specifies that a subset is topologically closed iff it is down-closed in the partial order. In this way, our chosen meanings of `closed' and `closure' coincide with their customary topological meanings.

A \emph{poset map} is a function between posets that is order-preserving in the sense that $p \leq p'$ implies $\phi(p) \leq \phi(p')$. If $\phi : P \to Q$ is a poset map, then $\op{\phi} : \op{P} \to \op{Q}$ will denote the same function but interpreted as a map between the opposites of the two posets.

The \emph{order complex} of a poset $P$, denoted $\order(P)$,\footnote{%
This is often written $\Delta(P)$ but in this paper I reserve $\Delta$ for complete simplicial complexes.
} is the set of nonempty finite chains in~$P$. It is a simplicial complex with geometric realization $|\order(P)|$. For a poset map $\phi : P \to Q$ we write $\order (\phi) : \order(P) \to \order(Q)$ for the induced map of simplicial complexes.
Note that $\order(P) = \order(\op{P})$ and $\order(\phi) = \order(\op{\phi})$.

\subsection{Poset systems}
\label{subsec:P-systems}

Let $P$ be a poset and let $X$ be a set. 
%
%
The constructions in this paper are based on the notion of a `poset system' (often called a `carrier' in the literature).

\begin{definition}[poset system]
A \emph{$P$-system in~$X$} is a family $(X^p)$ of subsets of~$X$, indexed by the elements of~$P$, which is monotone in the sense that $p \leq p'$ implies $X^p \subseteq X^{p'}$.
We extend the notation to write
\[
X^A = \bigcup_{a \in A} X^a
\]
for subsets $A \subseteq P$. This captures the notion of the region `covered' by~$A$.
Note that $X^A = X^{\down{A}}$ by monotonicity, and in particular $X^p = X^{\cone(p)}$.
We do not require $X^P = X$, meaning that the sets $(X^p)$ are not required to cover~$X$.
\end{definition}

\begin{definition}[indicator set]
Given a poset system $(X^p)$, we define the \emph{indicator set}
\[
P_x = \{ p \in P \mid x \in X^p \}
\]
of $x \in X$.
By monotonicity, this is an up-closed (or Alexandrov-open) subset of~$P$.
\end{definition}

\begin{remark}
\label{remar:notation-caution}
    The notation $X^p, P_x$ implicitly assumes that we are considering one fixed $P$-system in the set~$X$ at any given moment.
    This turns out to be sufficient for this paper, since we are not required to compare two or more $P$-systems in the same set.
    I leave it to the reader to devise suitable notation for other circumstances.
\end{remark}

\begin{example}[subsystem]
\label{ex:subsystem}
    Let $(X^p)$ be a $P$-system in~$X$ and let $A \subseteq P$.
    Then $(X^a) = (X^a \mid a \in A)$ is an $A$-system in~$X$. It is also an $A$-system in~$X^A$ (or any set containing $X^A$).
\end{example}

\begin{example}[tautological system]
    \label{ex:tautological-P-system}
    For any poset $P$, the formula
    \[
    P^p = \cone(p) = \down \{p\}
    \]
    defines a $P$-system in~$P$. We call this the \emph{tautological $P$-system}.
\end{example}

\begin{example}[simplicial complex]
    \label{ex:simplicial-P-system}
    Let $K$ be a simplicial complex and let $|K|$ denote its geometric realization.
    If we regard $K$ as a poset with respect to face inclusion, 
    then
    \[
    |K|^{\sigma} = |\sigma|
    \]
    defines a $K$-system of closed subsets in $|K|$: we associate to each simplex its corresponding geometric closed cell. We call this the \emph{face system} in~$|K|$.
\end{example}

\begin{remark}
\label{rem:K-tautological-face}
    The tautological system $(K^\sigma)$ on the simplicial complex $K$ viewed as a poset, and the face system $(|K|^\sigma)$ on the geometric realization $|K|$, are related by $|K|^\sigma = |K^\sigma|$.
\end{remark}

\begin{example}[standard $n$-simplex]
\label{ex:standard-n-simplex}
    As a special case of Example~\ref{ex:simplicial-P-system}, let $\Delta_n$ denote the set of non-empty subsets of $\{0, 1, \dots, n\}$. Its geometric realization $|\Delta_n|$ is the standard topological $n$-simplex. We write $\partial \Delta_n$ for $\Delta_n$ with its top element removed.
\end{example}

\begin{example}[CW-complex]
    \label{ex:CW-P-system}
    Let $X$ be a CW-complex, with open cells $(e_\alpha)$ indexed by a set $A$. Write $\overline{e_\alpha}$ for the closure of $e_\alpha$; this is the image of a closed disk $D_{\alpha}$ under a continuous map~$\Phi_{\alpha}$ called the `characteristic map' \cite[Appendix]{Hatcher_2002}.
    The transitive closure of the relation
    \[
    \overline{e_\alpha} \cap e_\beta \ne \emptyset
    \quad \Rightarrow \quad
    \beta \leq \alpha
    \]
    defines a poset structure on~$A$.
    Then
    \[
    X^{\alpha}
    = \bigcup_{\beta \leq \alpha} e_\beta
    = \bigcup_{\beta \leq \alpha} \overline{e_\beta}
    \]
    defines an $A$-system of subcomplexes of~$X$.
    We call this the \emph{face system} in~$X$, generalizing Example~\ref{ex:simplicial-P-system}.
    If $X$ is a normal CW-complex, meaning that every cell-closure is a subcomplex, then
    \[
    X^\alpha = \overline{e_\alpha}
    \]
    is a simpler equivalent definition.
    If $X$ is a regular CW-complex, meaning that $\Phi_\alpha : D_{\alpha} \to \overline{e_\alpha}$ is a homeomorphism for all~$\alpha$, then it is a normal CW-complex \cite[Chapter~III, Theorem~2.1]{Lundell_Weingram_1969} so the simpler definition applies in this case.
    
\end{example}

\begin{example}[product]
\label{ex:product}
    Let $(X^p)$ be a $P$-system in~$X$. Let $(Y^q)$ be a $Q$-system in~$Y$. Then
    \[
    (X \times Y)^{(p,q)}
    =
    X^p \times Y^q
    \]
    defines a $(P \times Q)$-system in $X \times Y$.
    %
\end{example}

\subsection{Maps of poset systems}
\label{subsec:P-system-maps}

Let $P, Q$ be posets. Let $(X^p), (Y^q)$ be poset systems in $X,Y$. Let $\phi : P \to Q$ be a poset map.
We define  \emph{$\phi$-map} to be a function $f : X \to Y$ such that
$
f(X^p) \subseteq Y^{\phi(p)}
$
for all $p \in P$.
When both posets are $P$, an $\id_P$-map is also called a \emph{$P$-map}.
As \emph{isomorphism} of $P$-systems $(X^p), (Y^p)$ is a bijection $f : X \to Y$ such that $f, f^{-1}$ are both $P$-maps; or, equivalently, such that $f(X^p) = Y^p$ for all $p$.

\begin{example}
\label{ex:poset-system-maps}
Here are some immediate examples.

(1)
    A poset map $\phi : P \to Q$ is a $\phi$-map for the tautological poset systems.
    
(2)
    Let $K,L$ be simplicial complexes on vertex sets $I,J$. A simplical map $K \to L$, defined as a map $\alpha : I \to J$ between vertex sets, induces a poset map $\phi_\alpha : K \to L$ by setting $\phi_\alpha(\sigma) = \alpha(\sigma)$, the image of the set~$\sigma$ under $\alpha$.
    The geometric realization $|\alpha| : |K| \to |L|$ is a $\phi_\alpha$-map with respect to the face systems on $|K|, |L|$. 

(3)
    Let $(X^p)$, $(Y^q)$ be $P, Q$ systems. Let $(X^p \times Y^q)$ be the product $(P\times Q)$-system.
    Let
    \begin{align*}
        \pi_P : & \xymatrix{P \times Q \ar[r] & P}
        &
        \pi_Q : & \xymatrix{P \times Q \ar[r] & Q}
        \\
        \pi_X : & \xymatrix{X \times Y \ar[r] & X}
        &
        \pi_Y : & \xymatrix{X \times Y \ar[r] & Y}
    \end{align*}
    denote the projection maps.
    Then $\pi_X$ is a $\pi_P$-map and $\pi_Y$ is a $\pi_Q$-map.
    This product system has the universal property of being the {largest} $(P\times Q)$-system on $X \times Y$ for which $\pi_X$ is a $\pi_P$-map and $\pi_Y$ is a $\pi_Q$-map.
    %

(4)
    Let $P,Q,R$ be posets. Let $(X^p), (Y^q), (Z^r)$ be poset systems on $X,Y,Z$. Suppose $\phi : P \to Q$ and $\psi : Q \to R$ are poset maps. If $f : X \to Y$ is a $\phi$-map and $g : Y \to Z$ is a $\psi$-map then $gf$ is a $\psi\phi$-map.
\end{example}

\begin{example}[$f$-pullback]
\label{ex:f-pullback}
    Let $(X^p)$ be a $P$-system in~$X$. Let $f : W \to X$ be a function. Then
    \[
    W^p = f^{-1}(X^p)
    \]
    defines a $P$-system on~$W$.
    It is the largest $P$-system on $W$ for which $f$ is a $P$-map.
\end{example}

\begin{example}[$\phi$-pushforward]
\label{ex:phi-pushforward}
    Let $(X^p)$ be a $P$-system in~$X$. Let $\phi : P \to Q$ be a poset map. Then
    \[
    X^q =
    \bigcup \,
    \{ X^p \mid p \in P, \, \phi(p) \leq q \}
    \]
    defines a $Q$-system in~$X$. It is the smallest $Q$-system for which $\id_X$ is a $\phi$-map.
\end{example}

\begin{remark}[Quillen fiber \cite{Quillen_1973}]
\label{rem:Quillen-fiber}
    The indexing set for the union in Example~\ref{ex:phi-pushforward} is called a \emph{Quillen (down-)fiber} for the map~$\phi$. It is a closed subset of~$P$, sometimes written:
    \[
    \phi/q
    = \phi^{-1}(Q_{\leq q})
    = \{ p \in P \mid \phi(p) \leq q \}
    \]
    Thus we may define the $\phi$-pushforward by $X^q = X^{\phi/q}$.
    Similarly, the \emph{Quillen up-fiber} is
    \[
    q \backslash \phi
    =
    \phi^{-1}(Q_{\geq q})
    =
    \{ p \in P \mid q \leq \phi(p) \}.
    \]
    This is up-closed, and $(X^{q \backslash \phi})$ is a $\op{Q}$-system rather than a $Q$-system.
\end{remark}

The observation, from Example~\ref{ex:phi-pushforward}, that $\id_X : X \to X$ is a $\phi$-map with respect to the $P$-system $(X^p)$ and the $\phi$-pushforward $Q$-system $(X^{\phi/q})$, will be used later on (Theorem~\ref{thm:QTA}, Example~\ref{ex:product-fiber-CL}). Here is a more general statement (for use in Theorem~\ref{thm:QTA-rel}).

\begin{proposition}
\label{prop:push-square}
    Suppose we have a commutative square of poset maps:
    \[
    \xymatrix{
    P
        \ar[r]^-{\phi}
        \ar[d]_-{\alpha}
    &
    Q
        \ar[d]^-{\beta}
    \\
    R
        \ar[r]^-{\psi}
    &
    S
    }
    \]
     Let $f : X \to Y$ be a $\phi$-map with respect to poset systems $(X^p)$, $(Y^q)$. Then:
     
    (1) $\phi$ restricts to a map $\alpha/r \to \beta/\psi(r)$ between Quillen fibers, for all $r \in R$.
        
    (2) $f$ is a $\psi$-map with respect to the pushforward poset systems $(X^{\alpha/r})$, $(Y^{\beta/s})$.
\end{proposition}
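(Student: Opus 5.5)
Both parts follow by unwinding the definitions of the Quillen fiber (Remark~\ref{rem:Quillen-fiber}) and of a $\phi$-map, using commutativity $\beta\phi = \psi\alpha$ together with monotonicity of the poset maps.

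For (1), fix $r \in R$ and take $p \in \alpha/r$, so $\alpha(p) \leq r$. Then
\[
\beta(\phi(p)) = \psi(\alpha(p)) \leq \psi(r),
\]
where the first step is commutativity of the square and the second is monotonicity of $\psi$. Hence $\phi(p) \in \beta/\psi(r)$. So $\phi$ maps the subset $\alpha/r$ into $\beta/\psi(r)$. The restriction is automatically order-preserving, being a restriction of the poset map $\phi$.

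For (2), the pushforward systems are $X^r = X^{\alpha/r}$ on $X$, indexed by $R$, and $Y^s = Y^{\beta/s}$ on $Y$, indexed by $S$; these are monotone by Example~\ref{ex:phi-pushforward}. We must show $f(X^{\alpha/r}) \subseteq Y^{\beta/\psi(r)}$ for each $r \in R$. The image of a union is the union of the images, so
\[
f(X^{\alpha/r}) = \bigcup_{p \in \alpha/r} f(X^p) \subseteq \bigcup_{p \in \alpha/r} Y^{\phi(p)}.
\]
The inclusion holds because $f$ is a $\phi$-map. By (1), each $\phi(p)$ with $p \in \alpha/r$ lies in $\beta/\psi(r)$. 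So every $Y^{\phi(p)}$ in the union is one of the sets making up $Y^{\beta/\psi(r)}$, and the right-hand side is contained in $Y^{\beta/\psi(r)}$, as required.

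There is no real obstacle here. The only care needed is keeping the two indexings straight: the pushforward along $\alpha$ is an $R$-system, the pushforward along $\beta$ is an $S$-system, and $\psi$ is the map comparing them. The special case $R = P$, $\alpha = \id$, $S = Q$, $\beta = \id$ recovers the statement that $f$ itself is a $\phi$-map, which serves as a consistency check.
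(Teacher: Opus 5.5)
Your proposal is correct and follows the paper's proof: part (1) is the same commutativity-plus-monotonicity computation $\beta(\phi(p)) = \psi(\alpha(p)) \leq \psi(r)$. For part (2), the paper simply says it follows from (1) and the $\phi$-map hypothesis, and your union-of-images argument is exactly that deduction written out.
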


\begin{proof}
    For (1), given $p \in \alpha/r$, meaning that $\alpha(p) \leq r$, we have $\beta(\phi(p)) = \psi(\alpha(p)) \leq \psi(r)$ so $\phi(p) \in \beta/\psi(r)$.
    Statement (2) follows from (1) and the hypothesis that $f$ is a $\phi$-map.
\end{proof}

\subsection{Separated poset systems}
\label{subsec:separated}

To define a function iteratively over a set $X$ using a poset system, we ask that the sets $X^p$ do not `interfere' with each other beyond what is implied by the poset structure. This is the essence of the separation condition, although the initial definition does not directly state this.

\begin{definition}[separated poset system]
\label{def:separated}
    A $P$-system $(X^p)$ is \emph{separated}\footnote{%
    The choice of the word `separated' is motivated by the observation in Example~\ref{ex:minimal-disjoint}.} %
    if each indicator set $P_x$ has a smallest element. Writing $p_x$ for the smallest element, it follows that $P_x = \up{\{p_x\}}$.
\end{definition}

\begin{definition}[poset complex]
\label{def:P-complex}
    A \emph{$P$-complex} is a set $X$ with a separated $P$-system $(X^p)$.
\end{definition}

For a separated system we have $X^P = X$, meaning that the system $(X^p)$ covers~$X$. Indeed each $P_x$ is non-empty so every $x \in X$ is contained in some $X^p$.

\begin{convention}
\label{conv:XP}
    We will frequently use the symbol $X^P$ as shorthand for a poset complex specified by a set~$X$ and a separated $P$-system $(X^p)$. 
\end{convention}

This is ambiguous in that $X^P$ refers also to the subset covered by~$P$.
Since $X^P = X$ for a separated system, the ambiguity is mostly harmless: the poset complex~$X^P$ is the set $X^P$ with additional structure.\footnote{%
By way of comparison, a topological space is usually called $X$ rather than $(X,\tau)$.}
The intended meaning should be clear from the context.

Here are some examples of poset complexes.

\begin{example}
\label{ex:separation-behaves}
The following poset systems are separated:

    (1) The tautological system on a poset.

    (2) The face system on a simplicial complex or CW-complex.

    (3) The product of two separated systems.

    (4) The $f$-pullback of a separated system.

    (5) The $\phi$-pushforward of a separated system.

\end{example}

\begin{proof}
    (1) We have $P_p = \up{\{p\}}$ with smallest element $p$.

    (2) Let $e_\alpha$ be the unique open cell containing $x \in X$. Then $x \in X^\beta$ implies that $x \in \overline{e_{\gamma}}$ for some $\gamma \leq \beta$. Thus $e_{\alpha} \cap \overline{e_{\gamma}}$ is nonempty, so $\alpha \leq \gamma \leq \beta$. It follows that $\alpha$ is the smallest element of $A_x$.


    (3) We have $(P \times Q)_{(x,y)} = P_x \times Q_y$ with smallest element $(p_x,q_y)$.

    (4) From $X^p = f^{-1}(Y^p)$ we get $P_x = P_{f(x)}$ with smallest element $p_{f(x)}$.

    (5) From $X^q = X^{\phi/q}$ we get $Q_x = \up{\,\phi(P_x)}$ with smallest element $\phi(p_x)$.
    
\end{proof}

\begin{remark}
\label{rem:product-notation}
    The product of $X^P$ and $Y^Q$ may be written $X^P \times Y^Q$ or $(X \times Y)^{P \times Q}$.
\end{remark}

The next two examples begin to reveal the structure of the separation condition.

\begin{example}[minimal elements]
\label{ex:minimal-disjoint}
    Let $X^P$ be a $P$-complex. For \emph{minimal} elements $a \in P$, the sets $X^a$ are mutually disjoint. To see this, note that for any $x \in X$ there is at most one element of the indicator set $P_x$ that can be minimal in~$P$, namely the smallest element~$p_x$.
    This observation motivates the word `separated'.
\end{example}

\begin{example}[$V$-complex]
\label{ex:V-complex}
    Let $V = (1 > 0 < 2)$ denote the V-shaped 3-element poset with a smallest element~0 and two maximal elements $1,2$.
    A $V$-system in a set~$X$ is specified by subsets $X^0, X^1, X^2 \subseteq X$ with $X^0 \subseteq X^1$ and $X^0 \subseteq X^2$. If it is separated then
    \begin{align*}
        X^0 &= \{ x \in X \mid p_x = 0 \} 
        \\
        X^1 &= \{ x \in X \mid p_x = 0 \text{\;or\;} 1 \} 
        \\
        X^2 &= \{ x \in X \mid p_x = 0 \text{\;or\;} 2 \} 
    \end{align*}
    from which it follows that $X = X^1 \cup X^2$ and $X^0 = X^1 \cap X^2$.
    In this way, we see that a $V$-complex is the `same thing' as a set $X$ with a cover by two subsets $X^1, X^2$.
\end{example}

\begin{example}
\label{ex:subcomplex}
     Let $X^P$ be a $P$-complex and let $A \leq P$. Then $X^A$ is an $A$-complex.
\end{example}

Here $X^A$ refers to the set $X^A$ with the poset system $(X^a \mid a \in A)$.

\begin{proof}
    Let $x \in X^A$. This means that $A_x = P_x \cap A$ is nonempty. Since $A$ is closed, $A_x$ must also contain the smallest element $p_x$ of $P_x$, which is therefore the smallest element of~$A_x$.
\end{proof}

We call $X^A$ a \emph{subcomplex} of $X^P$ in the scenario above.
In general, a sub-{\it system}
of a separated system need not be separated. For example, let $X^V$ be a $V$-complex
(Example~\ref{ex:V-complex})
with $X^0$ nonempty and let $A = \{1,2\}$. Then $(X^a)$ is not a separated $A$-system because $X^1, X^2$ are not disjoint.
Here $A \subseteq V$ but $A \not\leq V$.
%

There is a certain rigidity to separated systems.
Let $\closedsets(P)$ denote the set of closed subsets of the poset~$P$. Let $\power(X)$ denote the powerset of the set~$X$. These are bounded lattices: partially ordered by inclusion, their algebraic operations include pairwise meet ($\cap$), pairwise join ($\cup$), smallest element ($\emptyset$), largest element ($P, X$ respectively).

\begin{proposition}[bounded lattices interpretation]
\label{prop:bounded-lattice-map}
    Let $(X^p)$ be a $P$-system in a set~$X$. Consider the following properties:

    \begin{enumerate}
        \item
        $(X^p)$ is separated.

        \medskip
        \item
        The mapping $A \mapsto X^A$ defines a bounded-lattice homomorphism 
        $\closedsets(P) \to \power(X)$.
        
    \end{enumerate}
     Then (1) $\Rightarrow$ (2). If $P$ is finite (or, more generally, well founded\,\footnote{%
        A poset is well founded if every nonempty subset has a minimal element.
        }) then (2) $\Rightarrow$ (1).
\end{proposition}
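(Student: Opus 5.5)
For (1) $\Rightarrow$ (2), I will check the four bounded-lattice conditions one at a time. Three of them hold for any $P$-system. First, $X^\emptyset = \emptyset$ because it is an empty union. Second, $X^{A \cup B} = X^A \cup X^B$ because unions commute with unions. Third, $X^P = X$ for a separated system, as already noted after Definition~\ref{def:P-complex}.

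The only real content is meets: $X^{A \cap B} = X^A \cap X^B$ for closed $A, B \leq P$. The inclusion $\subseteq$ follows from monotonicity of $A \mapsto X^A$. For $\supseteq$, take $x \in X^A \cap X^B$. Then $P_x$ meets both $A$ and $B$. Since $P_x = \up{\{p_x\}}$, there are elements $a \in A$ and $b \in B$ with $p_x \leq a$ and $p_x \leq b$. Closedness of $A$ and $B$ then gives $p_x \in A \cap B$, so $x \in X^{p_x} \subseteq X^{A \cap B}$. This is the same argument as in Example~\ref{ex:subcomplex}.

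For (2) $\Rightarrow$ (1) with $P$ well founded, fix $x \in X$. The indicator set $P_x$ is nonempty because $X^P = X$. By well-foundedness it has a minimal element $m$, and I aim to show that $m$ is in fact the smallest element. Let $q \in P_x$ be arbitrary and put $A = \cone(m)$, $B = \cone(q)$. Then $x \in X^A \cap X^B = X^{A \cap B}$, so $x \in X^r$ for some $r$ with $r \leq m$ and $r \leq q$. Since $r \in P_x$ and $r \leq m$, minimality forces $r = m$. Hence $m \leq q$, and $m$ is the smallest element of $P_x$.

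The main point needing care is this last step: we need the meet-preservation only for principal cones, and well-foundedness is exactly what supplies a minimal element to compare against. Without it, $P_x$ could be, say, an infinite descending chain, which has no smallest element yet is still compatible with (2). I do not expect any other obstacle.
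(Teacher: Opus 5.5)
Your proposal is correct and follows the paper's proof essentially step for step. The empty-set, join and top conditions are checked directly, and meets are handled via the smallest element $p_x$ together with the down-closedness of $A$ and $B$. The converse takes a minimal element $m$ of $P_x$ and compares it with an arbitrary $q \in P_x$ using $X^{\cone(m)} \cap X^{\cone(q)} = X^{\cone(m)\cap\cone(q)}$.
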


Here `bounded-lattice homomorphism' means a map between bounded lattices that preserves meet, join, smallest element, largest element.
For (2) this amounts to
\[
X^{A \cap B} = X^A \cap X^B,
\quad
X^{A \cup B} = X^A \cup X^B,
\quad
X^\emptyset = \emptyset,
\quad
X^P = X
\]
whenever $A, B \leq P$.

\begin{proof}
    Assume (1).
    Even without (1) we have
    \[
    X^{\emptyset} = \emptyset,
    \quad
    X^{A \cup B} = X^A \cup X^B,
    \quad
    X^{A \cap B} \subseteq X^A \cap X^B
    \]
    for arbitrary $P$-systems and subsets $A, B \subseteq P$.
    With (1) we have $X^P = X$ since each $P_x$ is nonempty. 
    Now let $A, B \leq P$ and consider $x \in X^A \cap X^B$.
    Then $x \in X^a \cap X^b$ for some $a \in A$, $b \in B$. Let $p$ be the smallest element of $P_x$. Since $a,b \in X_p$ we have $p \leq a$ and $p \leq b$ and therefore $p \in A$ and $p \in B$. Thus $x \in X^p \subseteq X^{A \cap B}$. This argument establishes $X^A \cap X^B \subseteq X^{A \cap B}$ and therefore $X^{A \cap B} = X^A \cap X^B$.
    %

    Assume (2) and suppose $P$ is well founded.
    Since $X^P = X$, every $P_x$ is nonempty and therefore has a minimal element~$p$. We show that $p$ is the smallest element. Consider $q \in P_x$. Then
    \[
    x
    \in
    X^p \cap X^q
    =
    X^{\cone(p)} \cap X^{\cone(q)}
    =
    X^{\cone(p) \cap \cone(q)}
    \]
    so some element of $\cone(p) \cap \cone(q)$ belongs to $P_x$. By minimality, this element of $\cone(p)$ can only be~$p$. This implies $p \in \cone(q)$ so $p \leq q$.
    %
\end{proof}

Here is a very different characterization of separated systems.

\begin{proposition}[classifying map interpretation]
\label{prop:classifying-map}
    Let $X$ be a set, let $P$ be a poset, and let $P^P$ be the tautological $P$-complex (so $P^p = \cone(p)$ for all~$p$).

    \begin{enumerate}
    \item 
    There is a bijective correspondence between $P$-complex structures on~$X$ and functions $m : X \to P$. The correspondence is defined as follows.
 
    \begin{itemize}
    \medskip
    \item
        Given a $P$-complex $X^P$, define its \emph{classifying map} $m(x) = \min P_x$.
        
    \medskip
    \item
        Given a function $m : X \to P$, let $X^P$ be the $m$-pullback of $P^P$.

    \end{itemize}
    \end{enumerate}

    \noindent
    For such $X^P$ and $m : X \to P$, consider a map $f : W \to X$ and a poset map $\phi : P \to Q$.

    \begin{enumerate}
    \setcounter{enumi}{1}
    \item 
    The classifying map of the $f$-pullback $(W^p)$ of $(X^p)$ is $m \circ f$.

    \medskip
    \item
    The classifying map of the $\phi$-pushforward $(X^q)$ of $(X^p)$ is $\phi \circ m$.
    \end{enumerate}
 
\end{proposition}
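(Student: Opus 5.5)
The plan is to verify directly that the two constructions in part~(1) are mutually inverse. Both rest on one identity: for any function $m : X \to P$, the $m$-pullback of the tautological system is
\[
X^p = m^{-1}(\cone(p)) = \{x \in X \mid m(x) \leq p\}.
\]
Hence the indicator set of $x$ in this pullback is $P_x = \{p \mid m(x) \leq p\} = \up{\{m(x)\}}$.

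\textbf{Step 1.} Start from a function $m$. The indicator set $\up{\{m(x)\}}$ has smallest element $m(x)$, so the pullback system is separated; this is also Example~\ref{ex:separation-behaves}(4) combined with (1). Its classifying map is $x \mapsto m(x)$, so going from a function to a complex and back returns the original function.

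\textbf{Step 2.} Start from a $P$-complex $X^P$ and set $m(x) = p_x = \min P_x$. Separation gives $P_x = \up{\{p_x\}}$. Therefore $x \in X^p$ if and only if $p \in P_x$, if and only if $p_x \leq p$, if and only if $m(x) \in \cone(p)$. This shows $X^p = m^{-1}(P^p)$, so going from a complex to a function and back returns the original system. These two steps establish the bijection in~(1).

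\textbf{Step 3.} For part~(2), let $f : W \to X$ and let $(W^p)$ be the $f$-pullback, so $W^p = f^{-1}(X^p)$. Then $W^p = f^{-1}(m^{-1}(\cone(p))) = (m\circ f)^{-1}(\cone(p))$. By part~(1), this is exactly the $P$-complex with classifying map $m \circ f$. Equivalently, as in Example~\ref{ex:separation-behaves}(4), $P_w = P_{f(w)}$, whose minimum is $m(f(w))$.

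\textbf{Step 4.} For part~(3), use the pushforward $X^q = X^{\phi/q}$. Then $x \in X^q$ if and only if there is some $p$ with $\phi(p) \leq q$ and $m(x) \leq p$. This condition is equivalent to $\phi(m(x)) \leq q$: in one direction take $p = m(x)$, and in the other use monotonicity of $\phi$, which gives $\phi(m(x)) \leq \phi(p) \leq q$. So $Q_x = \up{\{\phi(m(x))\}}$, whose minimum is $\phi(m(x))$. This agrees with Example~\ref{ex:separation-behaves}(5).

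There is no real obstacle. The only point needing care is in part~(3), where the monotonicity of $\phi$ is used to show that the pushforward indicator set is exactly the principal up-set at $\phi(m(x))$.
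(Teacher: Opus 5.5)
Your proposal is correct and follows essentially the same route as the paper. Part~(1) uses the same two chains of equivalences to show the constructions are mutually inverse, and parts~(2) and~(3) rest on the same smallest-element identifications $P_w = P_{f(w)}$ and $Q_x = \up{\{\phi(m(x))\}}$ that the paper cites from Example~\ref{ex:separation-behaves}(4),(5); you simply verify them directly.
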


\begin{proof}
    (1)
    We show that the two operations are mutually inverse.
    Let $X^P$ be a $P$-complex and define $m(x) = \min P_x$. Then $x \in X^p$ iff $p \in P_x$ iff $m(x) \leq p$. Thus $X^p = m^{-1}(\cone(p))$ so $X^P$ is the $m$-pullback of $P^P$.
    In the other direction,
    let $m : X \to P$ and define $X^P$ to be the $m$-pullback of $P^P$. Then $p \in P_x$ iff $x \in X^p$ iff $m(x) \in P^p$ iff $m(x) \leq p$, so $P_x = \up{\{m(x)\}}$ and $m$ is recovered by $m(x) = \min P_x$.
    
    (2), (3) follow from the identification of smallest elements in Example~\ref{ex:separation-behaves} (4), (5).
\end{proof}

\begin{remark}[cell structure]
The word `complex' alludes to the following `cell decomposition' of a poset complex~$X^P$.
\begin{alignat*}{3}
&
\text{`closed cell'}
&\qquad&
X^p
&&=
\left\{ x \in X \mid m(x) \leq p \right\}
\\
&
\text{`open cell'}
&&
X^p - X^{\link(p)}
&&= \left\{ x \in X \mid m(x) = p \right\}
\end{alignat*}
This becomes more meaningful when we consider topological poset complexes in Section~\ref{sec:topological-systems}.
Just as with CW-complexes, the `open' in `open cell' doesn't mean topologically open in~$X$.    
\end{remark}

\section{Poset Systems in a Topological Space}
\label{sec:topological-systems}

A \emph{topological poset system} is a poset system $(X^p)$ in a topological space $X$. If it is separated then we call $X^P$ a \emph{topological poset complex}. A {$\phi$-map} of topological poset systems is a continuous map that is a $\phi$-map of poset systems. An isomorphism of topological poset systems is a homeomorphism that is an isomorphism of poset systems. We may use phrases like `continuous $\phi$-map' or `topological isomorphism' so we don't forget.

In this section,
I describe three types of topological poset system---closed systems (Section~\ref{subsec:closed}), link-cofibrant complexes (Section~\ref{subsec:link-cofibrant}), cellular systems (Section~\ref{subsec:cellular})---and present a protocol for attaching topological poset systems to each other (Section~\ref{subsec:gluing}).
There are no assumptions on the underlying topological spaces. In particular, they need not be Hausdorff.

\subsection{Closed poset systems}
\label{subsec:closed}

A \emph{closed poset system} is a topological poset system $(X^p)$ with the property that $X^A$ is a closed subset of~$X$ for all $A \subseteq P$. This implies that all subsystem inclusions $X^A \subseteq X^B$ are closed maps.

It suffices to verify the condition for closed subsets $A \leq P$ because in general $X^A = X^{\down{A}}$. When $P$ is finite it suffices to verify that $X^p$ is closed for all $p \in P$, because then every $X^A$ is a finite union of closed sets.

For separated poset systems, we have the following characterization:

\begin{proposition}
\label{prop:cc-classifying}
    A topological $P$-complex $X^P$ is closed iff its classifying map $m : X \to P$  is continuous with respect to the Alexandrov topology on $P$.
\end{proposition}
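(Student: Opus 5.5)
The plan is to compare preimages of Alexandrov-closed sets under $m$ with the sets $X^A$. By Proposition~\ref{prop:classifying-map}(1), $X^P$ is the $m$-pullback of the tautological complex, so $X^p = m^{-1}(\cone(p))$ for each $p \in P$. The key identity to establish first is that, for every closed (down-closed) subset $A \leq P$,
\[
X^A = m^{-1}(A).
\]
One inclusion is immediate: if $x \in X^a$ with $a \in A$, then $m(x) \leq a$, so $m(x) \in A$ because $A$ is down-closed. For the other, if $m(x) \in A$ then $x \in X^{m(x)} \subseteq X^A$, since $m(x) \in P_x$.

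With the identity in hand, both directions are a matter of unwinding definitions.

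\emph{Continuity implies closed.} Suppose $m$ is continuous for the Alexandrov topology, in which the closed sets are exactly the down-closed sets. For any $A \leq P$ the set $A$ is Alexandrov-closed, so $X^A = m^{-1}(A)$ is closed in $X$. For an arbitrary subset $A \subseteq P$ we have $X^A = X^{\down{A}}$, so it is also closed. Hence the system is closed.

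\emph{Closed implies continuity.} Suppose the system is closed. Every Alexandrov-closed subset of $P$ is some $A \leq P$, and its preimage is $m^{-1}(A) = X^A$, which is closed by hypothesis. So $m$ is continuous.

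I expect no real obstacle. The only point requiring care is that $P$ is not assumed finite, so one must use the definition of a closed system on all $A \leq P$ rather than only on the cones $X^p$. The identity $X^A = m^{-1}(A)$ handles arbitrary down-closed $A$ directly, so no finiteness is needed.
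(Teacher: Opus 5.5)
Your proof is correct and follows essentially the same route as the paper: the paper also establishes $X^A = m^{-1}(A)$ for every down-closed $A \leq P$ and concludes immediately because the Alexandrov-closed subsets of $P$ are exactly the down-closed ones. You additionally write out the two inclusions of that identity, which the paper leaves implicit.
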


\begin{proof}
    For $A \leq P$ we have $X^A = m^{-1}(\down{A}) = m^{-1}(A)$. Thus $X^P$ is closed iff the preimage $m^{-1}(A)$ is closed for every $A \leq P$, that is, for every Alexandrov-closed $A \subseteq P$.
\end{proof}

\begin{example}
\label{ex:closed}
The following topological poset systems are closed:

    (1) The tautological system on a poset with the Alexandrov topology.

    (2) The face system on a simplicial complex or CW-complex.

    (3) Any subsystem of a closed poset system.

    (4) The product of two closed poset complexes.

    (5) The $f$-pullback of a closed poset system, for a continuous map $f$.

    (6) The $\phi$-pushforward of a closed poset system, for a poset map $\phi : P \to Q$.
\end{example}

\begin{proof}
    The proofs of (1), (2), (3), (5), (6) are immediate. For (4), let $X^P, Y^Q$ be closed complexes with classifying maps $m : X \to P$, $n : Y \to Q$. Then $m \times n : X \times Y \to P \times Q$ is the classifying map for $X^P \times Y^Q$. Now $m,n$ are continuous by Proposition~\ref{prop:cc-classifying}, so $m \times n$ is continuous with respect to the product topology on $P \times Q$. This topology is the same as the (ostensibly finer) Alexandrov topology on $P \times Q$, because every Alexandrov-open set $U \subseteq P \times Q$ is a union of product open sets:
    \[
    U
    = \bigcup_{(p,q) \in U} \up{\{(p,q)\}}
    = \bigcup_{(p,q) \in U} \up{\{p\}} \times \up{\{q\}}
    \]
    Then $X^P \times Y^Q$ is closed by Proposition~\ref{prop:cc-classifying}.
\end{proof}

\subsection{Link-cofibrant complexes}
\label{subsec:link-cofibrant}

The following property enables a closed poset complex to play the role played by simplicial complexes in the traditional Carrier Lemma. Note that closedness is a required pre-condition.

\begin{definition}[link-cofibrancy]
A \emph{link-cofibrant} poset complex is a closed topological $P$-complex $X^P$ such that
for all $p \in P$ the inclusion
\[
\xymatrix{
X^{\link(p)}
    \ar[r]
&
X^{\cone(p)} = X^p
}
\]
is a cofibration. These inclusions are closed maps since $X^P$ is a closed complex.
\end{definition}

We review the properties of cofibrations. I recommend Brown \cite[Chapter 7]{Brown_2006} for a clear exposition.
A subspace inclusion map $U \to X$ is called a {cofibration}, or cofibrant inclusion, if it satisfies the {homotopy extension property}: given a continuous map $f : X \to Y$ to any target space~$Y$, any homotopy on~$U$ starting at $f|_U$ has an extension to a homotopy on~$X$ starting at~$f$.

\begin{proposition}[{\cite[7.3.2]{Brown_2006}}]
\label{prop:cf-composition}
    A composite of cofibrations is a cofibration.
    \qed
\end{proposition}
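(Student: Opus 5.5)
The plan is to verify the homotopy extension property for the composite directly from the definition given above. Let $U \to V$ and $V \to X$ be cofibrations, so that $U \subseteq V \subseteq X$. We must show that $U \to X$ is a cofibration. Fix a continuous map $f : X \to Y$ and a homotopy $h : U \times I \to Y$ with $h_0 = f|_U$.

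The extension is built in two stages, one for each given cofibration.

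\emph{Stage one.} Consider the map $f|_V : V \to Y$. The homotopy $h$ starts at $(f|_V)|_U = f|_U$. Since $U \to V$ is a cofibration, $h$ extends to a homotopy $k : V \times I \to Y$ with $k_0 = f|_V$ and $k|_{U \times I} = h$.

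\emph{Stage two.} Now consider the map $f : X \to Y$ together with the homotopy $k$ on $V$. This homotopy starts at $f|_V$. Since $V \to X$ is a cofibration, $k$ extends to a homotopy $H : X \times I \to Y$ with $H_0 = f$ and $H|_{V \times I} = k$.

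It remains to check that $H$ is the required extension. It starts at $f$ by construction. Its restriction to $U \times I$ is
\[
H|_{U \times I} = (H|_{V\times I})|_{U \times I} = k|_{U\times I} = h .
\]
So $H$ extends $h$ and starts at $f$. This is exactly the homotopy extension property for $U \to X$.

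There is no real obstacle here; the only point needing care is the bookkeeping. The intermediate homotopy $k$ must start at $f|_V$, and not at some arbitrary map, so that the second application of the homotopy extension property is legitimate. This is guaranteed because stage one is applied to the map $f|_V$ itself.

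Note also that the argument uses the homotopy extension property with respect to every target space $Y$, as in the definition. We therefore need no closedness assumptions and no retraction characterization of cofibrations such as $X \times I \to X \times \{0\} \cup U \times I$.
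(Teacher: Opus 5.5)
Your proof is correct: two successive applications of the homotopy extension property are exactly what is needed, first to $f|_V$ along $U \to V$ and then to $f$ along $V \to X$ with the intermediate homotopy $k$. The paper gives no argument of its own and only cites Brown 7.3.2. Your direct two-stage extension is the standard proof of that result.
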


\begin{proposition}[special case of {\cite[7.3.4]{Brown_2006}}]
\label{prop:cf-MV}
Let $U, V$ be closed subspaces of a topological space. If the inclusion $U \cap V \to V$ is a cofibration then the inclusion $U \to U \cup V$ is a cofibration.
\qed
\end{proposition}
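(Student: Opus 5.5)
The statement is the special case of Brown's 7.3.4 in which $U$ and $V$ are closed: if $U \cap V \to V$ is a cofibration, then $U \to U \cup V$ is a cofibration. The plan is to verify the homotopy extension property directly, using the fact that $U \cup V$ is the pushout of $U \leftarrow U \cap V \rightarrow V$. Closedness is what makes this pushout description valid topologically and not just set-theoretically.

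Fix a target space $Y$, a continuous map $f : U \cup V \to Y$, and a homotopy $h : U \times I \to Y$ with $h_0 = f|_U$. We need a homotopy $H : (U \cup V) \times I \to Y$ with $H_0 = f$ and $H|_{U \times I} = h$.

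First restrict $h$ to $(U \cap V) \times I$. This is a homotopy on $U \cap V$ starting at $f|_{U \cap V}$. Since $U \cap V \to V$ is a cofibration, applying the homotopy extension property to $f|_V : V \to Y$ gives a homotopy $k : V \times I \to Y$ with $k_0 = f|_V$ and $k = h$ on $(U \cap V) \times I$.

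Now define $H$ by $H = h$ on $U \times I$ and $H = k$ on $V \times I$. The two pieces agree on the overlap $(U \cap V) \times I$, so $H$ is well defined. Moreover $H_0 = f$, because $H_0$ agrees with $f$ on both $U$ and $V$.

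The only real point is continuity of $H$. Because $U$ and $V$ are closed in $U \cup V$, the sets $U \times I$ and $V \times I$ are closed in $(U \cup V) \times I$, and together they cover it. The pasting lemma for a finite closed cover then gives continuity of $H$. This is the step where the closedness hypothesis is essential: without it the glued map could fail to be continuous.
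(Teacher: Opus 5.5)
Your proof is correct. The paper gives no argument of its own: it only cites Brown's general result that cofibrations are preserved under pushout. Your direct verification of the homotopy extension property is that argument specialized to closed subspaces, with the pasting lemma on the closed cover $\{U \times I,\, V \times I\}$ of $(U \cup V) \times I$ replacing the general fact that $- \times I$ preserves adjunction spaces.
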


\begin{proposition}[subcomplex inclusions]
\label{prop:cf-subcomplexes}
    Let $X^P$ be a link-cofibrant complex over a finite poset. Then for any $A \leq B \leq P$ the inclusion $X^A \to X^B$ is a cofibration.    
\end{proposition}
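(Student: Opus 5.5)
By Proposition~\ref{prop:cf-composition} it suffices to treat the case where $B = A \cup \{p\}$ for a single element $p \notin A$. Since $P$ is finite, any pair $A \leq B$ can be joined by a chain of closed subsets
\[
A = A_0 \leq A_1 \leq \cdots \leq A_k = B,
\]
each obtained from the previous one by adjoining one element. To build the chain, list the elements of $B - A$ in an order compatible with the partial order (a linear extension) and add them one at a time. Each $A_i$ is then closed: when we adjoin $p$, every element below $p$ lies either in $A$ or in $B - A$, and in the latter case it comes earlier in the list. The composite of the resulting inclusions $X^{A_{i-1}} \to X^{A_i}$ is $X^A \to X^B$.

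For the one-step case, let $B = A \cup \{p\}$ with both $A$ and $B$ closed. Then $\link(p) \subseteq A$, because every element strictly below $p$ lies in $B$ and differs from $p$. Also $X^B = X^A \cup X^p$. Set $U = X^A$ and $V = X^p = X^{\cone(p)}$. Both are closed subspaces of $X$, since $X^P$ is a closed complex.

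The key identification is $U \cap V = X^{\link(p)}$. Separatedness gives the lattice property of Proposition~\ref{prop:bounded-lattice-map}, so
\[
X^A \cap X^{\cone(p)} = X^{A \cap \cone(p)}.
\]
Moreover $A \cap \cone(p) = \link(p)$: the inclusion $\supseteq$ was just shown, and $\subseteq$ holds because $p \notin A$. By link-cofibrancy, the inclusion $U \cap V = X^{\link(p)} \to X^p = V$ is a cofibration. Proposition~\ref{prop:cf-MV} then shows that $U \to U \cup V$, which is exactly $X^A \to X^B$, is a cofibration.

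The only point needing care is checking that $A \cap \cone(p) = \link(p)$ and that the intermediate sets $A_i$ are closed. Both follow directly from the closedness of $A$ and $B$ together with the choice of a linear extension. The remaining hypotheses are closedness of the subspaces, needed for Proposition~\ref{prop:cf-MV}, and separatedness, needed for the intersection formula; both are supplied by the definition of a link-cofibrant complex. If $A = B$ the inclusion is the identity, which is trivially a cofibration.
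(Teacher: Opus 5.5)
Your proof is correct and follows the paper's argument: the one-step case $B = A \cup \{p\}$ uses $X^A \cap X^{\cone(p)} = X^{\link(p)}$, link-cofibrancy and Proposition~\ref{prop:cf-MV}, and the general case follows by iterating with Proposition~\ref{prop:cf-composition}. Your linear-extension check that the intermediate sets are closed simply spells out the paper's ``iterate this step,'' in which the paper adjoins a minimal element of what remains at each stage.
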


\begin{proof}
    Consider the special case $B = A \cup \{p\}$ where $p \in P-A$ is minimal. The map
    \[
    \xymatrix{
    X^{\link(p)}
    =
    X^{A \cap \cone(p)} 
    =
    X^A \cap X^{\cone(p)}
    \ar[r]
    &
    X^{\cone(p)}
    }
    \]
    is a cofibration by hypothesis, so the map
    \[
    \xymatrix{
    X^A
    \ar[r]
    &
    X^A \cup X^{\cone(p)}
    =
    X^B
    }
    \]
    is a cofibration by Proposition~\ref{prop:cf-MV}. Now iterate this step using Proposition~\ref{prop:cf-composition}.
\end{proof}

The following property of closed cofibrations was proved by Str{\o}m~\cite{Strom_1968}.

\begin{proposition}[{\cite[Theorem~6]{Strom_1968}, \cite[7.3.8]{Brown_2006}}]
\label{prop:cf-product}
If the closed inclusions $U \to X$ and $V \to Y$ are cofibrations, then the closed inclusion $\big(U \times Y\big) \cup \big(X \times V\big) \to X \times Y$ is a cofibration.
\qed
\end{proposition}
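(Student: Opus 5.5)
The approach is Strøm's numerical characterization of closed cofibrations, which turns the statement into an explicit construction. Recall (Strøm; see also Brown, Chapter~7) that a closed inclusion $U \to X$ is a cofibration if and only if $(X,U)$ is a \emph{strong NDR pair}. This means there exist a continuous function $u : X \to [0,1]$ with $u^{-1}(0) = U$, and a homotopy $H : X \times [0,1] \to X$, such that:
\begin{itemize}
\item $H(x,0) = x$ for all $x$, and $H(a,t) = a$ for $a \in U$;
\item $H(x,t) \in U$ whenever $t > u(x)$.
\end{itemize}
The equivalence comes from the retraction characterization: $U \to X$ is a cofibration iff $X \times \{0\} \cup U \times [0,1]$ is a retract of $X \times [0,1]$. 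From a retraction $r$ one takes $u(x) = \sup_t |t - \mathrm{pr}_2 r(x,t)|$ and $H = \mathrm{pr}_1 r$. Closedness of $U$ is used to get $u^{-1}(0) = U$ exactly. I take this characterization as the key tool. By the same retraction argument, a suitable pair $(w,F)$ for $W = (U \times Y) \cup (X \times V)$ inside $X \times Y$ suffices.

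First I normalize the data. Let $(u,H)$ and $(v,K)$ be the data for the two pairs. Reparametrizing, I arrange that $H(x,t) \in U$ whenever $t \geq u(x)$ and $u(x) < 1$, and similarly for $K$. For example, replace $u$ by $\min(1, 2u)$ and reparametrize time in $H$ so that the set where $H$ has reached $U$ contains the closed region $\{t \geq u(x)\}$. Then I set
\[ w(x,y) = \min\bigl(u(x), v(y)\bigr), \]
which is continuous with $w^{-1}(0) = (U \times Y) \cup (X \times V) = W$; this set is closed. Next I define the homotopy by Strøm's recipe, stopping each factor's deformation according to the other factor's function:
\[ F\bigl((x,y),t\bigr) = \Bigl(H\bigl(x, \min(t, v(y))\bigr),\; K\bigl(y, \min(t, u(x))\bigr)\Bigr). \]
The routine checks are as follows. $F$ is continuous and $F(\cdot,0) = \id$. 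On $U \times Y$ we have $u(x) = 0$, so the second factor is frozen and the first is fixed by $H$; thus $F$ is stationary on $U \times Y$, and symmetrically on $X \times V$.

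The main obstacle is the last condition: $F((x,y),t) \in W$ whenever $t > w(x,y)$. Suppose $u(x) \leq v(y)$ and $t > u(x)$. Then $\min(t, v(y)) \geq u(x)$, so the first coordinate lies in $U$ by the normalized closed condition $t \geq u(x)$. This is exactly why the normalization step is needed. The boundary case is $\min(t, v(y)) = u(x) = v(y)$ together with $u(x)$ possibly equal to $1$. I would handle it by first rescaling so that $u, v$ take values in $[0, \tfrac12]$ and the homotopies finish by time $\tfrac12$. Strøm's original argument instead uses the quotient $u/v$ on the region where $v > 0$; I would fall back on it if the rescaling trick runs into continuity problems.

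Once these conditions are verified, $(w,F)$ exhibits $(X \times Y, W)$ as a strong NDR pair. The closed inclusion $W \to X \times Y$ is then a cofibration by the converse direction of Strøm's characterization.
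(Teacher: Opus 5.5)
Your proof is correct, and it is the standard Str{\o}m-structure argument behind the results the paper cites; the paper gives no proof of its own beyond citing Str{\o}m's Theorem~6 and Brown~7.3.8. You can drop both the normalization step and the boundary-case worry: since $U$ is closed and $s\mapsto H(x,s)$ is continuous, $H(x,s)\in U$ for all $s\ge u(x)$ whenever $u(x)<1$, and $t>u(x)$ already forces $u(x)<1$, so the case $u(x)=1$ never arises---which is just as well, because your fallback of making the homotopies finish by time $\tfrac12$ cannot be arranged in general (a point whose path component misses $U$ never reaches $U$).
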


\begin{example}
\label{ex:LC}
The following closed poset complexes are link-cofibrant:

(1) The face system on a simplicial complex or CW-complex.

(2) Any subcomplex of a link-cofibrant poset complex.

(3) The product of two link-cofibrant poset complexes.

(4) The $\phi$-pushforward of a link-cofibrant poset complex.
    
\end{example}

\begin{proof}
    These are closed complexes by Example~\ref{ex:closed}. We verify the cofibration property.

    (1) Subcomplex inclusions of simplicial complexes and CW-complexes are cofibrations \cite[Section~7.3, Example~5]{Brown_2006} so in particular each $X^{\link(\alpha)} \to X^{\cone(\alpha)} = X^{\alpha}$ is a cofibration.

    (2) Immediate.

    (3) Let $X^P, Y^Q$ be link-cofibrant complexes. For $(p,q) \in P \times Q$ the inclusions
    \[
    \xymatrix{
    X^{\link(p)} \ar[r] & X^{\cone(p)}
    }
    \quad
    \text{and}
    \quad
    \xymatrix{
    Y^{\link(q)} \ar[r] & Y^{\cone(q)}
    }
    \]
    are closed cofibrations by hypothesis.
    Now
    \begin{align*}
    \link((p,q))
    &=
    \link(p) \times \cone(q)
    \cup
    \cone(p) \times \link(q)
    \\
    \cone((p,q))
    &=
    \cone(p) \times \cone(q)
    \end{align*}
    therefore
    \begin{align*}
    (X \times Y)^{\link((p,q))}
    &=
    X^{\link(p)} \times Y^{\cone(q)}
    \cup
    X^{\cone(p)} \times Y^{\link(q)}
    \\
    (X \times Y)^{\cone((p,q))}
    &=
    X^{\cone(p)} \times Y^{\cone(q)}
    \end{align*}
    so the inclusion
    \[
    \xymatrix{
    (X \times Y)^{\link((p,q))}
        \ar[r]
    &
    (X \times Y)^{\cone((p,q))}
    }
    \]
    is a cofibration by Proposition~\ref{prop:cf-product}.

    (4) Let $X^Q$ be the $\phi$-pushforward of a link-cofibrant complex $X^P$. Each $X^{\link(q)} \to X^{\cone(q)}$ is an inclusion of subcomplexes of $X^P$ and hence a cofibration by Proposition~\ref{prop:cf-subcomplexes}.
\end{proof}

\begin{remark}[tautological complex]
    \label{rem:LC-tautological-no}
    The tautological complex on a poset is not, in general, link-cofibrant. Let $\Lambda = (1 < \infty > 2)$ be the $\Lambda$-shaped three-element poset with largest element $\infty$ and two minimal elements $1,2$. The inclusion of $\Lambda^{\link(\infty)} = \{1, 2\}$ into $\Lambda^{\infty} = \Lambda$ is not a cofibration: if $f$ is a constant map from $\Lambda$ to a Hausdorff space, then any homotopy starting at at $f|_{\{1,2\}}$ that separates the two points cannot be extended to $\Lambda$.
\end{remark}

\begin{remark}[pullback]
\label{rem:LC-pullback-no}
Link-cofibrancy is {not} preserved by $f$-pullback.
Consider the two-element ordered set $I = (0 < 1)$. 
Define $[0,1]^I$ by $[0,1]^0 = \{0\}$ and $[0,1]^1 = [0,1]$.
Let $C \subseteq [0,1]$ be any closed subset whose inclusion is not a cofibration, such as the Cantor set, and define $X^I$ by $X^0 = C$ and $X^1 = [0,1]$. Let $f : [0,1] \to [0,1]$ be the distance function $f(t) = d(t,C)$. Then $[0,1]^I$ is link-cofibrant, whereas its $f$-pullback is $X^I$ which is not link-cofibrant.
\end{remark}

We finish with a simple fact for later use.

\begin{proposition}
\label{prop:ccce}
    Every topological space admits a closed cofibrant inclusion into a contractible space, which may be taken to be compact Hausdorff if the original space is.
\end{proposition}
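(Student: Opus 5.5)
The natural candidate is the cone construction. Given a space $U$, let $CU = (U \times [0,1]) / (U \times \{1\})$ be the cone on $U$, and embed $U$ as the base $U \times \{0\}$. I will verify three things in turn: $CU$ is contractible, $U \to CU$ is a closed cofibrant inclusion, and $CU$ is compact Hausdorff when $U$ is.

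\emph{Contractibility.} The map $H_s([u,t]) = [u, t + s(1-t)]$ is compatible with the quotient, since it preserves $t = 1$. Continuity on $CU \times [0,1]$ is the only point of care. The quotient map $q : U \times [0,1] \to CU$ gives a quotient map $q \times \id_{[0,1]}$, because $[0,1]$ is locally compact Hausdorff. Hence $H$ is continuous, and it contracts $CU$ to the cone point.

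\emph{Closed inclusion.} The restriction of $q$ to $U \times \{0\}$ is injective. It is a homeomorphism onto its image because $U \times [0,\tfrac12)$ is a saturated open set on which $q$ is injective, so $q$ restricts to a homeomorphism there. The image $q(U \times \{0\})$ is closed, since its preimage $U \times \{0\}$ is closed.

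\emph{Cofibration.} I will use the standard criterion that a closed inclusion $A \to X$ is a cofibration iff $X \times \{0\} \cup A \times [0,1]$ is a retract of $X \times [0,1]$ (Brown, Chapter~7). The inclusion $\{0\} \to [0,1]$ is a cofibration, so by Proposition~\ref{prop:cf-product} with $\emptyset \to U$, the inclusion $U \times \{0\} \to U \times [0,1]$ is a cofibration. This also follows directly from the retraction $[0,1]\times[0,1] \to [0,1]\times\{0\} \cup \{0\}\times[0,1]$.

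This cofibration must now be transported through the collapse of the top. I will do this concretely by defining a retraction $r$ of $U \times [0,1] \times [0,1]$ onto $U \times [0,1] \times \{0\} \cup U \times \{0\} \times [0,1]$ that acts only on the $(t,s)$ coordinates. Take $r(u,t,s) = (u, \rho(t,s))$, where $\rho$ is the standard radial retraction of the square onto the two edges $\{s=0\} \cup \{t=0\}$, chosen so that $\rho$ maps the edge $t=1$ into $\{t=1\} \cup \{s=0\}$. The radial projection from the point $(2,2)$ does this.

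I expect the main obstacle to be checking that $r$ descends to $CU \times [0,1]$. The difficulty is that $\rho(1,s)$ may have $t < 1$, which would break compatibility with the collapse. To avoid this, I will instead choose $\rho$ to fix all points with $t \geq \tfrac12$ in the $s=0$ direction. Concretely, apply the radial retraction only on $[0,\tfrac12] \times [0,1]$ and send points with $t \geq \tfrac12$ to $(t,0)$ after rescaling. The simpler alternative is to use the criterion via a Strøm structure, i.e.\ a function $\varphi : CU \to [0,1]$ together with a homotopy. Here $\varphi([u,t]) = \min(2t,1)$ with $\varphi^{-1}(0) = U$, and the homotopy $D_s([u,t]) = [u, t(1-s)]$ for $t \leq \tfrac12$, extended by the constant in $s$ on the region where $\varphi = 1$, suitably patched. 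Either route reduces to routine continuity checks on the quotient, again using that $q \times \id$ is a quotient map.

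\emph{Compact Hausdorff case.} If $U$ is compact Hausdorff, then $CU$ is the quotient of the compact Hausdorff space $U \times [0,1]$ by collapsing the closed set $U \times \{1\}$ to a point. Such a quotient is compact Hausdorff, since compact Hausdorff spaces are normal and collapsing a closed subset of a normal space yields a Hausdorff space. For $U = \emptyset$, take the target to be a point.
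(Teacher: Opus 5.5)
Your plan is the paper's: the paper's entire proof is a citation of Brown (\S7.3, Example~3) for the fact that the inclusion of a space into its cone is a closed cofibration. Your treatment of contractibility, the closed embedding, the compact Hausdorff case and $U=\emptyset$ is fine.

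The gap is the cofibration step. It is the only part with real content, and you do not complete it. For $r(u,t,s)=(u,\rho(t,s))$ to descend to $CU\times[0,1]$, the points $r(u,1,s)$ and $r(u',1,s)$ must be identified. So $\rho(1,s)$ must have $t$-coordinate $1$. Since $\rho$ takes values in $L=[0,1]\times\{0\}\cup\{0\}\times[0,1]$, this forces $\rho(1,s)=(1,0)$ for all $s$. The condition you wrote, $\rho(1,s)\in\{t=1\}\cup\{s=0\}$, is a different one, which is why $(2,2)$ fails.

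Neither of your repairs produces such a $\rho$.
\begin{itemize}
\item Suppose you use a rescaled radial retraction on $[0,\tfrac12]\times[0,1]$ and $(t,s)\mapsto(t,0)$ for $t\geq\tfrac12$. The two pieces disagree along $t=\tfrac12$: the rescaled projection from $(2,2)$ sends $(\tfrac12,s)$ to $\bigl(\tfrac{1-s}{2-s},0\bigr)$. Making them agree requires $\rho(\tfrac12,s)=(\tfrac12,0)$, which is the original constraint moved to another edge.
\item In the Str{\o}m route, $D_s([u,t])=[u,t(1-s)]$ equals $[u,\tfrac{1-s}{2}]$ at $t=\tfrac12$. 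So it cannot be continued by a homotopy that is constant in $s$, and `suitably patched' is exactly the missing step.
\end{itemize}

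Each route has a one-line fix.
\begin{itemize}
\item Project radially from $(1,2)$, the point directly above the edge $t=1$, instead of from $(2,2)$. Rays from $(1,2)$ through points of the square leave it through $L$, and points of $L$ are fixed. The ray through $(1,s)$ is vertical and lands at $(1,0)$. Your quotient argument with $q\times\id_{[0,1]}$ then gives a retraction of $CU\times[0,1]$ onto $CU\times\{0\}\cup U\times[0,1]$.
\item Alternatively, keep your $\varphi([u,t])=\min(2t,1)$ and take $D_s([u,t])=[u,\,t-s\min(t,1-t)]$. This fixes the base and the cone point, starts at the identity, and sends $\{t<\tfrac12\}$ into the base at $s=1$.
\item A formula-free option: by Proposition~\ref{prop:cf-product} with $\emptyset\to U$ and $\{0,1\}\to[0,1]$, the inclusion $U\times\{0,1\}\to U\times[0,1]$ is a closed cofibration. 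Cofibrations are stable under pushout. $CU$ is the pushout along $U\times\{0,1\}\to U\sqcup\{*\}$, $(u,0)\mapsto u$, $(u,1)\mapsto *$. Hence $U\to U\sqcup\{*\}\to CU$ is a composite of closed cofibrations.
\end{itemize}
The fact you did prove, that $U\times\{0\}\to U\times[0,1]$ is a cofibration, does not pass to $CU$ on its own, because cofibrations are not preserved by arbitrary quotients. The top $U\times\{1\}$ has to be part of the subspace being pushed out.
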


\begin{proof}
    The inclusion of a space into its cone is such a map \cite[Section 7.3, Example 3]{Brown_2006}.
\end{proof}

\subsection{Cellular systems}
\label{subsec:cellular}

Let $(X^p)$ be a poset system in a topological space $X$. We call it \emph{cellular} if the sets $X^p$ are contractible. This is not the usual use of the word, which implies Euclidean disks. Our cells can be terribly non-cell-like provided they are contractible. This suffices for our purposes because it suffices for the Extension Lemma~\ref{lem:extension}.

\begin{example}
\label{ex:cellular}
The following poset systems are cellular:

(1) The tautological system on a poset with the Alexandrov topology.

(2) The face system of a simplicial complex or regular CW-complex.

(3) Any subsystem of a cellular system.

(4) The product of two cellular systems.
\end{example}

\begin{proof}
    (1) Each cone is contractible in the Alexandrov topology. Indeed, the map
    \[
    H : \cone(p) \times [0,1] \longrightarrow \cone(p)
    \]
    defined by
    \[
    H(q,t) = 
    \begin{cases}
        q & \text{for $t=0$}
        \\
        p & \text{for $t>0$}
    \end{cases} 
    \]
    is continuous, because for any closed set $A \leq P$ the preimage
    \[
    H^{-1}(A) =
    \begin{cases}
            A \times \{0\}
            & \text{if $A \neq P$ (equivalently $p \not \in A$)}
            \\
            \cone(p) \times [0,1]
            & \text{if $A = P$}
    \end{cases}
    \]
    is closed. Thus $H$ is a homotopy from the identity map to the constant map~$p$.

    (2) Regular CW-complexes are normal \cite[Chapter~III, Theorem~2.1]{Lundell_Weingram_1969} so $X^\alpha = \overline{e_\alpha}$ for all $\alpha$. By regularity, each $\overline{e_\alpha}$ is a closed disk hence contractible.
    
    (3), (4) Immediate.
\end{proof}

Cellularity is {not} preserved by $f$-pullback or $\phi$-pushforward. 

\subsection{Attachment}
\label{subsec:gluing}

We can attach two poset complexes along a common subcomplex. We use this for the most general construction of CL-complexes (Theorem~\ref{thm:existence}).

\begin{proposition}
\label{prop:glued}
    Let $P$ be a poset, and suppose $P = A \cup B$ for closed sets $A, B \leq P$. Write $N = A \cap B$.
    Consider a poset system $(X^p)$ with $X^P = X$. Suppose $(X^a \mid a \in A)$ and $(X^b \mid b \in B)$ are separated and suppose $X^A \cap X^B = X^N$. Then $(X^p)$ is separated.
    
\end{proposition}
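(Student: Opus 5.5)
The plan is to verify Definition~\ref{def:separated} directly: for each $x \in X$, show that the indicator set $P_x$ has a smallest element. Since $X^P = X = X^A \cup X^B$, every $x$ lies in $X^A$ or in $X^B$. By symmetry it suffices to treat $x \in X^A$. Then the indicator set $A_x = P_x \cap A$ is nonempty. Because the subsystem $(X^a \mid a \in A)$ is separated, $A_x$ has a smallest element; call it $a_x$. I claim that $a_x$ is also the smallest element of the full indicator set $P_x$.

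To prove the claim, let $q \in P_x$ be arbitrary. If $q \in A$, then $q \in A_x$, so $a_x \leq q$ and we are done. Otherwise $q \in B$, since $P = A \cup B$, and then $x \in X^q \subseteq X^B$. This is the main step, and it is where the hypothesis $X^A \cap X^B = X^N$ enters. From $x \in X^A \cap X^B = X^N$ we get some $n \in N$ with $x \in X^n$.

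Now consider the separated $B$-system. Its indicator set $B_x$ contains both $q$ and $n$, so it has a smallest element $b_x$ with $b_x \leq q$ and $b_x \leq n$. Since $n \in N \subseteq A$ and $A$ is closed, we get $b_x \in A$. Also $x \in X^{b_x}$, so $b_x \in A_x$, and hence $a_x \leq b_x \leq q$. Thus $a_x \leq q$ for every $q \in P_x$, so $P_x$ has a smallest element and $(X^p)$ is separated.

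The argument needs only closedness of $A$; closedness of $B$ is used only in the symmetric case $x \in X^B$. The only real obstacle is the mixed case $q \in B - A$, which the intersection hypothesis handles: it lets us transport the minimal element of $B_x$ back into $A$ through $N$. Note that $a_x = b_x$ whenever $x \in X^N$, so the two local minima agree and no consistency issue arises.
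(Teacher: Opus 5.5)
Your proof is correct and follows essentially the paper's argument. Both use $X = X^A \cup X^B$, the separation of the two subsystems, and the hypothesis $X^A \cap X^B = X^N$ together with down-closedness to compare $\min A_x$ with $\min B_x$. The only difference is bookkeeping: the paper splits into three cases according to which of $A, B$ the set $P_x$ meets and identifies both minima with $\min(P_x \cap N)$, whereas you show directly that $\min A_x$ is a lower bound for all of $P_x$.
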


\begin{proof}
    Let $x \in X$. Since $X = X^P = X^A \cup X^B$ it follows that $P_x$ must meet at least one of $A,B$.
    If $P_x$ meets $A$ but not~$B$, then $P_x = P_x \cap A = A_x$ which has a smallest element. If $P_x$ meets $B$ but not~$A$, then $P_x = P_x \cap B = B_x$ which has a smallest element. Otherwise $P_x$ meets both $A$ and $B$, so $x \in X^A \cap X^B = X^N$ and therefore $P_x$ meets $N$.
    It follows that the smallest element of $P_x \cap A$ belongs to $N$, as does the smallest element of $P_x \cap B$, so both of these smallest elements must be the smallest element of $P_x \cap N$. Since $P = A \cup B$, this is then the smallest element of $P_x$.
    We have shown that every $P_x$ has a smallest element.
\end{proof}

\begin{corollary}[attachment]
\label{cor:gluing}
    Let $P, A, B, N$ be as above.
    Let $Y^A, Z^B$ be poset complexes. Suppose $Y^N, Z^N$ are isomorphic as $N$-complexes, with an isomorphism $f : Y^N \to Z^N$ specified. Then the quotient set
    \[
    X^P
    = Y^A \cup_{f} Z^B
    = \big( Y^A \sqcup Z^B \big)
    \big/ \big( y \sim f(y) \mid y \in Y^N \big)
    \]
    with the induced $P$-system is separated.
\end{corollary}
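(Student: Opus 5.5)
The plan is to reduce Corollary~\ref{cor:gluing} to Proposition~\ref{prop:glued}. First I make the induced $P$-system explicit. Let $\pi : Y \sqcup Z \to X$ be the quotient map. For $p \in P$, define $X^p$ to be the union of $\pi(Y^p)$ (when $p \in A$) and $\pi(Z^p)$ (when $p \in B$). This family is monotone, since $A$ and $B$ are closed and $(Y^a)$, $(Z^b)$ are monotone. It covers $X$, because $Y = Y^A$ and $Z = Z^B$ by separation.

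The key preliminary step is to show that $\pi|_Y$ and $\pi|_Z$ are injective, so that $Y$ and $Z$ may be identified with subsets of $X$. The relation $y \sim f(y)$ only pairs a point of $Y^N$ with a point of $Z^N$, and $f$ is a bijection. So the equivalence relation it generates has classes that are singletons or pairs $\{y, f(y)\}$, and no two distinct points of $Y$ (or of $Z$) become identified. Under these identifications we get $Y \cap Z = Y^N = Z^N$ inside $X$, which I will call $X^N$.

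Next I check the hypotheses of Proposition~\ref{prop:glued}.

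\emph{Restriction to $A$.} I claim $X^a = Y^a$ for $a \in A$. This is clear when $a \notin B$. When $a \in N$, we have $X^a = Y^a \cup f(Y^a)$, and $f(Y^a) = Z^a$ because $f$ is an isomorphism of $N$-complexes. Since $f(y)$ is identified with $y$, this union is just $Y^a$. Hence $(X^a \mid a \in A)$ is isomorphic to $(Y^a)$ and is separated. The same argument shows $(X^b \mid b \in B)$ is separated.

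\emph{Intersection condition.} We need $X^A \cap X^B = X^N$. Here $X^A = Y$ and $X^B = Z$, and $Y \cap Z = Y^N = X^N$ by the preliminary step.

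Proposition~\ref{prop:glued} then gives that $(X^p)$ is separated. The only delicate point is the bookkeeping for the identification at elements of $N$. Specifically, one must confirm that the equivalence relation creates no identifications beyond the pairs $\{y, f(y)\}$, and that the isomorphism property $f(Y^n) = Z^n$ makes the two contributions to $X^n$ coincide. Everything else is formal.
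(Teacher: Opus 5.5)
Your proposal is correct and follows the paper's proof. Both define the induced system by $X^a = Y^a$ for $a \in A$ and $X^b = Z^b$ for $b \in B$, use $f(Y^n) = Z^n$ to make the definition consistent on $N$, and invoke Proposition~\ref{prop:glued}. You also spell out points the paper leaves implicit: that the quotient map is injective on $Y$ and on $Z$, and that $X^A \cap X^B = X^N$. One small ordering issue: your monotonicity claim at elements of $N$ already relies on $f(Y^n) = Z^n$, which you only establish afterwards.
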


\begin{proof}
    This is Proposition~\ref{prop:glued} with the $P$-system defined as follows:
    \begin{alignat*}{2}
    X^a &= Y^a &\quad &\text{for $a \in A$}
    \\
    X^b &= Z^b &\quad &\text{for $b \in B$}    
    \end{alignat*}
    The definition is consistent because $Y^n = Z^n$ for $n \in N$.
\end{proof}

\begin{proposition}[topological attachment]
\label{prop:gluing-topological}
	In the situation of Corollary~\ref{cor:gluing}, suppose that $Y^A, Z^B$ are topological poset complexes, with $Y^N, Z^N$ closed subspaces, and the isomorphism $f : Y^N \to Z^N$ a homeomorphism. Assign to $X^P = Y^A \cup_f Z^B$ the quotient topology; that is, its topology as an adjunction space.
    Then each of the following adjectives applies to~$X^P$ if it applies to both $Y^A$ and $Z^B$.
    \[
    \text{
    {\rm(1)} closed
    \;
    {\rm(2)} link-cofibrant
    \;
    {\rm(3)} cellular
    \;
    {\rm(4)} compact
    \;
    {\rm(5)} Hausdorff
    }
    \]
\end{proposition}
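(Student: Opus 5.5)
The plan is to first record the basic topology of the adjunction space, and then check each adjective in turn, reducing it to a statement about a single closed cell.

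Let $q : Y \sqcup Z \to X$ be the quotient map. Since $Y^N \subseteq Y$ and $Z^N \subseteq Z$ are closed and $f$ is a homeomorphism, the standard facts about adjunction spaces give the following:
\begin{itemize}
\item the restrictions $q|_Y$ and $q|_Z$ are closed embeddings, with images $X^A$ and $X^B$;
\item $X^A \cap X^B = X^N$;
\item a subset $F \subseteq X$ is closed iff $q^{-1}(F) \cap Y$ and $q^{-1}(F)\cap Z$ are closed.
\end{itemize}
To see that $q|_Y$ is closed, take $F \subseteq Y$ closed. Then $q^{-1}(q(F))$ meets $Z$ in $f(F \cap Y^N)$, which is closed in $Z^N$ and hence in $Z$. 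The argument for $q|_Z$ is symmetric. Separation of $X^P$ is already Corollary~\ref{cor:gluing}, so the adjectives are all that remain.

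\textbf{(1) Closed.} For $p \in A$ the set $X^p = q(Y^p)$ is the image of a closed set under a closed embedding, so it is closed. The case $p \in B$ is symmetric. This handles every $p$, since $P = A \cup B$. For general $C \leq P$, the set $X^C = X^{C\cap A} \cup X^{C \cap B}$ is a union of two closed sets, each the image of a closed subset under a closed embedding, so it is closed.

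\textbf{(3) Cellular.} Each $X^p$ is homeomorphic to $Y^p$ or $Z^p$, hence contractible.

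\textbf{(2) Link-cofibrant.} Fix $p$, and suppose without loss of generality that $p \in A$. Then $\cone(p) \subseteq A$ because $A$ is closed. The inclusion $X^{\link(p)} \to X^p$ therefore lies entirely inside $X^A$, and under the homeomorphism $q|_Y : Y \to X^A$ it is identified with $Y^{\link(p)} \to Y^p$. That inclusion is a cofibration by hypothesis. Being a cofibration depends only on the pair of spaces, since homotopy extension is intrinsic to the inclusion, so it is preserved under homeomorphism. The point that makes this work is that each pair $(X^p, X^{\link(p)})$ is internal to one piece; the gluing never has to be confronted here.

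\textbf{(4) Compact.} $X$ is the image of the compact space $Y \sqcup Z$ under $q$.

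\textbf{(5) Hausdorff.} This is the main obstacle, because adjunction spaces of Hausdorff spaces need not be Hausdorff in general. Here, however, $X = X^A \cup X^B$ is a union of two closed subspaces, each compact Hausdorff: the hypothesis in (4) supplies compactness of $Y$ and $Z$, and I expect the intended reading of (5) is together with compactness. Under compactness the argument goes as follows. The map $q$ is a closed map, because for closed $F$ the set $q^{-1}(q(F))$ is $F$ together with the image of $F$ under $f$ or $f^{-1}$ on the closed sets $Y^N$ and $Z^N$, and that image is compact, hence closed. The equivalence relation $R \subseteq (Y \sqcup Z)^2$ is the union of the diagonal with the graphs of $f$ and $f^{-1}$. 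These graphs are compact, hence closed, and the diagonal is closed by Hausdorffness. A quotient of a compact Hausdorff space by a closed equivalence relation is Hausdorff, which gives (5).

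Alternatively, one can separate points directly. Two points in $X^A - X^N$ and $X^B - X^N$ are separated by the open sets $X - X^B$ and $X - X^A$. Two points inside the same piece, say $X^A$, are handled by separating them in $X^A$ with closed neighbourhoods. Those neighbourhoods are compact, so their traces on $X^N$ are compact and hence closed in $X^B$. One then enlarges them into $X^B$ using normality of the compact Hausdorff space $X^B$.
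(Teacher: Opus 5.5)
Your treatment of (1)--(4) is correct and is essentially the paper's argument. The canonical maps $Y \to X$ and $Z \to X$ are closed embeddings, and each pair $(X^p, X^{\link(p)})$ lives inside a single piece. For (2), note also that closedness of $X^P$, which is part of the definition of link-cofibrant, comes from (1).

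The gap is in (5). The statement says Hausdorffness is inherited \emph{by itself}. You prove it only under the extra assumption that $Y$ and $Z$ are compact, but the hypothesis of clause (4) is not available in clause (5). Indeed Theorem~\ref{thm:existence} uses the clauses separately: ``compact or Hausdorff or both''.

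The obstacle you anticipate does not arise here. Non-Hausdorff adjunction spaces of Hausdorff spaces come from attaching maps that collapse a closed set. Here $f$ is a homeomorphism between closed subspaces, so $X$ is just a union of two closed subspaces $X^A \cong Y$ and $X^B \cong Z$, and such a space is always Hausdorff.

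The missing observation is this: if $F \subseteq X$ is closed and $U$ is open in $F$, then $U \cup (X - F)$ is open in $X$, because its complement $F - U$ is closed in $F$ and hence in $X$. Now take $x \neq y$ and choose disjoint open sets $U_A, V_A$ of $X^A$ as follows. If both points lie in $X^A$, separate them using Hausdorffness of $X^A$. Otherwise let $U_A$ be $X^A$ or $\emptyset$ according as $x \in X^A$ or not, and $V_A$ likewise for $y$. Choose $U_B, V_B$ in $X^B$ the same way. Then
\[
U = \big(U_A \cup (X - X^A)\big) \cap \big(U_B \cup (X - X^B)\big),
\qquad
V = \big(V_A \cup (X - X^A)\big) \cap \big(V_B \cup (X - X^B)\big)
\]
are open neighbourhoods of $x$ and $y$. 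Moreover $U \cap V \cap X^A \subseteq U_A \cap V_A = \emptyset$, and similarly for $X^B$, so $U \cap V = \emptyset$. No compactness or normality is needed.
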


\begin{proof}
    Since $Y^N \to Y^A$ and $Z^N \to Z^B$ are closed subspace inclusions, it follows that the canonical maps $Y^A \to X^P$ and $Z^B \to X^P$ are closed subspace inclusions.
    Any topological property in $X^P$ that is localized to $X^A$ or $X^B$ can equivalently be evaluated in $Y^A$ or $Z^B$.

    (1) For $C \subseteq P$ we have $X^C = X^{C \cap A} \cup X^{C \cap B} = Y^{C \cap A} \cup Z^{C \cap B}$, a union of two closed sets.  
    
    (2), (3) Immediate, since each $X^p$ is contained in at least one of $Y^A$ or $Z^B$.

    (4), (5) These are standard inherited properties of attachment spaces of this type.
\end{proof}

\section{The Carrier Lemma}
\label{sec:carrier-lemma}

We now have the ingredients for the generalized Carrier Lemma. We begin, in Section~\ref{subsec:poset-homotopy}, with a simple observation about homotopies of maps between spaces with poset systems. The new Carrier Lemma is presented in Section~\ref{subsec:carrier-lemma}. The preceding observation about homotopies is used to organize the proof of the second statement in the lemma. This asserts that any two continuous maps provided by the first statement in the lemma, in a given situation, must be homotopic to each other. In Section~\ref{subsec:contractibility}, we prove the stronger result that the space of all such maps is contractible (under favourable topological conditions).

\subsection{Poset-system homotopy}
\label{subsec:poset-homotopy}

Let $(X^p), (Y^q)$ be poset systems in topological spaces $X,Y$ and let $\phi : P \to Q$ be a poset map.

A $\phi$-homotopy between continuous $\phi$-maps $f,g : X \to Y$ is a homotopy for which each intermediate map is a $\phi$-map. This is the same thing as requiring the homotopy
\[
H : X \times [0,1] \longrightarrow Y
\]
to be a $\phi$-map with respect to the $P$-system
$(X^p \times [0,1])$
on the domain.
%

Alternatively, we can identify $[0,1]$ with the standard geometric $1$-simplex $|\Delta_1|$ regarded as a poset complex over the abstract 1-simplex $\Delta_1 = \{\{0\}, \{1\}, \{0,1\}\}$.
Defining
\[
\Phi
=
\phi \circ \proj_P
:
P \times \Delta_1
\longrightarrow
Q
\]
it follows that a $\phi$-homotopy is then the same thing as a $\Phi$-map
\[
H :
X \times |\Delta_1|
\longrightarrow Y
\]
with respect to the product poset system $(X^p \times |\Delta_1|^\sigma)$ in the domain.
The advantage of this second formulation is that the product structure provides additional scaffolding to build such maps, as we shall see.

\subsection{The Carrier Lemma}
\label{subsec:carrier-lemma}

The original Carrier Lemma, due to Walker~\cite{Walker_1981} building on Lakser~\cite{Lakser_1971}, provides a homotopy analogue to Eilenberg and Mac Lane's method of acyclic models in homological algebra~\cite{Eilenberg_MacLane_1953}. It gives a protocol for constructing a continuous map from a simplicial complex to a space covered by a collection of contractible subsets. Here we replace simplicial complexes by link-cofibrant poset complexes, and formulate the result in terms of extending a partially defined map (which turns out to be immediately useful, including in the proof itself). Our limitation compared to~\cite{Walker_1981} is the restriction to finite posets. 

\begin{theorem}[Carrier Lemma]
\label{thm:carrier}
Assume the following hypotheses:

    \begin{itemize}
        \item
        Let $\phi : P \to Q$ be a poset map, with $P$ finite.

        \medskip
        \item
        Let $X^P$ be a link-cofibrant $P$-complex on a topological space $X$.

        \medskip
        \item
        Let $(Y^q)$ be a cellular $Q$-system in a topological space~$Y$.

    \end{itemize}
   
    Given a closed subset $A \leq P$ (possibly empty) and a continuous $\phi|_A$-map $f_A : X^A \to Y$:

    \begin{enumerate}
        \item
        There exists a continuous $\phi$-map $f : X \to Y$ extending $f_A$.

        \medskip
        \item 
        Any two such extensions are $\phi$-homotopic relative to $X^A$.

    \end{enumerate}
\end{theorem}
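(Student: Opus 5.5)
Part (1) is proved by induction on the number of elements of $P - A$, adding one element at a time. If $A = P$ there is nothing to do. Otherwise choose a minimal element $p \in P - A$ and set $B = A \cup \{p\}$. Then $B \leq P$, and as in the proof of Proposition~\ref{prop:cf-subcomplexes} we have $\link(p) \subseteq A$ and
\[
X^A \cap X^p = X^{A \cap \cone(p)} = X^{\link(p)},
\]
using Proposition~\ref{prop:bounded-lattice-map} (here separation is essential). The restriction of $f_A$ to $X^{\link(p)}$ lands in $Y^{\phi(p)}$: each $a \in \link(p)$ satisfies $f_A(X^a) \subseteq Y^{\phi(a)} \subseteq Y^{\phi(p)}$ by monotonicity.

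We then need an extension lemma. If $U \to V$ is a cofibration and $Z$ is contractible, then every continuous map $U \to Z$ extends to $V$. To see this, note that the map is homotopic to a constant map, and a constant map extends trivially. Running the homotopy extension property backwards gives an extension of the original map. Concretely, apply homotopy extension to the reversed homotopy, starting from the constant map on $V$.

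Applying this with $U = X^{\link(p)}$, $V = X^p$ and $Z = Y^{\phi(p)}$ (contractible by cellularity) gives $g : X^p \to Y^{\phi(p)}$ agreeing with $f_A$ on $X^{\link(p)}$. Since $X^A$ and $X^p$ are closed (the complex is closed) and cover $X^B$, the pasting lemma glues $f_A$ and $g$ to a continuous $f_B : X^B \to Y$. This $f_B$ is a $\phi|_B$-map. For $b \in A$ this is inherited from $f_A$. For $b = p$, the set $X^p$ is mapped by $g$ into $Y^{\phi(p)}$. Finiteness of $P$ ensures the induction terminates, and $X^P = X$ because the system is separated.

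Part (2) reduces to part (1) applied to the product complex, as set up in Section~\ref{subsec:poset-homotopy}. Consider $X \times |\Delta_1|$ as a $(P \times \Delta_1)$-complex. It is link-cofibrant by Example~\ref{ex:LC}(1) and~(3), and $P \times \Delta_1$ is finite. Take $\Phi = \phi \circ \proj_P$ and the closed subset
\[
C = (A \times \Delta_1) \cup \big(P \times \{\{0\},\{1\}\}\big) \leq P \times \Delta_1 .
\]
Then $X^C = (X^A \times [0,1]) \cup (X \times \{0,1\})$. Given two extensions $f, f'$, define $H_C$ on $X^C$ by $f$ on $X \times \{0\}$, by $f'$ on $X \times \{1\}$, and by $f_A \circ \proj$ on $X^A \times [0,1]$. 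These agree on overlaps because $f, f'$ extend $f_A$. The pieces are closed, so $H_C$ is continuous, and one checks directly that it is a $\Phi|_C$-map. Part (1) extends $H_C$ to a $\Phi$-map $H : X \times [0,1] \to Y$, which is a $\phi$-homotopy from $f$ to $f'$ relative to $X^A$.

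The main obstacle is the extension step, in particular getting the homotopy-extension argument into the right direction. A secondary check is that $X^A \cap X^p$ really equals $X^{\link(p)}$, which is exactly where separation is used. The remaining verifications, namely closedness for pasting and the $\Phi|_C$-map condition, are routine.
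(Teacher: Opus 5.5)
Your proposal is correct and follows the paper's proof essentially step by step. Part (1) extends cell by cell over a minimal $p \in P - A$, using $X^A \cap X^p = X^{\link(p)}$ and the same homotopy-extension argument (the paper's Extension Lemma~\ref{lem:extension}); part (2) applies part (1) to the product complex $X^P \times |\Delta_1|^{\Delta_1}$ with the same closed subset and $\Phi = \phi \circ \proj_P$, the only cosmetic difference being that you paste the initial map over closed pieces whereas the paper pastes over the two overlapping open pieces $t \in [0,1)$ and $t \in (0,1]$.
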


\begin{remark}
    The Carrier Lemma of Walker~\cite{Walker_1981} (as stated in Section~\ref{subsec:motivation}) is recovered by setting $P = Q = K$ (the simplicial complex~$K$ viewed as a poset), $\phi = \id_K$, $X^P = |K|^K$ (the face complex) and $A = \emptyset$.
\end{remark}

Here is the inductive step in the proof of Theorem~\ref{thm:carrier}.

\begin{lemma}[Extension Lemma]
\label{lem:extension}
Let $U$ be a subspace of a topological space~$V$ such that the inclusion map $U \to V$ is a cofibration. Let $W$ be a contractible topological space.
Then any continuous map $f : U \to W$ can be extended to a continuous map $g : V \to W$.
\[
    \xymatrix{
    V
	   \ar@{.>}[r]^-{g}
    &
    W
    \\
    U
    	\ar[ur]_f
    	\ar[u]
    }
\]
\end{lemma}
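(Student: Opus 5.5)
The plan is to use contractibility of $W$ to null-homotope $f$, and then let the homotopy extension property carry that null-homotopy from $U$ over to all of $V$. Since $W$ is contractible, there is a point $w_0 \in W$ and a homotopy $K : W \times [0,1] \to W$ with $K(w,0) = w_0$ and $K(w,1) = w$ for all $w \in W$. I orient it to run from the constant map to the identity on purpose: the homotopy extension property, as recalled in Section~\ref{subsec:link-cofibrant}, extends homotopies that \emph{start} at the restriction of a given map, so I want the homotopy on $U$ to start at something that obviously extends to~$V$.

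First, compose to get $h = K \circ (f \times \id) : U \times [0,1] \to W$, that is, $h(u,t) = K(f(u),t)$. This is continuous, and we have $h(\cdot,0) \equiv w_0$ and $h(\cdot,1) = f$.

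Second, let $c : V \to W$ be the constant map with value $w_0$. Then $c|_U = h(\cdot,0)$, so $h$ is a homotopy on $U$ starting at $c|_U$. Because $U \to V$ is a cofibration, $h$ extends to a homotopy $H : V \times [0,1] \to W$ with $H(\cdot,0) = c$ and $H|_{U \times [0,1]} = h$.

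Third, define $g = H(\cdot,1) : V \to W$. It is continuous, and for $u \in U$ we get $g(u) = h(u,1) = f(u)$, so $g$ extends $f$.

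No step is a real obstacle. The only point needing care is the direction of the contraction, so that it matches the form of the homotopy extension property as stated (extending homotopies that begin at $f|_U$). Alternatively, one could contract in the usual direction and reverse time, since reparametrizing a homotopy is harmless. No closedness of $U$ is needed, and no point-set hypotheses on $W$ are needed beyond contractibility.
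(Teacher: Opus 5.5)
Your proposal is correct and follows essentially the same route as the paper: null-homotope $f$ using contractibility of $W$, apply the homotopy extension property to the homotopy starting at the constant map, and take its endpoint as $g$. Your explicit attention to orienting the contraction from the constant map to the identity is a small but genuine improvement in precision over the paper's brief wording.
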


\begin{proof}
Since $W$ is contractible, the map $f$ is homotopic to a map $c_U : U \to W$ that takes a constant value~$c$. Extend this trivially to the constant map $c_V: V \to Y$. Use the homotopy extension property to extend the homotopy $c_U \simeq f$ on~$U$ to a homotopy $c_V \simeq g$ on~$V$. The map $g$ is the required extension.
\end{proof}

\begin{proof}[Proof of Theorem~\ref{thm:carrier} (Carrier Lemma)]
    (1) We construct an extension cell by cell. We first extend from $A$ to $A \cup \{p\}$ where $p$ is a minimal element of $P - A$.
    The space $X^{A \cup \{p\}}$ is the union $X^A \cup X^p$ of two closed subspaces,
    and $f$ is already defined on $X^A$, so it remains to define $f$ on $X^p$ so that it agrees on the intersection, which is
    %
    \[
    X^A \cap X^p
    = X^A \cap X^{\cone(p)}
    = X^{A \cap \cone(p)}
    = X^{\link(p)}
    \]
    by separation and the minimality of~$p$.
    The task is reduced to following extension problem:
    \[
    \xymatrix{
    X^p
	   \ar@{.>}[r]
    &
    Y^{\phi(p)}
    \\
    X^{\link(p)}
    	\ar[ur]
    	\ar[u]
    }
    \]
    Since $X^{\link(p)} \to X^p$ is a cofibration and $Y^{\phi(p)}$ is contractible, a continuous extension is provided by Extension Lemma~\ref{lem:extension}. 
    We extend to all of $X^P$ by repeating this process finitely many times.
    
    (2) This is a special case of (1) in the following way. Let $f,g$ be the two extensions. Write
    \begin{alignat*}{2}
        &R &&= P \times [0,1]
        \\
        &Z^R &&= X^P \times [0,1]^{\Delta_1}
        \\
        &B
        &&=
        (P \times \partial \Delta_1)
        \;\cup\;
        (A \times \Delta_1)
    \end{alignat*}
    and define $\Phi : R \to Q$ by
    $
    \Phi(p,\sigma) = \phi(p)
    $.
    Here $Z^R$ is link-cofibrant, being a product of link-cofibrant poset complexes.
    Define $f_{B}: Z^B \to Y$ by the following piecewise formula on two overlapping open subsets:
    \[
    f_{B}(x,t)
    =
    \begin{cases}
        f(x) & \text{for $t \in [0,1)$}
        \\
        g(x) & \text{for $t \in (0,1]$}
    \end{cases}
    \]
    This is consistent on the intersection $X^A \times (0,1)$ because $f(x) = g(x) = f_A(x)$ on $X^A$, so we get a continuous map $f_B : Z^B \to Y$ which is a $\Phi|_{B}$-map because $f, g$ are $\phi$-maps.
    The extension to a ${\Phi}$-map $Z^R \to Y$ provided by (1) is the homotopy we seek.    
\end{proof}

\begin{remark}
    In the original Carrier Lemma~\cite{Walker_1981} where the domain is a simplicial complex, the proof of (2) is very similar to the proof of (1) but doesn't quite reduce to it, so the steps have to be written out again. Our framework has products, and the lemma is formulated in terms of extending a partially defined map, so we get (2) directly from (1). 
\end{remark}

\subsection{Contractibility of Mapping Spaces}
\label{subsec:contractibility}

Theorem~\ref{thm:carrier} can be interpreted heuristically as the claim that the space of continuous $\phi$-maps extending $f_A$ is (1) non-empty, and (2) path-connected. The next theorem makes this precise and strengthens the claim.

\begin{theorem}[mapping-space contractibility]
\label{thm:contractibility}
    Under the hypotheses of Theorem~\ref{thm:carrier}, suppose further that $X$ is locally compact.\footnote{%
        We take `locally compact' to mean that every point has a basis of compact neighbourhoods.
    }
    Endow the mapping space
    \[
    \map(X,Y)
    =
    \{ \text{continuous maps $X \to Y$} \}
    \]
    with the compact-open topology.
    Then the subspaces\footnote{%
    This notation suppresses the poset systems $(X^p), (Y^q)$ to reduce clutter.
    }
    \begin{align*}
        \map_{\phi}(X,Y)
        &=
        \{ \text{continuous $\phi$-maps $X \to Y$} \}
        \\
        \map_\phi(X,Y; f_A)
        &=
        \{ \text{continuous $\phi$-maps $X \to Y$ extending $f_A$} \}
    \end{align*}
    are contractible. 
\end{theorem}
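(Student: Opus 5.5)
The plan is to prove contractibility by the standard device of using the Carrier Lemma with a parameter space. Concretely, if $M = \map_\phi(X,Y;f_A)$ is nonempty (Theorem~\ref{thm:carrier}(1)), I would fix a basepoint $g_0 \in M$ and try to construct a contraction $H : M \times [0,1] \to M$ with $H(g,0)=g$ and $H(g,1)=g_0$. By the exponential law, which is valid because $X$ and $X\times[0,1]$ are locally compact, such a contraction corresponds to a continuous map
\[
\tilde H : M \times [0,1] \times X \longrightarrow Y
\]
which is a $\phi$-map in the $X$-variable for each fixed $(g,t)$ and extends $f_A$ on $M\times[0,1]\times X^A$. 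Continuity of the adjoint map $M\times[0,1]\to\map(X,Y)$ then follows from local compactness. So the task is to construct $\tilde H$ using the Carrier Lemma applied to a parametrized complex.

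Step one: I would set up a poset complex over $P\times\Delta_1$ as in the proof of Theorem~\ref{thm:carrier}(2), but multiplied by the parameter space $M$. Give $M\times X\times[0,1]$ the system $(M\times X^p\times|\Delta_1|^\sigma)$. Here $M$ is treated as a trivial factor, namely a one-point poset system $M^{\{*\}}=M$. That trivial factor is link-cofibrant, since its link is empty and $\emptyset\to M$ is a cofibration. So by Example~\ref{ex:LC}(3) the product is link-cofibrant over $P\times\Delta_1$, with no hypotheses on $M$ needed. Step two: on the closed subcomplex $B=(P\times\partial\Delta_1)\cup(A\times\Delta_1)$, define the partial map. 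It is the evaluation $(g,x,0)\mapsto g(x)$ at $t=0$, the map $(g,x,1)\mapsto g_0(x)$ at $t=1$, and $f_A(x)$ on $M\times X^A\times[0,1]$. These pieces are consistent, and the evaluation piece is continuous because $X$ is locally compact. The piecewise definition is glued on the overlapping open sets $t<1$ and $t>0$ exactly as in the proof of Theorem~\ref{thm:carrier}(2), and it is a $\Phi|_B$-map. Step three: Theorem~\ref{thm:carrier}(1) gives a $\Phi$-extension $\tilde H$. The $\Phi$-map condition says that each slice $\tilde H(g,-,t)$ is a $\phi$-map, and the relative condition says it extends $f_A$, so $H$ lands in $M$ as required.

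For the unrestricted space $\map_\phi(X,Y)$, this is the case $A=\emptyset$, so the same argument applies verbatim.

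The main obstacle is the continuity bookkeeping in step two and the final adjunction. Evaluation $\map(X,Y)\times X\to Y$ is continuous when $X$ is locally compact, which is exactly where the extra hypothesis enters. The map $M\times[0,1]\to\map(X,Y)$ is continuous whenever $\tilde H$ is, and this direction needs no hypothesis. I should also double-check that the trivial-factor product really is a closed link-cofibrant complex in the sense of the definitions: it is closed by Example~\ref{ex:closed}(4) and link-cofibrant by Example~\ref{ex:LC}(3), with Str{\o}m's theorem applying to the closed cofibration $\emptyset\to M$. If that fails for some subtle reason, I would instead use the tautological one-point complex on $M$ directly in the proof of Proposition~\ref{prop:cf-product}, since multiplying a cofibration by a space $M$ preserves the cofibration property.
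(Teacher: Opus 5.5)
Your proof is correct and follows essentially the paper's argument: you use continuity of evaluation (from local compactness of $X$) to convert a family of $\phi$-maps into a single map on a product, make that product link-cofibrant with the parameter space as an unstructured factor, and apply Theorem~\ref{thm:carrier}(1) relative to a subcomplex. The only difference is cosmetic. The paper proves a general extension lemma for maps into $\map_\phi(X,Y;f_A)$ from a closed cofibrant subspace of an arbitrary space, and applies it to the base of the cone on $\map_\phi(X,Y;f_A)$; you instead work on the cylinder with both ends prescribed and a basepoint supplied by Theorem~\ref{thm:carrier}(1), which amounts to a parametrized version of the proof of Theorem~\ref{thm:carrier}(2).
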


We prove the second of these assertions, the first being the special case $A = \emptyset$.

\begin{lemma}
    \label{lem:contractibility}
    Let $M$ be a topological space and $L \subseteq M$ be a closed subspace whose inclusion map is a cofibration. In the situation of Theorem~\ref{thm:contractibility}, any continuous map $g : L \to \map_\phi(X,Y; f_A)$ can be extended to a continuous map $M \to \map_\phi(X,Y; f_A)$:
    \[
    \xymatrix{
    M
        \ar@{.>}[dr]^{\exists}
    \\
    L
        \ar[u]
        \ar[r]_-g
    &
    \map_\phi(X,Y; f_A)
    }
    \]
\end{lemma}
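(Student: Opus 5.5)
We will use the exponential law to turn the lifting problem into an extension problem on $M \times X$, and then solve that with the Carrier Lemma (Theorem~\ref{thm:carrier}(1)). Since $X$ is locally compact, continuous maps $M \to \map(X,Y)$ correspond bijectively to continuous maps $M \times X \to Y$, and likewise for maps $L \to \map(X,Y)$ and $L \times X \to Y$. A map $M \to \map(X,Y)$ lands in $\map_\phi(X,Y;f_A)$ iff the adjoint $F : M \times X \to Y$ satisfies two conditions: $F(M \times X^p) \subseteq Y^{\phi(p)}$ for all $p$, and $F(m,x) = f_A(x)$ for $x \in X^A$. The subspace topology on $\map_\phi(X,Y;f_A)$ agrees with the topology induced from $\map(X,Y)$, so continuity into the subspace is the same as continuity into $\map(X,Y)$. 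The problem is therefore to extend a given continuous map on $(L \times X) \cup (M \times X^A)$ to a continuous $\Phi$-map on $M \times X$.

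To frame this as a Carrier Lemma problem, let $I = (0<1)$ and give $M$ the link-cofibrant $I$-complex structure $M^0 = L$, $M^1 = M$. It is closed since $L$ is closed, it is separated, and it is link-cofibrant because $\link(1) = \{0\}$ and $L \to M$ is a cofibration. By Example~\ref{ex:LC}(3), $M^I \times X^P$ is a link-cofibrant $(I \times P)$-complex. Define $\Phi = \phi \circ \proj_P : I \times P \to Q$. Let $B = (\{0\} \times P) \cup (I \times A)$, a closed subset of $I \times P$, so that $(M \times X)^B = (L \times X) \cup (M \times X^A)$.

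Define $F_B$ piecewise: it is the adjoint of $g$ on $L \times X$, and $(m,x) \mapsto f_A(x)$ on $M \times X^A$. These agree on the overlap $L \times X^A$ because every $g(l)$ extends $f_A$. Both pieces are closed: $X^A$ is closed in $X$ since the complex is closed, and $L$ is closed in $M$. So the pasting lemma makes $F_B$ continuous. It is a $\Phi|_B$-map because each $g(l)$ is a $\phi$-map and $f_A$ is a $\phi|_A$-map. The hypothesis that $(Y^q)$ is cellular is unchanged, and $I \times P$ is finite. Theorem~\ref{thm:carrier}(1) then yields a continuous $\Phi$-map $F : M \times X \to Y$ extending $F_B$. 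Its adjoint $M \to \map(X,Y)$ is continuous, extends $g$, and lands in $\map_\phi(X,Y;f_A)$ by the characterization in the first paragraph.

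The main obstacle is the first step, getting the exponential law correct with only local compactness of $X$ and no hypotheses on $M$, $L$ or $Y$. The direction we need, that a continuous $F : M \times X \to Y$ has a continuous adjoint, holds for arbitrary $M$. The other direction, that the adjoint of $g$ is continuous on $L \times X$, is where local compactness of $X$ is needed: it makes the evaluation map $\map(X,Y) \times X \to Y$ continuous. Everything else is bookkeeping. As a follow-up, contractibility of $\map_\phi(X,Y;f_A)$ comes from applying the lemma to $L = \{0\} \subset M = [0,1]$ to get path-connectedness, and more generally to the cone inclusions $S \to \cone(S)$ (Proposition~\ref{prop:ccce}), which show the space is contractible.
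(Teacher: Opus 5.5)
Your proof is correct and follows the paper's argument essentially step for step: you use the exponential adjunction, the $I$-complex $M^0 = L$, $M^1 = M$, the closed set $B = (\{0\}\times P)\cup(I\times A)$ with $\Phi = \phi\circ\proj_P$, the piecewise map $F_B$ on two closed pieces, and Theorem~\ref{thm:carrier}(1), and you are in fact more careful than the paper about which half of the adjunction needs local compactness. One small slip in your closing aside, outside the lemma itself: path-connectedness needs $L = \{0,1\} \subset [0,1]$, not $L = \{0\}$, although the cone argument you then mention, with $L = \map_\phi(X,Y;f_A)$ and $g = \id$, is exactly how the paper deduces contractibility.
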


\begin{proof}
Recall that the exponential adjunction map for topological spaces
\[
\xymatrix{
\map(W \times X, Y)
    \ar[r]
&
\map(W, \map(X,Y))
}
\]
is a bijection
when $X$ is locally compact (\cite[Chapter 7, Corollary 5.4]{Munkres_1984}; stated for locally compact Hausdorff but the proof does not use Hausdorff).
This allows us to recast the extension problem in the following form
\[
    \xymatrix{
    M \times X
        \ar@{.>}[dr]^{\exists}
    \\
    L \times X
        \ar[u]
        \ar[r]_-{G}
    &
    Y
    }
\]
with appropriate restrictions on the maps to~$Y$. Here $G(\ell,x) = [ g(\ell) ](x)$.

Let $I = (0 < 1)$ be the two-element ordered set. Then $M^0 = L$, $M^1 = M$ defines $M^I$ as a link-cofibrant $I$-complex and the product $M^I \times X^P$ as a link-cofibrant $(I \times P)$-complex.

To construct the desired extension, we apply the Carrier Lemma with the following set-up.
Let $B = \{0\} \times P \;\cup\; I \times A  \leq I \times A$.
Let $\Phi = \phi \circ \proj_P : I \times P \to Q$.
Let $F_B : (M \times X)^B \to Y^Q$ be the continuous $\Phi|_B$-map defined piecewise on two closed subspaces as follows:
\[
F(m,x)
=
\begin{cases}
    G(m,x) &\text{if $m \in L$}
    \\
    f_A(x) &\text{if $x \in X^A$}
\end{cases}
\]
Any extension to a continuous $\Phi$-map $F : M^I \times X^P \to Y^Q$ has the properties we seek.
\end{proof}

\begin{proof}[Proof of Theorem~\ref{thm:contractibility}]
    Apply Lemma~\ref{lem:contractibility} with $L = \map_\phi(X,Y; f_A)$, with $g$ the identity map, and with $M$ the cone on $L$ (compare Proposition~\ref{prop:ccce}).
\end{proof}

This short proof works because we have imposed no restrictions on the underlying space of a link-cofibrant poset complex. In particular, it can be a rather complicated mapping space.

\begin{remark}
\label{rem:k-spaces}
    Local compactness in Theorem~\ref{thm:contractibility} is used to ensure that the exponential adjunction map is a bijection. A natural alternative is to work in the category of k-spaces (compactly generated spaces). One complication is that subspaces of k-spaces are not necessarily k-spaces. I leave it to the reader to investigate this approach.
\end{remark}

\section{CL-complexes}
\label{sec:CL-complexes}

We have reached the main objects of study.
Section~\ref{subsec:CL} defines CL-complexes and records their basic formal properties. Illustrative examples of brisk CL-complex arguments are given in Section~\ref{subsec:short-proofs}.
The existence of CL-complexes is held back to Section~\ref{subsec:CL-constructions}, which offers constructions both specific and general, beginning with the CL-complex structure on the order complex of a finite poset. Section~\ref{subsec:CL-further} gives examples of `motivated construction': what is the `simplest' way to realize a particular kind of poset as a CL-complex? Finally, the overall situation is laid out in categorical terms in Section~\ref{subsec:equivalence}.

\subsection{CL-complexes}
\label{subsec:CL}

A \emph{CL-complex}\footnote{%
Here C = Cellular and L = Link-cofibrant, but also CL = Carrier Lemma.
} is a cellular link-cofibrant complex over a finite poset.

\begin{proposition}[existence]
    \label{prop:existence}
    There exist CL-complexes over every finite poset $P$.
\end{proposition}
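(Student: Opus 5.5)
The plan is to use the order complex $|\order(P)|$ and equip it with a $P$-system as follows. For $p \in P$, let $X^p = |\order(\cone(p))|$, the geometric realization of the full subcomplex on the chains contained in $\cone(p)$. More generally, for $A \leq P$ we will have $X^A = |\order(A)|$.

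The verification proceeds property by property.

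\textbf{Monotonicity.} This is clear.

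\textbf{Separation.} A point $x$ lies in the relative interior of a unique simplex $|\sigma|$, where $\sigma$ is a chain. Then $x \in X^p$ if and only if $\sigma \subseteq \cone(p)$, which holds if and only if $\max \sigma \leq p$. So $P_x = \up{\{\max\sigma\}}$, and the classifying map is $m(x) = \max\sigma$.

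\textbf{Closedness.} Each $X^p$ is a subcomplex, hence closed (Example~\ref{ex:closed}).

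\textbf{Link-cofibrancy.} We have $X^{\link(p)} = |\order(\link(p))|$, because a chain lies in some $\cone(q)$ with $q<p$ exactly when its maximum is below $p$. This is a subcomplex of the simplicial complex $|\order(\cone(p))|$, so the inclusion is a cofibration (Example~\ref{ex:LC}(1)).

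\textbf{Cellularity.} $\order(\cone(p))$ is a simplicial cone with apex $p$: every chain in $\cone(p)$ can be extended by adjoining $p$. So $X^p$ is star-shaped about the vertex $p$, and the straight-line homotopy contracts it.

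The only step needing care is identifying the unions $X^A$ with $|\order(A)|$ and checking that they are subcomplexes. In the finite case this is combinatorially routine. The main point to argue explicitly is the cone-contractibility of $X^p$, which is also immediate.

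As an alternative, one could argue by induction on $|P|$ using attachment (Proposition~\ref{prop:gluing-topological}). Remove a maximal element $p$, take a CL-complex $Y$ over $P-\{p\}$, and attach $Z = $ the cone on $Y^{\link(p)}$ along $Y^{\link(p)}$, using Proposition~\ref{prop:ccce}. That route, however, requires setting up $Z$ as a $\cone(p)$-complex. The order complex construction avoids this, so it is the one I would write out.
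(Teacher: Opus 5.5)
Your proof is correct and follows the paper's own argument. The paper proves existence via Construction~\ref{construction:order}, using the cone system $X^p = |\order(\cone(p))|$ on $|\order(P)|$: separation comes from the carrier simplex with smallest element $\max\sigma$, and cellularity and link-cofibrancy come from each $X^p$ being a geometric cone and a subcomplex, just as you do. Your alternative inductive attachment route is also in the paper, as Theorem~\ref{thm:existence}.
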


We postpone the proof to Section~\ref{subsec:CL-constructions}.
Meanwhile, combining Examples \ref{ex:closed}, \ref{ex:LC}, \ref{ex:cellular}:

\begin{example}
\label{ex:CL}
The following are CL-complexes:

(1) The face complex of a finite simplicial complex or finite regular CW-complex.

(2) Any subcomplex of a CL-complex.

(3) The product of two CL-complexes.
\qed
\end{example}

Here are the main consequences of the Carrier Lemma for CL-complexes.

\begin{theorem}
\label{thm:CL-map}
    Let $X^P, Y^Q$ be CL-complexes over finite posets $P,Q$. Let $\phi : P \to Q$ be a poset map. Then there is a unique $\phi$-homotopy class of continuous $\phi$-maps $f : X^P \to Y^Q$.
\end{theorem}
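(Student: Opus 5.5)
This is a direct application of the Carrier Lemma (Theorem~\ref{thm:carrier}) with $A = \emptyset$. A CL-complex supplies exactly the hypotheses needed on each side. The domain $X^P$ is a link-cofibrant $P$-complex over the finite poset $P$. The codomain system $(Y^q)$ is cellular, since each $Y^q$ is contractible. The remaining properties of $Y^Q$ (separated, link-cofibrant) are not needed for this statement. With $A = \emptyset$ we have $X^A = \emptyset$, and the empty map $f_\emptyset : \emptyset \to Y$ is vacuously a continuous $\phi|_\emptyset$-map.

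For existence, part~(1) of Theorem~\ref{thm:carrier} provides a continuous $\phi$-map $f : X \to Y$ extending $f_\emptyset$, which is simply an arbitrary continuous $\phi$-map. So the set of continuous $\phi$-maps $X^P \to Y^Q$ is nonempty.

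For uniqueness, take two continuous $\phi$-maps $f, g : X \to Y$. Both extend $f_\emptyset$, so part~(2) of Theorem~\ref{thm:carrier} says they are $\phi$-homotopic relative to $X^\emptyset = \emptyset$, that is, simply $\phi$-homotopic. Hence all continuous $\phi$-maps lie in a single $\phi$-homotopy class.

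I expect no real obstacle, since the substantive work (cell-by-cell extension and the product trick for homotopies) is already done in the proof of Theorem~\ref{thm:carrier}. The only point to state explicitly is that $\phi$-homotopy is an equivalence relation on continuous $\phi$-maps, so that ``unique $\phi$-homotopy class'' makes sense. Reflexivity and symmetry are clear. For transitivity, concatenating two $\phi$-homotopies gives a homotopy each of whose intermediate maps is a $\phi$-map, so the result is again a $\phi$-homotopy. Alternatively, uniqueness directly shows every pair is related, which is all the statement requires.
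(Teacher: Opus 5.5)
Your proposal is correct and matches the paper, which proves this as an immediate consequence of Theorem~\ref{thm:carrier}: you apply it with $A=\emptyset$, using link-cofibrancy of $X^P$ and cellularity of $Y^Q$, with part~(1) giving existence and part~(2) giving uniqueness of the $\phi$-homotopy class. Your additional remark that $\phi$-homotopy is an equivalence relation on continuous $\phi$-maps is a harmless bit of extra care that the paper leaves implicit.
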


\begin{proof}
    Immediate from Theorem~\ref{thm:carrier} (Carrier Lemma).
\end{proof}

\begin{proposition}
\label{prop:unique-H-type}
    Let $X^P, Y^P$ be CL-complexes over a finite poset~$P$. Then the spaces $X, Y$ are canonically homotopy equivalent. Indeed, any continuous $P$-map $f : X^P \to Y^P$ is a homotopy equivalence, and any continuous $P$-map $g : Y^P \to X^P$ is its homotopy inverse.
\end{proposition}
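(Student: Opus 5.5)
The plan is to apply Theorem~\ref{thm:CL-map} three times, each time with $\phi = \id_P$, and use the uniqueness clause to identify composites with identity maps up to homotopy.

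\emph{Existence.} Since $X^P$ and $Y^P$ are CL-complexes over the same finite poset, Theorem~\ref{thm:CL-map} applied to $\id_P$ provides continuous $P$-maps $f : X^P \to Y^P$ and $g : Y^P \to X^P$. This shows that the two maps in the statement actually exist.

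\emph{Composites.} Now take any such $f$ and $g$. By Example~\ref{ex:poset-system-maps}(4), the composite $g \circ f : X \to X$ is a continuous $\id_P \circ \id_P = \id_P$-map from $X^P$ to itself. The identity map $\id_X$ is trivially also a continuous $P$-map $X^P \to X^P$. Theorem~\ref{thm:CL-map}, applied to the CL-complex $X^P$ on both sides with $\phi = \id_P$, says there is a unique $P$-homotopy class of such maps. Hence $g \circ f$ is $P$-homotopic, and in particular homotopic, to $\id_X$. The same argument applied to $Y^P$ shows that $f \circ g \simeq \id_Y$. Therefore $f$ is a homotopy equivalence with homotopy inverse $g$.

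\emph{Canonicity.} Canonicity follows from the uniqueness part of Theorem~\ref{thm:CL-map} once more: any two $P$-maps $X^P \to Y^P$ are $P$-homotopic, so the homotopy equivalence $X \simeq Y$ is determined up to homotopy by the data, independent of any choices.

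I do not expect a real obstacle here. The only point to check is that the hypotheses of Theorem~\ref{thm:carrier} are met when the same CL-complex appears as both domain and codomain. This holds because the domain only needs to be link-cofibrant and the codomain only needs to be cellular, and a CL-complex is both.
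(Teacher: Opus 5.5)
Your proposal is correct and follows the paper's proof: Theorem~\ref{thm:CL-map} with $\phi=\id_P$ gives the maps $f$ and $g$, and then $gf\simeq\id_X$ and $fg\simeq\id_Y$ because each pair consists of continuous $P$-maps between the same CL-complexes, whose $P$-homotopy class is unique. You also check that a CL-complex is both link-cofibrant (needed on the domain) and cellular (needed on the target), which is exactly why Theorem~\ref{thm:CL-map} applies with the same complex on both sides.
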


\begin{proof}
    Theorem~\ref{thm:CL-map} provides continuous $P$-maps
    \[
    f : X^P \to Y^P, \quad g : Y^P \to X^P
    \]
    that are canonically defined up to homotopy. Then $gf, \id_X : X \to X$ are continuous $P$-maps and hence homotopic to each other. Likewise $fg, \id_Y : Y \to Y$ are continuous $P$-maps and hence homotopic to each other. Thus $f,g$ are homotopy inverse to each other.
\end{proof}

\begin{proposition}
\label{prop:unique-H-class}
    Let $\phi: P \to Q$ be a map of finite posets. Suppose
    \[
    \xymatrix{
    X_1^P
        \ar[r]^{f_1}
    &
    Y_1^Q
    &
    X_2^P
        \ar[r]^{f_2}
    &
    Y_2^Q
    }
    \]
    are continuous $\phi$-maps of CL-complexes. Then $f_1, f_2$ are canonically homotopy equivalent in the following sense: given any continuous $P$-map $\xi : X_1^P \to X_2^P$ and any continuous $Q$-map $\eta : Y_1^Q \to Y_2^Q$, the diagram
    \[
    \xymatrix{
    X_1^P
        \ar[r]^{f_1}
        \ar[d]_{\xi}
    &
    Y_1^Q
        \ar[d]^{\eta}
    \\
    X_2^P
        \ar[r]_{f_2}
    &
    Y_2^Q
    }
    \]
    commutes up to homotopy. If one of $f_1, f_2$ is a homotopy equivalence, then so is the other.
\end{proposition}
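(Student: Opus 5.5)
The plan is to compare the two composites around the square as maps between CL-complexes and invoke the uniqueness half of Theorem~\ref{thm:CL-map}. By Example~\ref{ex:poset-system-maps}(4), the composite $\eta \circ f_1 : X_1^P \to Y_2^Q$ is a continuous $\id_Q\circ\phi = \phi$-map. Likewise $f_2 \circ \xi : X_1^P \to Y_2^Q$ is a continuous $\phi\circ\id_P = \phi$-map. Both go from the CL-complex $X_1^P$ to the CL-complex $Y_2^Q$. Theorem~\ref{thm:CL-map} says there is a unique $\phi$-homotopy class of such maps. Hence $\eta f_1 \simeq f_2 \xi$ through $\phi$-maps, and in particular they are homotopic. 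This gives the commutativity of the square up to homotopy.

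For the last assertion, I first note that suitable $\xi$ and $\eta$ exist. Theorem~\ref{thm:CL-map} applied with $\id_P$ and $\id_Q$ provides continuous $P$- and $Q$-maps between the relevant complexes. By Proposition~\ref{prop:unique-H-type}, every such map is a homotopy equivalence. So in the homotopy-commutative square, both vertical maps $\xi$ and $\eta$ are homotopy equivalences.

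Suppose $f_1$ is a homotopy equivalence. Let $\xi'$ be a homotopy inverse of $\xi$; by Proposition~\ref{prop:unique-H-type} it may be taken to be any $P$-map $X_2^P\to X_1^P$. Then
\[
f_2 \simeq f_2 \xi \xi' \simeq \eta f_1 \xi',
\]
which is a composite of homotopy equivalences and hence a homotopy equivalence.

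The case where $f_2$ is a homotopy equivalence is symmetric. One can run the same argument with the roles of the indices swapped, using $P$- and $Q$-maps in the reverse direction. Alternatively, one can write $f_1 \simeq \eta' f_2 \xi$ with $\eta'$ a homotopy inverse of $\eta$.

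There is no real obstacle here. The only point needing care is checking that the composites are $\phi$-maps rather than merely $\id$-maps, so that the uniqueness statement applies to them. The word ``canonically'' is justified because $\xi$ and $\eta$ are themselves unique up to homotopy.
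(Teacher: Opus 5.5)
Your proposal is correct and follows essentially the same route as the paper: $\eta f_1$ and $f_2\xi$ are both continuous $\phi$-maps $X_1^P \to Y_2^Q$, hence homotopic by Theorem~\ref{thm:CL-map}, and the final claim follows because $\xi,\eta$ are homotopy equivalences by Proposition~\ref{prop:unique-H-type}. The only difference is that you spell out two points the paper leaves implicit: the existence of $\xi,\eta$, and the argument that if three sides of a homotopy-commutative square are homotopy equivalences then so is the fourth.
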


Note that $\eta, \xi$ are homotopy equivalences by Proposition~\ref{prop:unique-H-type}.

\begin{proof}
For the first statement, $\eta f_1$ and $f_2 \xi$ are continuous $\phi$-maps and therefore homotopic to each other. For the second statement, if three sides of a homotopy-commutative square are homotopy equivalences then so is the fourth.
\end{proof}

Propositions \ref{prop:unique-H-type} and~\ref{prop:unique-H-class} ensure that the following convention is meaningful.

\begin{convention}[poset homotopy]
\label{conv:poset-homotopy}
    We adopt the following language.
%
    The `homotopy type' of a finite poset $P$ is understood to mean the homotopy type of any CL-complex $X^P$.
%
    The `homotopy class' of a map $\phi : P \to Q$ between finite posets is understood to mean the homotopy class of any continuous $\phi$-map $f : X^P \to Y^Q$ between CL-complexes.
\end{convention}

We shall see that this agrees with the standard convention: this assigns to a poset the homotopy type of its order complex, and to a poset map the homotopy class of the induced simplicial map between order complexes. See Corollary~\ref{cor:conventions-agree}.

\begin{remark}
\label{rem:proof-strategies}
    The proof strategies (S1)--(S5) in Section~\ref{subsec:motivation} are to be understood in terms of Convention~\ref{conv:poset-homotopy}. The reader may confirm their validity using Propositions \ref{prop:unique-H-type} and~\ref{prop:unique-H-class}. 
\end{remark}

\subsection{Pithy proofs}
\label{subsec:short-proofs}

Here are some short CL-complex proofs, assuming existence.

\begin{example}[up-cone]
\label{ex:concise-up-cone}
    Let $P$ be a finite poset with a smallest element $0$. Then $P$ is contractible. Indeed, let $\{*\}$ be a singleton space and let $\{*\}^p = \{*\}$ for all~$p$. Then $\{*\}^P$ is a singleton CL-complex for~$P$. For separation, note that $P_* = P$ has smallest element~$0$.
    \end{example}

\begin{example}[down-cone]
\label{ex:concise-down-cone}
    Let $P$ be a finite poset with a largest element $\infty$. Then $P$ is contractible. Indeed, let $X^P$ be any CL-complex for~$P$. Then $X = X^P = X^{\cone(\infty)} = X^\infty$ is contractible, by cellularity.
\end{example}

\begin{example}[comparable maps are homotopic]
\label{ex:concise-comparable}
    Let $\phi, \psi : P \to Q$ be maps of finite posets. Suppose $\phi \leq \psi$. Then $\phi, \psi$ are homotopic. Indeed, let $f : X^P \to Y^Q$ be a continuous $\phi$-map of CL-complexes. Then $f$ is a $\psi$-map of CL-complexes.
\end{example}

\begin{example}[product of two posets]
\label{ex:concise-product}
    Let $P, Q$ be finite posets. Then the homotopy type of $P \times Q$ is the product of the homotopy types of $P$ and $Q$. Indeed, if $X^P, Y^Q$ are CL-complexes for $P,Q$ then $X^P \times Y^Q$ is a CL-complex for $P \times Q$.
\end{example}

\begin{example}[regular CW-complexes]
\label{ex:concise-regular-CW}
    A finite regular CW-complex~$X$ is homotopy equivalent to its face poset~$A$. Indeed, the face complex~$X^A$ is a CL-complex for~$A$.
\end{example}

The conciseness is the point. Other simple proofs are readily available. For example, it is not difficult to show directly that a regular CW-complex is homeomorphic (more than just homotopy equivalent) to the order complex on its face poset.

\subsection{Existence}
\label{subsec:CL-constructions}

It is time to build CL-complexes.
Here is the quickest construction.

\begin{construction}[order complexes as CL-complexes]
\label{construction:order}
    Let
    \[
        X = |\order(P)|
    \]
    be the geometric realization of the order complex of~$P$. Define a $P$-system by
    \[
        X^p = |\order(\cone(p))|.
    \]
%
    To show that $(X^p)$ is separated, let $|\sigma|$ be the unique geometric simplex that contains a given point $x$ in its interior. We have $x \in X^p$ iff $\sigma \in \order(\cone(p))$ iff $\max(\sigma) \leq p$, so $P_x$ has smallest element $\max(\sigma)$ as required. Each $X^p$ is a geometric cone and a simplicial subcomplex, so $X^P$ is cellular and link-cofibrant.
Thus $X^P = |\order(P)|^P$ is a CL-complex.
\end{construction}

\begin{proposition}[maps of order complexes]
\label{prop:order-map}
    Let $\phi: P \to Q$ be a map of finite posets. Let $|\phi| : |\order(P)| \to |\order(Q)|$ denote the geometric realization of the induced map of order complexes. Then $|\phi|$ is a $\phi$-map with respect to the `cone' CL-complex structures on $|\order(P)|$, $|\order(Q)|$ defined in Construction~\ref{construction:order}.
\end{proposition}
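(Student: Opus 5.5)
The plan is to check the defining inclusion $|\phi|(X^p) \subseteq Y^{\phi(p)}$ directly, for each $p \in P$. Here
\[
X^p = |\order(\cone(p))|, \qquad Y^{\phi(p)} = |\order(\cone(\phi(p)))|.
\]
Continuity of $|\phi|$ is automatic, since it is the geometric realization of a simplicial map (Example~\ref{ex:poset-system-maps}(2)). So the whole content is a combinatorial statement about chains, followed by passage to geometric realizations.

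First, the induced simplicial map $\order(\phi)$ sends a chain $\sigma = \{p_0 < \dots < p_k\}$ in $P$ to its image $\phi(\sigma)$. Because $\phi$ is order-preserving, the image $\{\phi(p_0) \le \dots \le \phi(p_k)\}$ is again a chain, possibly with repetitions collapsed. Second, suppose $\sigma \in \order(\cone(p))$, meaning every $p_i \le p$. Then every $\phi(p_i) \le \phi(p)$, so $\phi(\sigma) \in \order(\cone(\phi(p)))$. In other words, $\order(\phi)$ restricts to a simplicial map
\[
\order(\cone(p)) \to \order(\cone(\phi(p))).
\]
Third, geometric realization respects subcomplexes. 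The realization $|\phi|$ maps each geometric simplex $|\sigma|$ onto $|\phi(\sigma)|$, since it is affine on $|\sigma|$ with vertices going to the vertices of $\phi(\sigma)$. Since $X^p$ is the union of the $|\sigma|$ for $\sigma \in \order(\cone(p))$, it follows that
\[
|\phi|(X^p) \subseteq \bigcup_{\sigma} |\phi(\sigma)| \subseteq |\order(\cone(\phi(p)))| = Y^{\phi(p)}.
\]

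There is no serious obstacle. The only point needing a moment's care is that $\phi$ need not be injective on a chain, so $\phi(\sigma)$ may be a lower-dimensional simplex. This is harmless: it is still a simplex of $\order(\cone(\phi(p)))$, and the realization maps $|\sigma|$ onto it affinely. I would state this explicitly and be done.
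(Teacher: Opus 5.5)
Your proof is correct and follows the paper's own argument: the paper notes that monotonicity gives $\phi(\cone(p)) \subseteq \cone(\phi(p))$ and concludes $|\phi|(|\order(\cone(p))|) \subseteq |\order(\cone(\phi(p)))|$ by functoriality of the order complex. Your chain-by-chain and simplex-by-simplex verification, including the remark about non-injectivity on chains, simply spells out that functoriality.
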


\begin{proof}
    For $p \in P$, we have $\phi(\cone(p)) \subseteq \cone(\phi(p))$ by monotonicity of~$\phi$. It follows that
    $
    |\phi|(|\cone(p)|)
    \subseteq
    |\cone(\phi(p))|
    $ by the functoriality of the order complex.
\end{proof}

Construction~\ref{construction:order} and Proposition~\ref{prop:order-map} immediately imply:

\begin{corollary}
\label{cor:conventions-agree}
    The CL-complex homotopy type of a finite poset is the same as its order-complex homotopy type. The CL-complex homotopy class of a map of finite posets is the same as its order-complex homotopy class.
    \qed
\end{corollary}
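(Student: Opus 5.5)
The plan is to instantiate Convention~\ref{conv:poset-homotopy} with the particular CL-complexes supplied by Construction~\ref{construction:order}. By Proposition~\ref{prop:unique-H-type}, the homotopy type does not depend on which CL-complex we pick. Likewise, by Proposition~\ref{prop:unique-H-class}, the homotopy class of a map does not depend on which CL-complexes and which continuous $\phi$-map we pick. So for each statement it suffices to exhibit one convenient representative.

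For the first statement, Construction~\ref{construction:order} shows that $|\order(P)|^P$, with $X^p = |\order(\cone(p))|$, is a CL-complex for $P$. Its underlying space is exactly $|\order(P)|$. Hence the CL-complex homotopy type of $P$, which is the homotopy type of any CL-complex over $P$, is the homotopy type of $|\order(P)|$. Nothing further is needed beyond invoking the uniqueness in Proposition~\ref{prop:unique-H-type}.

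For the second statement, take $\phi : P \to Q$. Proposition~\ref{prop:order-map} says that the simplicial realization $|\phi| = |\order(\phi)|$ is a continuous $\phi$-map between the CL-complexes $|\order(P)|^P$ and $|\order(Q)|^Q$. By Theorem~\ref{thm:CL-map}, every continuous $\phi$-map between these two CL-complexes is $\phi$-homotopic to $|\phi|$. Proposition~\ref{prop:unique-H-class} then transports this to any other pair of CL-complexes $X^P, Y^Q$. Concretely, any continuous $\phi$-map $f : X^P \to Y^Q$ fits into a square with $|\phi|$ that commutes up to homotopy, and the vertical maps in that square are the canonical $P$- and $Q$-maps, which are homotopy equivalences. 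So the class of $f$ corresponds to the class of $|\phi|$, which is the order-complex homotopy class of $\phi$.

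There is no real obstacle here, since all the work was done in Construction~\ref{construction:order} and Proposition~\ref{prop:order-map}. The only point needing care is what ``the same homotopy class'' means for maps between different spaces: it is understood via the canonical identifications of Proposition~\ref{prop:unique-H-type}, which is precisely what Proposition~\ref{prop:unique-H-class} provides.
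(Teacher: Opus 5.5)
Your proposal is correct and follows the paper's own argument: the paper derives this corollary immediately from Construction~\ref{construction:order} (the order complex with its cone system is a CL-complex) and Proposition~\ref{prop:order-map} (the realized map $|\phi|$ is a $\phi$-map). The choice-independence you supply through Propositions~\ref{prop:unique-H-type} and~\ref{prop:unique-H-class} is exactly what makes Convention~\ref{conv:poset-homotopy} well defined.
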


More generally, we can build CL-complexes using topological attachment.

\begin{theorem}[existence]
\label{thm:existence}
    Let $P$ be a finite poset, let $A \leq P$, and let $Y^A$ be a CL-complex. Then there exists a CL-complex $X^P$ that extends $Y^A$ in the sense that the construction comes with an isomorphism $Y^A \to X^A$ of topological $A$-complexes. If $Y^A$ is compact or Hausdorff or both, we can construct $X^P$ to be compact or Hausdorff or both.
\end{theorem}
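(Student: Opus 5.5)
The plan is induction on the number of elements of $P - A$, adding one element at a time and using topological attachment (Proposition~\ref{prop:gluing-topological}). If $A = P$ there is nothing to do. Otherwise choose a minimal element $p$ of $P - A$ and set $A' = A \cup \{p\}$. Minimality of $p$ gives $\link(p) \subseteq A$, so $A'$ is closed. It suffices to extend $Y^A$ to a CL-complex $Y'^{A'}$, preserving compactness and Hausdorffness, and then iterate finitely many times. Composing the resulting isomorphisms gives the required isomorphism $Y^A \to X^A$.

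For the single step, write $A' = A \cup \cone(p)$. Both $A$ and $\cone(p)$ are closed in $A'$, and their intersection is $N = \link(p)$. The subcomplex $Y^{N}$ is a CL-complex by Example~\ref{ex:CL}(2), and it is closed in $Y^A$. By Proposition~\ref{prop:ccce}, embed the space $Y^N$ as a closed cofibrant subspace of a contractible space $C$; the cone on $Y^N$ works, and it is compact Hausdorff if $Y^N$ is. Put a $\cone(p)$-system on $Z = C$ by $Z^q = Y^q$ for $q \in \link(p)$ and $Z^p = C$.

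I then check that $Z^{\cone(p)}$ is a CL-complex.
\begin{itemize}
\item Separation: a point of $Y^N$ has its smallest indicator element from $Y^N$, since $Z^p = C$ contains everything. A point of $C - Y^N$ has indicator set $\{p\}$.
\item Closedness: $Y^N$ is closed in $C$, and each $Y^q$ is closed in $Y^N$.
\item Link-cofibrancy at $p$: this is exactly the cofibration $Y^N \to C$.
\item Link-cofibrancy at $q < p$: this is inherited from $Y^N$.
\item Cellularity: $C$ is contractible, and each $Y^q$ is contractible.
\end{itemize}
The identity $Y^N \to Z^N$ is a homeomorphism and an isomorphism of $N$-complexes. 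Corollary~\ref{cor:gluing} and Proposition~\ref{prop:gluing-topological} then show that $Y^A \cup_{\id} Z^{\cone(p)}$ is a closed, link-cofibrant, cellular $A'$-complex, compact or Hausdorff whenever both pieces are. The canonical map from $Y^A$ is a closed embedding onto $X^A$. This gives the required isomorphism of topological $A$-complexes, because the gluing only identifies points of $Y^N$ with themselves.

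Proposition~\ref{prop:existence} is the special case $A = \emptyset$, with $Y^\emptyset$ the empty space.

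I expect the main obstacle to be routine point-set bookkeeping. First, one must check that the adjunction-space topology restricts to the original topology on $Y^A$ and on $C$. The proof of Proposition~\ref{prop:gluing-topological} already asserts this, using that both $Y^N \subseteq Y^A$ and $Y^N \subseteq C$ are closed. Second, for the Hausdorff case one must confirm that the cone on a compact Hausdorff space is Hausdorff and that the adjunction along a closed subspace stays Hausdorff. Compactness makes both of these standard.
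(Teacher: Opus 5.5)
Your proposal is correct and follows the paper's proof essentially step for step: pick a minimal $p \in P - A$, embed $Y^{\link(p)}$ as a closed cofibrant subspace of a contractible space (the cone), make that a CL-complex over $\cone(p)$, attach it via Proposition~\ref{prop:gluing-topological}, and iterate. One quibble concerns your closing remark that compactness makes the Hausdorff bookkeeping standard: the Hausdorff clause is meant to hold without compactness, and it does, because the quotient-topology cone on any Hausdorff space is Hausdorff, and so is the union of two Hausdorff spaces along a common closed subspace.
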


\begin{proof}
    We may assume $A \ne P$. Let $p$ be a minimal element of $P-A$, and write $B = \cone(p)$ and $N = A \cap B = \link(p)$.
    By Proposition~\ref{prop:ccce}, there exists a closed cofibrant inclusion $f : Y^{N} \to Z$, for some contractible space~$Z$. Define $Z^b = f(Y^b)$ for $b \in N$, and $Z^p = Z$. Then $Z^B$ is a CL-complex, with $f$ restricting to an isomorphism $Y^N \to Z^N$ of topological $N$-complexes. By Proposition~\ref{prop:gluing-topological} (topological attachment) the space
    \[
    X^{A \cup B}
    =
    Y^A \cup_f Z^B
    \]
    has the structure of a CL-complex for $A \cup B = A \cup \{p\}$.
    Repeat until done.
    The `compact or Hausdorff or both' assertion follows from Propositions \ref{prop:ccce} and~\ref{prop:gluing-topological}.
\end{proof}

We extract an iterative construction (with choices) from the inductive proof above:

\begin{construction}
\label{construction:iterative}
    Let $X^A$ be a CL-complex for some $A \leq P$.
    Let $p_1, \dots, p_n$ list the elements of $P-A$ in a sequence that is non-decreasing with respect to the partial order. Let $A_k = A \cup \{p_1, \dots, p_k\}$ with $A_0 = A$. Iteratively, set
    \[
    X^{p_k} = Z_k,
    \quad
    X^{A_{k}} = X^{A_{k-1}} \cup_{f_k} X^{p_k} 
    \]
    where $f_k, Z_k$ are chosen such that
    $
    f_k : X^{\link(p_k)} \to Z_k
    $
    is a closed cofibrant inclusion into a contractible space.
    Then $X^P$ is a CL-complex.
\end{construction}

The procedure above yields (up to isomorphism) {every} possible CL-complex extending the starting complex.
Indeed, in any such extension we have the relationship $X^{A_k} = X^{A_{k-1}} \cup X^{p_k}$ where $X^{p_k}$ is a contractible space containing $X^{\link(p_k)}$ by a closed cofibrant inclusion map.
The freedom lies in the choice of such an inclusion at each step in the iteration.

Suppose we instantiate Construction~\ref{construction:iterative} in the simplest possible way, starting with $X^\emptyset = \emptyset$ and always taking the cone inclusion.
This gives the following direct construction.

\begin{construction}
\label{construction:join}
    Identify $P$ with an affinely independent subset of a real vector space. Construct a poset system $(X^p)$ by defining
    \[
    X^p
    =
    \{ p \} \star \bigcup_{q < p} X^q
    =
    \{p\} \star X^{\link(p)} 
    \]
    recursively with respect to the partial order,\footnote{%
    Here $\star$ is the geometric join, with $A \star B$ being the union of line segments from $A$ to $B$.
    }
    and let $X = X^P$.
\end{construction}

In fact, Constructions \ref{construction:order} and~\ref{construction:join} are isomorphic, as the reader may readily verify by induction (or otherwise).
The first construction supposes prior knowledge of order complexes.
The second construction is the inevitable consequence of building CL-complexes iteratively with minimal effort. From this point of view, order complexes emerge automatically as the simplest possible CL-complexes: they invent themselves.

\subsection{Further constructions}
\label{subsec:CL-further}

If a poset has a particular structure, we can sometimes use that structure to guide the construction of a CL-complex, differently from constructing the order complex.
In the following example $\Delta(I)$ denotes the complete simplicial complex on a set~$I$. This is the set of nonempty finite subsets of~$I$.

\begin{example}[simplicial complex]
\label{ex:lazy-sc}
    Let $K$ be a simplicial complex on a finite set~$I$.
    Thinking of it as a poset $K \leq \Delta(I)$,
    we construct a CL-complex $X^K$ as lazily as possible. The minimal elements are the singleton sets\footnote{%
    For this example we use square brackets to delimit the list of distinct elements of a subset.}
    $[a]$, so the $X^{[a]}$ must be disjoint contractible sets. The lazy choice is to make them singletons. For $\sigma \in K$, we require $X^\sigma$ to be contractible and contain $X^{[a]}$ for all $a \in \sigma$. The lazy choice is to take the convex hull. This leads to the following construction. Identify $I$ with an affinely independent subset of a real vector space. For each $\sigma = [a_0, a_1, \dots, a_k] \in K$, define:
    \[
    X^\sigma
    = \conv(a_0 , a_1, \dots, a_k)
    = a_0 \star a_1 \star \dots \star a_k
    \]
    Through pure indolence we have recovered the face complex $|K|^K$ of the simplicial complex.
\end{example}

\begin{remark}
    This, {too}, is isomorphic to the cone CL-structure on the order complex. The barycentric subdivision homeomorphism $|\order(K)| \to |K|$ carries $|\order(\cone(\sigma))|$ to the geometric simplex~$|\sigma|$, so that $\ocl(K) = |\order(K)|^K$ and $|K|^K$ are topologically isomorphic CL-complexes.
    The difference is motivational: in $|K|^K$ we directly recognize $|K|$ as a topological realization of $K$ without invoking barycentric subdivision.
\end{remark}

\begin{remark}
\label{rem:sc-homotopy-type}
    The `homotopy type' of a simplicial complex~$K$ is unambiguously defined, independently of whether we regard it as a simplicial complex or as a poset. We are free to understand it to mean the homotopy type of the geometric realization~$|K|$ of the simplicial complex~$K$, the homotopy type of the geometric realization of the order complex of the poset~$K$, or, in the finite case, the homotopy type of any CL-complex for the poset~$K$. All three interpretations are canonically homotopy equivalent as indicated above.
\end{remark}

\begin{example}[prod-complexes \cite{deSilva_GIRSSVY_2026}]
\label{ex:prod-complexes}
    Let $I, J$ be finite sets and let $A \leq \Delta(I) \times \Delta(J)$. We can realize $A$ as a CL-subcomplex of a product of complete simplicial complexes
    \[
    |\prodc(A)|^A
    =
    (|\Delta(I)| \times |\Delta(J)|)^A
    \leq
    |\Delta(I)|^{\Delta(I)} \times |\Delta(J)|^{\Delta(J)}
    \]
    with cells $|\prodc(A)|^{(\sigma,\tau)} = |\sigma| \times |\tau|$.
    This object with its cell structure is called a `{prod-complex}' in~\cite{deSilva_GIRSSVY_2026}.
    An alternative construction produces a simplicial complex: let $\mu : \Delta(I) \times \Delta(J) \to \Delta(I \times J)$ denote the poset map $(\sigma,\tau) \mapsto \sigma \times \tau$, then
    \[
    \simp(A) = \down{\,\mu(A)} \leq \Delta(I \times J)
    \]
    is a simplicial complex on $I \times J$, called the `{simplexification}' of~$A$. Then $|\simp(A)|$ admits two natural CL-complex structures: its face complex structure $|\simp(A)|^{\simp(A)}$, and the structure $|\simp(A)|^A$ defined\footnote{%
    Verifying separation takes a little thought: for $x$ in the relative interior of $|\rho|$, one shows that $x \in |\sigma \times \tau|$ iff $(\sigma, \tau) \geq (\rho_I, \rho_J)$ where $\rho_I, \rho_J$ are the projections of $\rho$ onto $\Delta(I), \Delta(J)$.
    } by the formula
    \[
    |\simp(A)|^{(\sigma,\tau)} = |\sigma \times \tau|
    \]
    for $(\sigma,\tau) \in A$.
    To relate the two geometric realizations, a continuous `squashing' map
    \[
    \xi : |\simp(A)| \to |\prodc(A)|
    \]
    is defined in~\cite{deSilva_GIRSSVY_2026} and shown using Smale's mapping theorem to be a homotopy equivalence.
    For a shorter proof (in the finite case) note that $\xi : |\simp(A)|^A \to |\prodc(A)|^A$ is an $A$-map of CL-complexes and hence a homotopy equivalence.
\end{example}

\subsection{Equivalence of categories}
\label{subsec:equivalence}

We consider the situation in terms of categories.

A \emph{CL-map} between CL-complexes $X^P, Y^Q$ over finite posets $P,Q$ is a pair $(f,\phi)$ where $\phi : P \to Q$ is a poset map and $f : X \to Y$ is a continuous $\phi$-map.
Two CL-maps $(f,\phi), (g,\psi) : X^P \to Y^Q$ are \emph{CL-homotopic} if $\phi = \psi$ and $f,g$ are $\phi$-homotopic.

Let $\cl$ denote the category of CL-complexes and CL-maps. Let $\hcl$ denote the category of CL-complexes and CL-homotopy classes of CL-maps. Let $\topcat$ denote the category of topological spaces and continuous maps. Let $\htop$ denote the category of topological spaces and homotopy classes of maps. Let $\fposet$ denote the category of finite posets and poset maps.
These categories fit into the following diagram, which commutes exactly:
\[
\xymatrix{
\topcat
    \ar[d]
&
\cl
    \ar[l]
    \ar[r]
    \ar[d]
&
\fposet
    \ar@{=}[d]
\\
\htop
&
\hcl
    \ar[l]
    \ar[r]
&
\fposet
}
\]
The horizontal arrows are the forgetful functors: from a CL-complex $X^P$ we extract the topological space~$X$ and the finite poset~$P$, and from a CL-map $(f,\phi)$ we extract the continuous map~$f$ and the poset map $\phi$. The vertical arrows fix all objects and take each morphism to its homotopy class.

\begin{proposition}
\label{prop:hCL=fPoset}
    The forgetful functor $\hcl \longrightarrow \fposet$ is an equivalence of categories.
\end{proposition}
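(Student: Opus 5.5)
We prove that the forgetful functor $\hcl \to \fposet$ is well defined, essentially surjective, full, and faithful. Together these give an equivalence of categories.

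\emph{Well-definedness.} A CL-homotopy class of a CL-map $(f,\phi)$ has a single well-defined poset component $\phi$, because CL-homotopic maps are required to share the same poset map. Composition in $\hcl$ is well defined: if $f \simeq f'$ through $\phi$-maps and $g \simeq g'$ through $\psi$-maps, then $gf \simeq g'f'$ through $\psi\phi$-maps. This follows by composing homotopies and using Example~\ref{ex:poset-system-maps}(4). Identities are clearly preserved, so the functor exists.

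\emph{Essential surjectivity.} This is Proposition~\ref{prop:existence}. Concretely, Construction~\ref{construction:order} gives the CL-complex $|\order(P)|^P$ over each finite poset $P$. In fact every finite poset is hit on the nose, not merely up to isomorphism.

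\emph{Full and faithful.} Fix CL-complexes $X^P, Y^Q$ and a poset map $\phi : P \to Q$.
\begin{itemize}
    \item Fullness requires a continuous $\phi$-map $X \to Y$ to exist. This is Theorem~\ref{thm:carrier}(1) with $A=\emptyset$, which applies because $X^P$ is link-cofibrant over a finite poset and $(Y^q)$ is cellular.
    \item Faithfulness requires any two continuous $\phi$-maps $f,g : X \to Y$ to be CL-homotopic, that is, $\phi$-homotopic. This is Theorem~\ref{thm:carrier}(2) with $A=\emptyset$.
\end{itemize}
Together these say the fiber of $\hom_{\hcl}(X^P,Y^Q) \to \hom_{\fposet}(P,Q)$ over each $\phi$ is a single point; this is Theorem~\ref{thm:CL-map}. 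Hence the map on hom-sets is bijective.

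The only point needing care is the distinction between a plain homotopy and a $\phi$-homotopy in the faithfulness step. The Carrier Lemma provides exactly the $\phi$-homotopy that CL-homotopy demands, so there is no real obstacle. Beyond that, the argument is the routine verification that composition respects CL-homotopy.
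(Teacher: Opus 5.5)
Your proposal is correct and follows the paper's proof: surjectivity on objects comes from Proposition~\ref{prop:existence}, and fullness and faithfulness come from the Carrier Lemma in the form of Theorem~\ref{thm:CL-map}. Your extra check that composition respects CL-homotopy, which makes $\hcl$ a category and the functor well defined, is a routine detail the paper leaves implicit.
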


\begin{proof}
    It is surjective on objects by Proposition~\ref{prop:existence}. It is full and faithful on morphism sets by Theorem~\ref{thm:CL-map}. 
\end{proof}

Order complexes fit into the picture in the following way.

\begin{remark}[inverse equivalence]
\label{rem:inverse-functor}
Let $\ocl(P) = |\order(P)|^P$ denote the geometrically realized order complex of a finite poset~$P$ with the `cones' CL-complex structure described above.
Given a poset map $\phi : P \to Q$, let $|\phi| : |\order(P)| \to |\order(Q)|$ be the induced map. Then $\ocl(\phi) = (|\phi|, \phi)$ is a CL-map by Proposition~\ref{prop:order-map}.
In this way, we get a functor $\ocl$ from finite posets to CL-complexes.
The composite
\[
\xymatrix{\fposet \ar[r]^-{\ocl} & \cl \ar[r] & \hcl}
\]
is an inverse equivalence (indeed, a strict right-inverse) to the equivalence
\[
\xymatrix{\hcl \ar[r] & \fposet}
\]
of Proposition~\ref{prop:hCL=fPoset}.
\end{remark}

\section{Fiber Theorems}
\label{sec:fiber-theorems}

Fiber theorems give locally testable conditions that a given map is a homotopy equivalence.
Here we consider three such theorems.
Section~\ref{subsec:QTA} gives a CL-complex proof of Quillen's Theorem~A in the special case of finite posets.
We encounter the fundamental asymmetry of CL-complexes with respect to poset reversal: only one of the two dual formulations has a natural proof using CL-complexes.
Quillen's theorem has a relative version which we discuss in Section~\ref{subsec:QTA-relative}, again for finite posets, using the formalism of link-cofibrant complexes.
Section~\ref{subsec:product-fiber} concerns subcomplexes of a product complex: if the projection of the subcomplex onto one of the factors has contractible fibers, then it is a homotopy equivalence. The natural CL-complex argument requires a verification that seemingly depends on stronger results that already imply the whole theorem. We present the argument anyway.

\subsection{Quillen's Theorem~A}
\label{subsec:QTA}

In his development of higher algebraic K-theory~\cite{Quillen_1973}, Quillen obtained, as a useful tool, a sufficient criterion for a functor between two categories to induce a homotopy equivalence of classifying spaces. This is the famous `Quillen's Theorem~A'.

CL-complexes enable a very short proof in the case where the two categories are finite posets.
The reader may wish to compare Walker's short proof~\cite[Theorem 2.2]{Walker_1981} for arbitrary posets, based on his Carrier Lemma.

\begin{theorem}[QTA for finite posets]
\label{thm:QTA}
Let $\phi : P \to Q$ be a map between finite posets. Suppose each Quillen fiber
$\phi/q = \phi^{-1}(\cone(q))$
is a contractible subposet of $P$. Then $\phi$ is a homotopy equivalence.
\end{theorem}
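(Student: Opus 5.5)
Take any CL-complex $X^P$ (Proposition~\ref{prop:existence}, or concretely $\ocl(P)$ from Construction~\ref{construction:order}) and form the $\phi$-pushforward $Q$-system $X^q = X^{\phi/q}$ on the same space $X$ (Example~\ref{ex:phi-pushforward}). The aim is to show that $X^Q$ is itself a CL-complex for $Q$. Then $\id_X : X^P \to X^Q$ is a continuous $\phi$-map of CL-complexes, as noted just after Remark~\ref{rem:Quillen-fiber}. By Convention~\ref{conv:poset-homotopy} it represents the homotopy class of $\phi$, and it is a homeomorphism. So $\phi$ is a homotopy equivalence, which is strategy (S2).

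Concretely: fix a CL-complex $Y^Q$ and a continuous $\phi$-map $f : X^P \to Y^Q$ (Theorem~\ref{thm:CL-map}). Since $f$ is a $\phi$-map, it is also a $Q$-map $X^Q \to Y^Q$, because $f(X^{\phi/q}) \subseteq \bigcup_{\phi(p)\leq q} Y^{\phi(p)} \subseteq Y^q$. Proposition~\ref{prop:unique-H-type} then says $f$ is a homotopy equivalence.

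Showing that $X^Q$ is a CL-complex has three parts.

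\begin{itemize}
\item \emph{Separation and closedness.} These follow from Example~\ref{ex:separation-behaves}(5) and Example~\ref{ex:closed}(6).
\item \emph{Link-cofibrancy.} This is Example~\ref{ex:LC}(4): each inclusion $X^{\link(q)} \to X^{\cone(q)}$ is an inclusion of subcomplexes of $X^P$, hence a cofibration by Proposition~\ref{prop:cf-subcomplexes}.
\item \emph{Cellularity.} This is the main point, since the paper warns that cellularity is not preserved by pushforward. We need each $X^q = X^{\phi/q}$ to be contractible.
\end{itemize}

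For cellularity, note that $\phi/q \leq P$ is a closed subset, so $X^{\phi/q}$ is a subcomplex of $X^P$. By Example~\ref{ex:CL}(2) it is a CL-complex for the poset $\phi/q$. Its homotopy type is therefore the homotopy type of $\phi/q$ in the sense of Convention~\ref{conv:poset-homotopy}. By hypothesis $\phi/q$ is contractible, meaning some (equivalently every) CL-complex for it is contractible. Hence $X^{\phi/q}$ is contractible, by Proposition~\ref{prop:unique-H-type} applied to $X^{\phi/q}$ and any contractible CL-complex for $\phi/q$.

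The one subtlety is interpretive: "contractible subposet" must be read via Convention~\ref{conv:poset-homotopy}, or equivalently via order complexes by Corollary~\ref{cor:conventions-agree}. With that reading, the cellularity step is immediate. I expect no real obstacle beyond checking that the pushforward preserves separation and link-cofibrancy, both of which are already recorded earlier in the paper.
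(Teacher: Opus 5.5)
Your proposal is correct and follows the paper's proof essentially verbatim. You push the CL-complex $X^P$ forward along $\phi$, get cellularity because each $X^{\phi/q}$ is a CL-subcomplex for the contractible poset $\phi/q$, and conclude because the identity homeomorphism $\id_X : X^P \to X^Q$ is a $\phi$-map of CL-complexes. Your extra check that any $\phi$-map $f : X^P \to Y^Q$ is also a $Q$-map $X^Q \to Y^Q$ usefully spells out the final step, which the paper leaves to Convention~\ref{conv:poset-homotopy}.
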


\begin{proof}
    Let $X^P$ be a CL-complex. The Quillen fiber hypothesis implies that the $\phi$-pushforward $X^Q$ is a CL-complex: for cellularity, each $X^q = X^{\phi/q}$ is a CL-complex for $\phi/q$ and therefore contractible. Then the identity homeomorphism $\id_X : X^P \to X^Q$ is a $\phi$-map of CL-complexes (Example~\ref{ex:phi-pushforward}) so $\phi$ is a homotopy equivalence of posets.
\end{proof}

Here is the dual statement. Using standard methods this would be a trivial consequence.

\begin{theorem}[QTA for finite posets, dual form]
\label{thm:QTA-dual}
Let $\phi : P \to Q$ be a map between finite posets. Suppose each Quillen up-fiber
$q \backslash \phi= \phi^{-1}(\up{\{q\}})$
is a contractible subposet of $P$. Then $\phi$ is a homotopy equivalence.
\end{theorem}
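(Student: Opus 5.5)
The plan is to reduce the dual statement to Theorem~\ref{thm:QTA} by passing to opposite posets. We need two facts. First, a finite poset and its opposite have the same homotopy type. Second, a poset map $\phi$ and its opposite $\op{\phi}$ have the same homotopy class, so each is a homotopy equivalence exactly when the other is. Both come from Construction~\ref{construction:order} combined with the identities $\order(P) = \order(\op{P})$ and $\order(\phi) = \order(\op{\phi})$.

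First I check that the hypothesis dualizes correctly. The Quillen up-fiber $q\backslash\phi$ of $\phi : P \to Q$ is, as a set, $\phi^{-1}(\up{\{q\}})$. This is precisely the Quillen down-fiber $\op{\phi}/q$ of $\op{\phi} : \op{P} \to \op{Q}$, regarded as a subposet of $\op{P}$, so as a poset it equals $\op{(q\backslash\phi)}$. I need this opposite poset to be contractible. The order complex of $\op{(q\backslash\phi)}$ coincides with that of $q\backslash\phi$, and by Corollary~\ref{cor:conventions-agree} contractibility of a finite poset in the CL sense is the same as contractibility of its order complex. So every down-fiber of $\op{\phi}$ is contractible, and Theorem~\ref{thm:QTA} shows that $\op{\phi}$ is a homotopy equivalence.

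Next I transfer the conclusion back to $\phi$. Corollary~\ref{cor:conventions-agree} says that $\op{\phi}$ being a homotopy equivalence in the CL sense means $|\order(\op{\phi})| : |\order(\op{P})| \to |\order(\op{Q})|$ is a homotopy equivalence of spaces. This is literally the same map as $|\order(\phi)| : |\order(P)| \to |\order(Q)|$. Applying Corollary~\ref{cor:conventions-agree} again, now to $\phi$, gives that $\phi$ is a homotopy equivalence. A more intrinsic way to see this uses the fact that $|\order(P)|$ carries CL-structures for both $P$ and $\op{P}$. The identity map is then a realization of both poset maps, in the spirit of strategy (S3).

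I expect no step to be a real obstacle. The only point needing care is the bookkeeping: the up-fiber must be identified with the down-fiber of $\op{\phi}$ as posets, up to reversal. One must also confirm that "contractible subposet" and "homotopy equivalence" are invariant under reversal, which rests on Corollary~\ref{cor:conventions-agree} and the reversal-invariance of order complexes. The proof inevitably leaves the pure CL-complex framework, because CL-complexes are not symmetric under reversal, and the order complex is what bridges the gap.
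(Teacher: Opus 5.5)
Your proposal is correct and follows the paper's proof. Both apply Theorem~\ref{thm:QTA} to $\op{\phi}$ and transfer the conclusion back through the identity $|\order(\op{\phi})| = |\order(\phi)|$; your explicit check that $q\backslash\phi$ is the down-fiber $\op{\phi}/q$, and has the same order complex as its reversal, fills in a step the paper leaves tacit. One small correction to your optional aside: the map that realizes both $\phi$ and $\op{\phi}$ is $|\order(\phi)|$ itself, using the cone structures for $P,Q$ and for $\op{P},\op{Q}$, not the identity map.
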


CL-complexes do not help here. Knowledge of the up-fibers doesn't give us any useful information about a generic CL-complex~$X^P$, because the subsystem $(X^a \mid a \in A)$ associated to an up-closed set $A \subseteq P$ is not a subcomplex and we have no real control over it.
Instead we formally invoke the poset-reversal symmetry of order complexes.

\begin{proof}
    The poset map $\phi : P \to Q$ is simultaneously  a poset map $\op{\phi} : \op{P} \to \op{Q}$.
    Then $\op{\phi}$ is a homotopy equivalence by Theorem~\ref{thm:QTA}, meaning that the geometric realization
    \[
    |\op{\phi}| : |\ocl(\op{P})| \longrightarrow |\ocl(\op{Q})|
    \]
    is a homotopy equivalance.
    From the commutative diagram
    \[
    \xymatrix{
    |\order(P)|
        \ar[rr]^{|\phi|}
        \ar@{=}[d]
    &&
    |\order(Q)|
        \ar@{=}[d]
    \\        
    |\order(\op{P})|
        \ar[rr]^{|\op{\phi}|}
    &&
    |\order(\op{Q})|
    }
    \]
    it follows that $|\phi|$ is a homotopy equivalence, hence $\phi$ is a homotopy equivalence of posets.
\end{proof}

\subsection{Relative QTA}
\label{subsec:QTA-relative}

Quillen's Theorem~A has a relative form whose statement is easily guessed. A high-level proof appears in Cisinski \cite[Theorem 2.1.13]{Cisinski_2004}.
Here is the statement for finite posets.

\begin{theorem}[relative QTA for finite posets]
\label{thm:QTA-rel}
    Consider a diagram of finite posets:
    \[
    \xymatrix{
    P
        \ar[r]^-{\phi}
        \ar[dr]_-{\alpha}
    &
    Q
        \ar[d]^-{\beta}
    \\
    &
    R
    }
    \]
    Suppose the induced map of Quillen fibers $\alpha/r \longrightarrow \beta/r$ is a homotopy equivalence for all~$r$.
    Then $\phi$ is a homotopy equivalence.
\end{theorem}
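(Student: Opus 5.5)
The plan is to realize $\phi$ by a continuous map of CL-complexes, push everything forward to $R$, and then prove a cell-by-cell gluing statement for link-cofibrant complexes. First, choose CL-complexes $X^P$ and $Y^Q$ (Proposition~\ref{prop:existence}) and a continuous $\phi$-map $f : X \to Y$ (Theorem~\ref{thm:CL-map}). It suffices to show that $f$ is a homotopy equivalence of spaces. Apply Proposition~\ref{prop:push-square} to the square with $S = R$, $\psi = \id_R$ and vertical maps $\alpha,\beta$. It says that $f$ is an $R$-map between the pushforward $R$-systems $(X^{\alpha/r})$ and $(Y^{\beta/r})$. By Example~\ref{ex:separation-behaves}(5) and Example~\ref{ex:LC}(4), these are link-cofibrant $R$-complexes with the same underlying spaces $X$ and $Y$. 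They need not be cellular, which is why we can only use the link-cofibrant formalism here.

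Next, examine the restrictions of $f$ to the pieces of these $R$-complexes. For any closed $C \leq R$, the set $\alpha^{-1}(C)$ is closed in $P$ and $X^{C}$ (pushforward) equals $X^{\alpha^{-1}(C)}$ (original). By Example~\ref{ex:subcomplex} and Example~\ref{ex:CL}(2), this is a CL-complex for $\alpha^{-1}(C)$, and similarly for $Y$. Moreover $\phi$ maps $\alpha^{-1}(C)$ into $\beta^{-1}(C)$, and $f$ restricts to a continuous $\phi$-map $X^{\alpha^{-1}(C)} \to Y^{\beta^{-1}(C)}$ of CL-complexes. Taking $C = \cone(r)$, this restriction realizes $\alpha/r \to \beta/r$. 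That map is a homotopy equivalence by hypothesis, so by Proposition~\ref{prop:unique-H-class} (comparing with order-complex realizations, Corollary~\ref{cor:conventions-agree}) the restriction $f^r : X^r \to Y^r$ is a homotopy equivalence for every $r \in R$.

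Then prove the following gluing claim: if $f : X^R \to Y^R$ is a continuous $R$-map of link-cofibrant complexes over a finite poset, and each $f^r : X^r \to Y^r$ is a homotopy equivalence, then $f^C : X^C \to Y^C$ is a homotopy equivalence for every closed $C \leq R$. The proof is by induction on $|C|$, the case $C = \emptyset$ being trivial. For nonempty $C$, pick a maximal element $p \in C$ and set $A = C - \{p\}$, which is closed with $A \cap \cone(p) = \link(p)$. By separation (Proposition~\ref{prop:bounded-lattice-map}), $X^C = X^A \cup X^p$ with $X^A \cap X^p = X^{\link(p)}$, and likewise for $Y$. The sets $A$ and $\link(p)$ are closed with fewer elements, so by induction $f^A$ and $f^{\link(p)}$ are homotopy equivalences, while $f^p$ is one by hypothesis. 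The inclusions $X^{\link(p)} \to X^p$ and $Y^{\link(p)} \to Y^p$ are closed cofibrations, so $X^C$ and $Y^C$ are adjunction spaces along cofibrations. The gluing theorem for homotopy equivalences (Brown~\cite[7.5.7]{Brown_2006}) then shows that $f^C$ is a homotopy equivalence. Taking $C = R$, where $X^R = X$ and $Y^R = Y$, gives the result.

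The main obstacle is this gluing step. It needs the precise form of the gluing theorem: a map of pushout squares, one leg of each a closed cofibration, that is a homotopy equivalence on the three corners is a homotopy equivalence on the pushouts. It also needs the identification $X^A \cup X^p$ as such a pushout, which holds because both pieces are closed (the complex is closed) and intersect exactly in $X^{\link(p)}$. The remaining steps are bookkeeping with pushforward systems.
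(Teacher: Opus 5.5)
Your proof is correct and follows the paper's route. You realize $\phi$ by a $\phi$-map of CL-complexes, push forward along $\alpha$ and $\beta$ to get an $R$-map of link-cofibrant complexes that is a homotopy equivalence on each cell, and then glue cell by cell, removing a maximal element at each stage, which is the paper's Theorem~\ref{thm:L-complex-HE}. The differences are only in packaging: you induct over closed subsets of a fixed $R$ and use the adjunction-space gluing theorem (one cofibrant leg), whereas the paper inducts on $|R|$, pushes forward to the V-shaped poset, and applies the two-sided Homotopy Gluing Theorem~\ref{thm:gluing}, whose second cofibration hypothesis comes from link-cofibrancy of that pushforward (Example~\ref{ex:LC}(4)).
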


Theorem~\ref{thm:QTA} is the special case of Theorem~\ref{thm:QTA-rel} by setting $R = Q$ and $\beta = \id_Q$. The proof of the relative statement depends on a deeper understanding of link-cofibrant complexes. We will reduce the theorem to the following:

\begin{theorem}[Generalized Homotopy Gluing Theorem]
    \label{thm:L-complex-HE}
    Let $R$ be a finite poset, let $X^R, Y^R$ be link-cofibrant $R$-complexes, and let $f : X^R \to Y^R$ be a continuous $R$-map which restricts to a homotopy equivalence $f^r : X^r \to Y^r$ for all $r \in R$. Then $f$ is a homotopy equivalence.
\end{theorem}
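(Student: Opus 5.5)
We argue by induction over closed subsets, building the complex one cell at a time exactly as in the proof of Theorem~\ref{thm:carrier}. For $A \leq R$ write $f^A : X^A \to Y^A$ for the restriction; this is well defined because $f$ is an $R$-map. We will show that $f^A$ is a homotopy equivalence for every $A \leq R$, by induction on $|A|$; the case $A = R$ is the theorem. For $A = \emptyset$ both spaces are empty. For the inductive step, write $B = A \cup \{p\}$ with $p$ minimal in $B - A$ (equivalently $p$ maximal in $B$). By separation and minimality,
\[
X^B = X^A \cup X^p,
\qquad
X^A \cap X^p = X^{\link(p)},
\]
and similarly for $Y$. The map $f^B$ is therefore assembled from $f^A$, $f^p$ and $f^{\link(p)}$ on a pushout square of closed subspaces. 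In this square the inclusion $X^{\link(p)} \to X^p$ is a cofibration by link-cofibrancy, and $X^{\link(p)} \to X^A$ is a cofibration by Proposition~\ref{prop:cf-subcomplexes}; the same holds for $Y$.

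Since $\link(p)$ and $A$ are closed sets strictly smaller than $B$, the induction hypothesis applied to the complexes $X^{\link(p)}$, $Y^{\link(p)}$, $X^A$, $Y^A$ (Example~\ref{ex:LC}(2)) shows that $f^{\link(p)}$ and $f^A$ are homotopy equivalences. The map $f^p$ is a homotopy equivalence by hypothesis. We then invoke the classical gluing theorem for homotopy equivalences (Brown \cite[7.5.7]{Brown_2006}): given a map of adjunction spaces $X^A \cup_{X^{\link(p)}} X^p \to Y^A \cup_{Y^{\link(p)}} Y^p$ along closed cofibrations, if the three component maps are homotopy equivalences then so is the induced map. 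Here $X^B$ really is this adjunction space, since $X^A$ and $X^p$ are closed and cover $X^B$. Hence $f^B$ is a homotopy equivalence, which completes the induction.

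The main obstacle is making sure the gluing theorem applies with exactly the hypotheses available. Brown's version needs only one leg of each pushout square to be a closed cofibration, and we have this from link-cofibrancy alone, so no separation axioms on the spaces are required. We must still check that the topology on $X^B$ agrees with the adjunction topology. This follows because $X^A$ and $X^p$ are closed and cover $X^B$, so a set is closed in $X^B$ iff its intersections with both pieces are closed. If one prefers to avoid citing the gluing theorem, the fallback is to reprove it in this setting: construct a homotopy inverse on $Y^B$ by first choosing inverses on $Y^{\link(p)}$, then extending compatibly over $Y^A$ and $Y^p$ using the homotopy extension property. This is the standard but delicate part, and it is where the cofibrancy of both $Y^{\link(p)} \to Y^p$ and the subcomplex inclusions is used.
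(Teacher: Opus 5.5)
Your proof is correct and is essentially the paper's argument: induct on cardinality by removing a maximal element $a$, decompose the space as the closed union $X^{R-\{a\}} \cup X^a$ with intersection $X^{\link(a)}$, use the hypothesis and the induction to make the three restricted maps homotopy equivalences, and conclude by Brown's gluing theorem. The only differences are cosmetic. The paper packages the decomposition as the $\psi$-pushforward to a $V$-complex and uses the closed-union form \cite[7.4.3]{Brown_2006}, whose cofibrancy of both inclusions comes from Example~\ref{ex:LC}(4). You instead use the adjunction-space form, which needs only the link-cofibrancy at each step.
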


This result is an inductive extension of a homotopy gluing theorem which we lift from Brown~\cite{Brown_2006} with some notation changes:

\begin{theorem}[Homotopy Gluing Theorem {\cite[7.4.3]{Brown_2006}}]
\label{thm:gluing}
Suppose $X, Y$ are topological spaces with closed subspaces $X^i, Y^i$ ($i = 0, 1, 2$) such that:
    \begin{align*}
        X &= X^1 \cup X^2
        &
        Y &= Y^1 \cup Y^2
        \\
        X^0 &= X^1 \cap X^2
        &
        Y^0 &= Y^1 \cap Y^2
    \end{align*}
    Suppose $f : X \to Y$ is a continuous map that restricts to continuous maps
    \[
    f^0 : X^0 \to Y^0,
    \quad
    f^1 : X^1 \to Y^1,
    \quad
    f^2 : X^2 \to Y^2
    \]
    and suppose that these maps are homotopy equivalences.
    Finally, suppose that the inclusion maps
    \begin{align*}
    X^0 &\longrightarrow X^1
    &
    Y^0 &\longrightarrow Y^1
    \\
    X^0 &\longrightarrow X^2
    &
    Y^0 &\longrightarrow Y^2
    \end{align*}
    are cofibrations. Then $f : X \to Y$ is a homotopy equivalence.
    \qed
\end{theorem}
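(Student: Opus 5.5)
The plan is to deduce the theorem from two standard facts about cofibrations, as in Brown's Chapter~7. First, since the covers are by two closed sets, $X$ carries the weak topology of the cover, so $X = X^1 \cup_{X^0} X^2$ is the adjunction space (pushout) of $X^1 \leftarrow X^0 \rightarrow X^2$, and similarly for $Y$. Second, I would use the following two ingredients.

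\begin{itemize}
\item[(a)] \emph{Equivalences under a space.} A homotopy equivalence $g : A \to B$ between spaces under $C$ is a homotopy equivalence \emph{under}~$C$ when both $C \to A$ and $C \to B$ are cofibrations. This means there is a homotopy inverse fixing the images of $C$, together with homotopies relative to~$C$.
\item[(b)] \emph{Gluing theorem for adjunction spaces.} Given a cofibration $C \to A$ and a homotopy equivalence $h : C \to D$, the induced map $A \to A \cup_h D$ is a homotopy equivalence.
\end{itemize}

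Fact~(b) is proved from~(a) by a mapping-cylinder argument: replace $h$ by the inclusion of $D$ into the mapping cylinder $M_h$, which is a deformation retract and hence cofibrant.

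With these in hand, I would factor $f$ through two intermediate adjunction spaces, changing one piece of the data at a time:
\[
X^1 \cup_{X^0} X^2
\;\longrightarrow\;
X^1 \cup_{X^0} Y^2
\;\longrightarrow\;
Y^1 \cup_{Y^0} Y^2 = Y .
\]
Here $X^1 \cup_{X^0} Y^2$ is formed by attaching $Y^2$ to $X^1$ along the composite $X^0 \to Y^0 \to Y^2$.

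The first map restricts to the identity on $X^1$ and to $f^2$ on $X^2$. Its homotopy inverse is built from a homotopy inverse of $f^2$ chosen under $X^0$, using~(a): the inclusions $X^0 \to X^2$ and $X^0 \to Y^0 \to Y^2$ are cofibrations. For the second of these, $X^0 \to Y^0$ need not itself be a cofibration, so I would first replace $Y^0$, $Y^2$ by mapping cylinders, which is harmless up to homotopy equivalence. Homotopies relative to $X^0$ glue with constant homotopies on $X^1$, which shows the first map is a homotopy equivalence.

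The second map is handled symmetrically. Its source is $X^1 \cup_{X^0} Y^2$, and comparing it with $Y^1 \cup_{Y^0} Y^2$ amounts to replacing $X^1$ by $Y^1$ and the attaching space $X^0$ by $Y^0$. I would control this using~(b) applied to $f^0$ together with~(a) for $f^1$.

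The main obstacle is this bookkeeping: ensuring that every inclusion along which something is glued is genuinely a cofibration. The naive composite $X^0 \to Y^2$ is not known to be cofibrant, so that is the point where the mapping-cylinder replacement is needed. One must then check that the resulting adjunction spaces are homotopy equivalent to the originals, which is again an instance of~(b).

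Once this is in place, the conclusion follows because a composite of homotopy equivalences is a homotopy equivalence. In the paper's setting it is reasonable simply to cite Brown's 7.4.3 for the whole argument, as the statement does.
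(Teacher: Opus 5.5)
The paper gives no argument of its own here: it quotes Brown~\cite[7.4.3]{Brown_2006} and remarks only that Brown's proof is subtle. Your outline is therefore an independent route, and a sound one: the classical reduction to (a) and (b), changing one piece of the pushout at a time. Compared with the bare citation it is self-contained and constructive. Once tightened as below, it also shows that only the cofibrations $X^0\to X^1$ and $Y^0\to Y^1$ are needed. The citation buys brevity, which is all the paper needs, since it uses the result only as input to Theorem~\ref{thm:L-complex-HE}.

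\emph{First step.} The mapping-cylinder detour is unnecessary. Since $X^0\to X^1$ is a cofibration, Proposition~\ref{prop:cf-MV} makes $X^2\to X$ a closed cofibration. Pasting pushouts gives $X\cup_{f^2}Y^2=X^1\cup_{X^0}Y^2$, so your first map is literally an instance of (b).

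\emph{Second step.} It is not symmetric to the first. Also, (a) cannot be applied to $f^1$ itself, for the same reason you flagged for $f^2$: as a map under $X^0$, its structure map $X^0\to Y^1$ factors through $f^0$ and need not be a cofibration. Instead:
\begin{itemize}
\item Write the source as $(X^1\cup_{X^0}Y^0)\cup_{Y^0}Y^2$.
\item By (b) applied to $f^0$, the map $X^1\to X^1\cup_{X^0}Y^0$ is a homotopy equivalence. By two-out-of-three, so is the induced map $g\colon X^1\cup_{X^0}Y^0\to Y^1$.
\item Now $g$ is a map under $Y^0$ whose two structure maps are cofibrations; the first is a pushout of $X^0\to X^1$. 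So (a) applies to $g$.
\item Gluing with $\mathrm{id}_{Y^2}$ relative to $Y^0$ finishes.
\end{itemize}
Gluing homotopies across these adjunction spaces is legitimate because $-\times[0,1]$ preserves the quotients.

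\emph{Justification of (b).} Your phrase ``a deformation retract and hence cofibrant'' is a non sequitur: deformation retracts need not be cofibrant. The inclusion of the target into a mapping cylinder is a cofibration for a different reason, namely that it is a pushout of $C\times\{1\}\to C\times[0,1]$. Fact (b) is standard and is better cited, or proved directly by homotopy extension.
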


The proof in~\cite{Brown_2006} is more subtle than the clean theorem statement might suggest.

\begin{remark}
\label{rem:gluing-V}
Let $V = (1 > 0 < 2)$ denote the V-shaped poset of Example~\ref{ex:V-complex}. Then the Homotopy Gluing Theorem~\ref{thm:gluing} is the special case of Theorem~\ref{thm:L-complex-HE} with $R = V$.
%
\end{remark}

\begin{proof}[Proof of Theorem~\ref{thm:L-complex-HE}]
        We proceed by induction on the cardinality of $R$. The result is trivial if $R$ is empty.
    Suppose that $R$ is not empty and that the result has been proved for all posets of smaller cardinality.
    Let $a \in R$ be a maximal element and let $V$ be the V-shaped poset. 
    We have a poset map $\psi : R \to V$ defined as follows:
    \[
    \psi(r) =
    \begin{cases}
        1 &\text{if $r = a$}
        \\
        0 &\text{if $r \in \link(a)$}
        \\
        2 &\text{if $r \in R - \cone(a)$}
    \end{cases}
    \]
    The Quillen fibers of this map are:
    \begin{align*}
        \psi/1 &= \cone(a)
        &
        \psi/0 &= \link(a)
        &
        \psi/2 &= R - \{a\}
    \end{align*}
    Let $X^V, Y^V$ be the $\psi$-pushforwards of $X^R, Y^R$.
    These are link-cofibrant by Example~\ref{ex:LC}~(4), and we have:
    \begin{align*}
        X^1 &= X^{\cone(a)} = X^a
        &
        X^0 &= X^{\link(a)}
        &
        X^2 &= X^{R - \{a\}}
        \\
        Y^1 &= Y^{\cone(a)} = Y^a
        &
        Y^0 &= Y^{\link(a)}
        &
        Y^2 &= Y^{R - \{a\}}
    \end{align*}
    The map $f^1 = f^a : X^a \to Y^a$ is a homotopy equivalence by assumption, while $f^0, f^2$ are homotopy equivalences by the inductive hypothesis.
    The Homotopy Gluing Theorem~\ref{thm:gluing} implies that $f$ is a homotopy equivalance. 
\end{proof}

Proposition~\ref{prop:push-square}, with $R = S$ and $\psi = \id_R$, provides the setting for the following proof.

\begin{proof}[Proof of Theorem~\ref{thm:QTA-rel}]
    Let $f : X^P \to Y^Q$ be a continuous $\phi$-map of CL-complexes.
    Let $X^R, Y^R$ denote the $\alpha$- and $\beta$-pushforwards of~$X^P, Y^Q$.
    Then $f : X^R \to Y^R$ is an $R$-map of link-cofibrant $R$-complexes. Moreover $X^r = X^{\alpha/r}$ and $Y^r = Y^{\beta/r}$ have the structure of CL-complexes for $\alpha/r$, $\beta/r$ and the restriction  $f^r : X^r \to Y^r$ realizes the induced map $\alpha/r \to \beta/r$. 
    By hypothesis, each $f^r$ is a homotopy equivalence, so
    $f$ is a homotopy equivalence by Theorem~\ref{thm:L-complex-HE}.
\end{proof}

A result very similar to Theorem~\ref{thm:L-complex-HE} is given by tom~Dieck \cite[Theorem~1]{tomDieck_1971} for a map of topological spaces over a base space with a numerable cover (a cover that admits a locally finite partition of unity). 
The natural way to cast our result in that form doesn't  work, because the canonical cover of a CL-complex is not, in general, numerable: closed covers need not admit partitions of unity (consider $[0,2] = [0,1] \cup [1,2]$).

\subsection{Product fiber theorem}
\label{subsec:product-fiber}

In this section, mostly for illustrative purposes, we identify a class of CL-complexes that occur naturally when considering products of posets.

Let $P, Q$ be posets and let $A \leq P \times Q$ be a closed subset of their product. Then $A$ can be expressed in the form
\[
A = \bigcup_{q \in Q} P_q \times \{q\}
\]
for a family of closed sets $(P_q)$ such that $P_{q'} \leq P_q$ whenever $q \leq q'$.
Here is a standard consequence of QTA for posets. We use the unrestricted version \cite[Theorem 2.2]{Walker_1981} rather the finite-only Theorem~\ref{thm:QTA}.

\begin{proposition}[product fiber theorem]
\label{prop:product-fiber}
    In the situation above, suppose every `point fiber' $P_q$ is contractible. Then the projection $\pi : A \to Q$ is a homotopy equivalence.
\end{proposition}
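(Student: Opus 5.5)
The plan is to apply Quillen's Theorem~A to the projection $\pi : A \to Q$, in the dual form of Theorem~\ref{thm:QTA-dual} (or Walker's unrestricted version~\cite[Theorem 2.2]{Walker_1981} if $P,Q$ are infinite). The work is to show that every Quillen up-fiber $q \backslash \pi$ is contractible by comparing it with the point fiber $P_q$.

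The down-fibers look like the wrong tool. The fiber $\pi/q = \bigcup_{q' \leq q} P_{q'} \times \{q'\}$ involves the sets $P_{q'}$ with $q' \leq q$, which are \emph{larger} than $P_q$. There is then no evident order-preserving way to push such a fiber onto $P_q \times \{q\}$. So I will work with the up-fibers instead:
\[
q \backslash \pi = \{ (p,q') \in A \mid q' \geq q \} = \bigcup_{q' \geq q} P_{q'} \times \{q'\}.
\]
The monotonicity $P_{q'} \leq P_q$ for $q \leq q'$ makes this a good choice.

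First, I define the inclusion $i : P_q \to q\backslash\pi$ by $i(p) = (p,q)$, and the map $\rho : q\backslash\pi \to P_q$ by $\rho(p,q') = p$. Here $\rho$ is well defined because $p \in P_{q'} \subseteq P_q$, and both maps are clearly order-preserving. Second, I check that $\rho i = \id_{P_q}$. Third, I note that $i\rho(p,q') = (p,q) \leq (p,q')$, so $i\rho \leq \id_{q\backslash\pi}$. Comparable poset maps are homotopic: this is Example~\ref{ex:concise-comparable} in the finite case, and the standard order-complex fact in general, since both maps then send each simplex into a common simplex of the order complex. Hence $i\rho \simeq \id$, and $q\backslash\pi$ is homotopy equivalent to $P_q$, which is contractible by hypothesis.

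With all up-fibers contractible, the dual QTA yields that $\pi$ is a homotopy equivalence. I expect no serious obstacle; the only real point is choosing the up-fibers rather than the down-fibers. The mild caveat is the infinite case, where one must invoke Walker's general theorem together with the general fact that comparable maps are homotopic, rather than the CL-complex versions proved here.
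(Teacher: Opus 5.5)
Your argument is the paper's proof. The paper also applies the dual form of QTA (via Walker's unrestricted version) and retracts each up-fiber $q\backslash\pi$ onto $P_q\times\{q\}$ by $(p,q')\mapsto(p,q)$. You make explicit the inequality $i\rho\leq\id$, which the paper leaves implicit and which is what turns this retraction into a homotopy equivalence. In the finite case, citing Example~\ref{ex:concise-comparable} completes the argument.

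One correction for the infinite case: the reason you give for ``comparable maps are homotopic'' is false. If $f\leq g$, the simplicial maps $\order(f),\order(g)$ need not be contiguous, and this already fails for your maps. Take $q<q_1$ and $p_0<p_1$ in $P_{q_1}$. The chain $(p_0,q_1)<(p_1,q_1)$ in $q\backslash\pi$ is sent by $i\rho$ to $\{(p_0,q),(p_1,q)\}$ and by $\id$ to itself. But $(p_1,q)$ and $(p_0,q_1)$ are incomparable, so the union is not a simplex.

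The fact itself is true but needs a different justification. One route is a common \emph{contractible} carrier rather than a common simplex: since $i\rho(c)\leq c\leq\max\sigma$, both maps send $|\sigma|$ into $|\order(\cone(\max\sigma))|$. This is a cone, and it is monotone in $\sigma$, so Walker's Carrier Lemma applies without any finiteness assumption. The other route is the prism argument: $\{f(c_0),\dots,f(c_j),g(c_j),\dots,g(c_k)\}$ is a chain for each $j$, which is to say that the poset map $P\times(0<1)\to Q$ combining $f$ and $g$ realizes the homotopy.
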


Since contractible implies non-empty, the hypothesis implies that $\pi$ is onto.

\begin{proof}
    We use the dual form of QTA for posets. For all $q \in Q$, the Quillen up-fiber
    \[
    q \backslash \pi
    =
    \bigcup_{q' \geq q} P_{q'} \times \{q'\}
    \]
    deformation retracts to the subposet $P_{q} \times \{q\}$ through the involution $(p,q') \mapsto (p,q)$. This subposet is isomorphic to $P_{q}$ which is contractible by hypothesis.
\end{proof}

What about the Quillen down-fibers (also known simply as Quillen fibers)?

\begin{proposition}
\label{prop:product-fiber-down}
    In the situation above, each Quillen fiber
    \[
    \pi / q
    =
    \bigcup_{q' \leq q} P_{q'} \times \{q'\}
    \]
    is contractible.
\end{proposition}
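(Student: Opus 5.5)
The plan is to apply Proposition~\ref{prop:product-fiber} (equivalently, the dual form of QTA) to the restricted situation over $\cone(q)$ instead of over all of~$Q$. Fix $q \in Q$ and note that
\[
\pi/q = A \cap \big(P \times \cone(q)\big) = \bigcup_{q' \leq q} P_{q'} \times \{q'\},
\]
which is a closed subset of $P \times \cone(q)$. The family of point fibers $(P_{q'} \mid q' \in \cone(q))$ is exactly the family describing this closed set, and the point fibers are contractible by hypothesis.

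Following the proof of Proposition~\ref{prop:product-fiber}, I will check that the projection $\pi|_{\pi/q} : \pi/q \to \cone(q)$ has contractible Quillen up-fibers. Fix $q'' \leq q$. The up-fiber is
\[
\bigcup_{q'' \leq q' \leq q} P_{q'} \times \{q'\}.
\]
Define $r(p,q') = (p,q'')$ on this up-fiber. It lands in the up-fiber because $q'' \leq q'$ gives $P_{q'} \leq P_{q''}$, so $p \in P_{q''}$. It is order-preserving, and it satisfies $r \leq \id$.

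Let $\iota$ be the inclusion of $P_{q''} \times \{q''\}$. Then $\iota r \leq \id$, and $r\iota = \id$. By Example~\ref{ex:concise-comparable}, comparable maps are homotopic, so the up-fiber is homotopy equivalent to $P_{q''} \times \{q''\} \cong P_{q''}$. This is contractible by hypothesis.

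Theorem~\ref{thm:QTA-dual} now shows that $\pi/q \to \cone(q)$ is a homotopy equivalence. In the infinite case I would use Walker's unrestricted version instead, as in Proposition~\ref{prop:product-fiber}. Finally, $\cone(q)$ has largest element~$q$, so it is contractible by Example~\ref{ex:concise-down-cone}. Hence $\pi/q$ is contractible.

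The only delicate step is the direction of the inclusion $P_{q'} \leq P_{q''}$, which is what makes the retraction $r$ go downward and so stay inside the up-fiber. Everything else is a formal consequence of earlier results. This route again uses the dual form of QTA, and hence the poset-reversal symmetry of order complexes, rather than a direct CL-complex argument. That fits the paper's remark that the natural CL-complex verification seemingly depends on results that already imply the theorem.
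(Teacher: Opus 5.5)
Your proposal is correct and is essentially the paper's proof. The paper applies Proposition~\ref{prop:product-fiber} directly to $\pi/q \leq P \times \cone(q)$, giving $\pi/q \simeq \cone(q)$, which is contractible; you instead re-derive that proposition's up-fiber retraction argument over $\cone(q)$ rather than citing it, which is fine, though Example~\ref{ex:concise-comparable} and Example~\ref{ex:concise-down-cone} are stated only for finite posets, so in the infinite case those two steps also need the standard order-complex versions.
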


Here there is no obvious retraction to $P_q \times \{q\}$ because the sets $P_{q'}$ are larger than $P_q$.

\begin{proof}
    Proposition~\ref{prop:product-fiber} for $\pi/q \leq P \times \cone(q)$ gives $\pi/q \simeq \cone(q)$ which is contractible.
\end{proof}

This puts us in a curious position. Since the Quillen fibers are contractible, this would give us a second proof of Proposition~\ref{prop:product-fiber} by (non-dualized) QTA for posets, except that we used Proposition~\ref{prop:product-fiber} to prove that the Quillen fibers are contractible.
Perhaps the right perspective, avoiding circular reasoning, is to regard the contractibility of the Quillen fibers as part of the overall architecture of the homotopy equivalence of Proposition~\ref{prop:product-fiber}.

In the finite case, we can use Proposition~\ref{prop:product-fiber-down} to realize the homotopy equivalence $\pi : A \to Q$ as a homeomorphism of CL-complexes. Again this does not give an independent proof of the homotopy equivalence, only a retrospective corroboration:

\begin{example}
\label{ex:product-fiber-CL}
Let $P, Q, A, \pi$ be as in Proposition~\ref{prop:product-fiber} and suppose that $P, Q$ are finite. Let $X^P, Y^Q$ be CL-complexes, and consider the sub-CL-complex $Z^A = (X \times Y)^A$ of $X^P \times Y^Q$.
Certainly the $\pi$-pushforward $Z^Q$ is a link-cofibrant complex. 
Moreover, each $Z^q = Z^{\pi/q}$ has the structure of a CL-complex for $\pi/q$ which is contractible by Proposition~\ref{prop:product-fiber-down}. It follows that $Z^Q$ is a CL-complex, and the identity homeomorphism $\id_Z : Z^A \to Z^Q$ is a $\pi$-map (Example~\ref{ex:phi-pushforward}) of CL-complexes.
\end{example}

The cells of $Z^Q$ provide a good example of non-cell-like contractible spaces:
\[
Z^q
=
\bigcup_{q' \leq q}
X^{P_{q'}} \times Y^{q'}
\]
The `most important' part $X^{P_q} \times Y^q$ is a product of contractible spaces and therefore contractible. Other contractible parts $X^{P_{q'}} \times Y^{q'}$ are glued onto it at the boundary, like antennae extending in the $X$ direction. The whole thing is contractible by Proposition~\ref{prop:product-fiber-down} but not entirely obviously.

\begin{remark}
    Since
    \[
    Z^q
    =
    \bigcup_{q' \leq q}
    X^{P_{q'}} \times Y^{q'}
    =
    (X \times Y)^A \cap (X^P \times Y^q)
    =
    Z^A \cap \proj_Y^{-1}(Y^q)
    \]
    it follows that $Z^Q$ is not only the $\pi$-pushforward of~$Z^A$ but also the $f$-pullback of $Y^Q$, where $f : Z^A \to Y^Q$ is the restriction of the projection map $\proj_Y : X^P \times Y^Q \to Y^Q$.
    This is a rare situation where an $f$-pullback has a good structure.
\end{remark}

\begin{example}[Dowker's theorem]
\label{ex:Dowker}
    Let $R \subseteq I \times J$ be a binary relation between sets $I,J$. Define $A \leq \Delta(I) \times \Delta(J)$ by
    \[
    A = \{ (\sigma, \tau) \mid
    \sigma \in \Delta(I) ,\,
    \tau \in \Delta(J) ,\,
    \sigma \times \tau \subseteq R
    \}
    \]
    and let $D_I = D_I(R)$ and $D_J = D_J(R)$
    denote the projections of $A$ onto $\Delta(I)$ and $\Delta(J)$.
    These simplicial complexes are the Dowker complexes~\cite{Dowker_1952} of the binary relation~$R$. It is not difficult to show that both projections
    \[
    \xymatrix{
    D_I
    &
    A
        \ar[l]
        \ar[r]
    &
    D_J
    }
    \]
    have contractible point fibers and therefore are homotopy equivalences by Proposition~\ref{prop:product-fiber}.
    This is Dowker's theorem in poset language.
    When $I,J$ are finite, we can realize this diagram as a diagram of CL-complexes in various ways. The following diagram incorporates the two realizations of~$A$ from Example~\ref{ex:prod-complexes}.
    All arrows are CL-maps and homotopy equivalences.
    \[
    \xymatrix{
    &
    |\simp(A)|^A
        \ar[dl]
        \ar[d]^{\xi}
        \ar[dr]
    \\
    |D_I|^{D_I}
    &
    |\prodc(A)|^A
        \ar[l]
        \ar[r]
    &
    |D_J|^{D_I}
    }
    \]
    Note that $|\simp(A)|$ is the `{rectangle complex}' in Brun and Salbu's proof of Dowker's Theorem~\cite{Brun_Salbu_2022}, while $|\prodc(A)|$ is the `{relational product}' in one of the proofs by Yoon~\cite{Yoon_2026}.
    A third option is to use pushforwards:
    \[
    \xymatrix{
    |\prodc(A)|^{D_I}
    &
    |\prodc(A)|^{A}
        \ar[l]
        \ar[r]
    &
    |\prodc(A)|^{D_J}
    }
    \]
    By Example~\ref{ex:product-fiber-CL}, it follows that this is a diagram of CL-complexes. From this point of view, Dowker's theorem holds because the posets $A, D_I, D_J$ are realizable by the same space.
\end{example}

\section{Nerve Theorems}
\label{sec:nerve}

The Nerve Theorem refers to a family of results \cite{Leray_1945,Borsuk_1948,Weil_1952,McCord_1967} that identify the homotopy type (or weak homotopy type, or homology) of a space covered by subsets whose nonempty finite intersections are contractible (or weakly contractible, or homologically trivial) with the homotopy type (or weak homotopy type, or homology) of a simplicial complex, introduced by Alexandrov~\cite{Alexandroff_1928}, called the \emph{nerve} of the covering.
For a comprehensive overview of the many different versions of the Nerve Theorem, I recommend Bauer et~al.~\cite{Bauer_K_R_R_2023}.

In this section, I give a short CL-complex proof of a suitably tailored version of the Nerve Theorem. This adapts immediately to a proof of Sheehy's Multicover Nerve Theorem~\cite{Sheehy_2012}.
%

\subsection{Tame classes}
\label{subsec:tame}

Define a \emph{tame class} to be a lattice\footnote{%
A family of sets that is closed under pairwise intersections and unions.%
} $\TT$ of closed sets in a topological space such that each inclusion map between elements of $\TT$ is a cofibration.

\begin{example}
The following lattices of closed sets are tame classes:

(1)
Subcomplexes of a simplicial complex or CW-complex.

(2)
Compact semi-algebraic subsets of~$\Rr^n$.

(3)
Compact definable subsets of~$\Rr^n$ in an o-minimal theory~\cite{vdDries_1998}.

(4) Finite unions of compact convex sets in~$\Rr^n$.

\end{example}

\begin{proof}
(1) is covered by Example~\ref{ex:LC}(1) and Proposition~\ref{prop:cf-subcomplexes}.

(2), (3) reduce to (1) by the triangulation theorem \cite[Chapter~8, (2.9)]{vdDries_1998}.

(4) follows from the theory of ANR spaces: a finite union of compact convex sets is an ANR space and a closed inclusion of ANR spaces is a cofibration \cite[A.6.3, A.6.6, A.6.7]{Fritsch_Piccinini_1990}.
\end{proof}

\subsection{Covers}
\label{subsec:covers}

Let $\UU = (U_i \mid i \in I)$ be a family of sets indexed by a set~$I$. 
Writing
\[
\uall = \bigcup_{i \in I} U_i
\]
we think of $\UU$ as a cover of $\uall$.
We define some related subsets:
\begin{alignat*}{2}
    U^\sigma &= \bigcap_{i \in \sigma} U_i
    &\qquad&
    \text{for $\sigma \in \Delta(I)$ \;
    ($\Leftrightarrow$
    $\sigma$ a nonempty finite subset of~$I$)}
    \\
    U^A &= \bigcup_{\sigma \in A} U^\sigma
    &&
    \text{for $A \subseteq \Delta(I)$}    
\end{alignat*}
The `Venn diagram' of intersections of the $U_i$ is encoded by the \emph{nerve} of~$\UU$~\cite{Alexandroff_1928}:
\begin{equation}
\label{eq:nerve}
N
=
N_{\UU}
=
\nerve(\UU)
=
\{
\sigma \in \Delta(I)
\,\mid\,
U^{\sigma} \ne \emptyset
\}    
\end{equation}
This is a simplicial complex because $\sigma \leq \tau$ implies $U_\tau \subseteq U_\sigma$. We also view it as a poset. We adopt the convention that $\nerve(\UU)$ indicates the simplicial complex while $N$ or $N_{\UU}$ indicates the poset.

Now suppose that $\UU$ is a family of subsets of a topological space. 

\begin{definition}
\label{def:good-cover}
     We call $\UU$ a \emph{good cover} (of~$\uall$) if every $U^\sigma$ is contractible or empty.
\end{definition}

\begin{definition}
\label{def:tame-cover}
    We call $\UU$ a \emph{tame cover} if the elements of~$\UU$ belong to some tame class~$\TT$. 
\end{definition}

This last condition is equivalent to a set of requirements intrinsic to the cover itself, without reference to any particular tame class. In practice, it is probably easier to invoke a known tame class rather than test the intrinsic requirements on an \emph{ad hoc} basis.

\begin{proposition}
\label{prop:tame}
    A family of subsets $\UU = (U_i \mid i \in I)$ of a topological space  is a tame cover iff every $U_i$ is closed and $U^A \to U^B$ is a cofibration for all $A \subseteq B \subseteq \Delta(I)$ with $A,B$ finite.
\end{proposition}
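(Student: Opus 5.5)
We prove the two directions separately. The forward direction is routine; the reverse direction amounts to exhibiting a suitable tame class.

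\emph{Forward direction.} Suppose $\UU$ is tame, with every $U_i$ in some tame class $\TT$. Since $\TT$ is a lattice, it contains every finite intersection $U^\sigma$ for $\sigma \in \Delta(I)$. It then contains every finite union $U^A$ for finite $A \subseteq \Delta(I)$, with $U^\emptyset = \emptyset$ allowed. Here we adopt the convention that $\emptyset \in \TT$, or note that the empty inclusion is trivially a cofibration. Every element of $\TT$ is closed, so each $U_i$ is closed. Finally, for finite $A \subseteq B$ we have $U^A \subseteq U^B$, both in $\TT$, so by the definition of a tame class this inclusion is a cofibration.

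\emph{Reverse direction.} Assume each $U_i$ is closed and every inclusion $U^A \to U^B$ is a cofibration for finite $A \subseteq B \subseteq \Delta(I)$. The natural candidate for a tame class is
\[
\TT = \{ U^A \mid A \subseteq \Delta(I),\ A \text{ finite} \}.
\]
Each $U^\sigma$ is a finite intersection of closed sets, and each $U^A$ is a finite union of these, so every element of $\TT$ is closed. $\TT$ contains each $U_i = U^{\{\{i\}\}}$.

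Closure under unions is immediate, since $U^A \cup U^B = U^{A \cup B}$.

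Closure under intersections follows from distributivity:
\[
U^A \cap U^B = \bigcup_{\sigma \in A,\, \tau \in B} U^\sigma \cap U^\tau = \bigcup_{\sigma \in A,\, \tau \in B} U^{\sigma \cup \tau} = U^{C},
\]
where $C = \{\sigma \cup \tau \mid \sigma \in A,\ \tau \in B\}$ is again finite.

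It remains to check that every inclusion between elements of $\TT$ is a cofibration. This is the only subtle point: an inclusion $U^A \subseteq U^{B}$ of sets need not come from an inclusion $A \subseteq B$ of index families. To handle it, write $U^{A} = U^{A} \cap U^{B} = U^{C}$ with $C$ as above; this holds because $U^A \subseteq U^B$. Then $U^{C} \to U^{C \cup B} = U^B$ is induced by the index inclusion $C \subseteq C \cup B$, where $U^{C \cup B} = U^C \cup U^B = U^B$. By hypothesis this inclusion is a cofibration. Since it coincides with the given inclusion $U^A \to U^B$, we conclude that $\TT$ is a tame class and $\UU$ is a tame cover.
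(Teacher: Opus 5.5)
Your proof is correct and follows essentially the same route as the paper: you identify the lattice generated by $\UU$ as $\{U^A \mid A \subseteq \Delta(I),\ A \text{ finite}\}$, and you reduce an arbitrary inclusion $U^A \subseteq U^B$ to one induced by an inclusion of index families. The detour through $C$ is harmless but unnecessary, since $U^A \subseteq U^B$ already gives $U^{A \cup B} = U^A \cup U^B = U^B$, so the given inclusion is induced directly by $A \subseteq A \cup B$, which is exactly the paper's step.
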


\begin{proof}
    Indeed $\UU$ is a tame cover iff the smallest lattice containing the elements of~$\UU$ is a tame class. We can identify this smallest lattice as $\{U^A \mid \emptyset \ne A \subseteq \Delta(I), \, \text{$A$ finite} \}$. When is it a tame class? The elements are closed sets iff every $U_i$ closed. The inclusions are cofibrations iff $U^A \to U^B$ is a cofibration whenever $U^A \subseteq U^B$. Ostensibly this is a broader quantification than `for all $A \subseteq B$', but in fact it is not more general, because when $U^A \subseteq U^B$ we have $U^B = U^B \cup U^A = U^{B \cup A}$ so the inclusion $U^A \to U^B$ can be written as $U^A \to U^{B \cup A}$.
\end{proof}

\subsection{The nerve theorem}
\label{subsec:nerve}

Here is the CL-complex version of the Nerve Theorem.

\begin{theorem}[nerve theorem]
\label{thm:nerve}
    Let $\UU = (U_i \mid i \in I)$ be a good tame cover in a topological space, with $I$ finite.
    Then $\uall$ is the underlying space of a CL-complex for $\op{N}$, and therefore homotopy equivalent to $|\nerve(\UU)|$.
\end{theorem}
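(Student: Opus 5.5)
The plan is to equip $X = \uall$ with the $\op{N}$-system $X^\sigma = U^\sigma$ for $\sigma \in N$. This is monotone for $\op{N}$: if $\tau \leq \sigma$ in $\op{N}$, meaning $\sigma \subseteq \tau$ as simplices, then $U^\tau \subseteq U^\sigma$. We then verify the three CL-complex conditions in turn: separation, closedness with link-cofibrancy, and cellularity. Once $X^{\op{N}}$ is known to be a CL-complex, the homotopy equivalence with $|\nerve(\UU)|$ follows from Proposition~\ref{prop:unique-H-type} together with Construction~\ref{construction:order}. Indeed $|\order(\op{N})|^{\op{N}}$ is also a CL-complex for $\op{N}$, and $|\order(\op{N})| = |\order(N)|$ is homeomorphic to $|\nerve(\UU)|$ by barycentric subdivision. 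Alternatively, we can invoke Remark~\ref{rem:sc-homotopy-type} and Corollary~\ref{cor:conventions-agree}.

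\emph{Separation.} For $x \in \uall$, let $\sigma_x = \{ i \in I \mid x \in U_i \}$. This set is nonempty because $x$ is covered, and finite because $I$ is, so $\sigma_x \in N$. The indicator set $(\op{N})_x$ is $\{\sigma \in N \mid \sigma \subseteq \sigma_x\}$. In the order of $\op{N}$, the element $\sigma_x$ lies below every member of this set, so it is the smallest element. Hence the system is separated and covers $\uall$.

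\emph{Cellularity and closedness.} Each $X^\sigma = U^\sigma$ is nonempty for $\sigma \in N$, so it is contractible by goodness. It is also closed, being a finite intersection of closed sets; tame classes consist of closed sets. Since $N$ is finite, this suffices for the system to be closed.

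\emph{Link-cofibrancy.} This is the step needing care. The cone of $\sigma$ in $\op{N}$ is $\{\tau \in N \mid \tau \supseteq \sigma\}$, and the link is $\{\tau \in N \mid \tau \supsetneq \sigma\}$. Therefore $X^{\link(\sigma)} = \bigcup_{j \notin \sigma} U^{\sigma \cup \{j\}}$, and $X^{\cone(\sigma)} = U^\sigma$. The empty terms in the union can be dropped: such $\sigma \cup \{j\}$ are not in $N$, but they contribute nothing. Both spaces are of the form $U^A \subseteq U^B$ with $A, B$ finite subsets of $\Delta(I)$. For instance, take $A = \{\sigma \cup \{j\} \mid j \notin \sigma\}$ and $B = A \cup \{\sigma\}$, so that $U^B = U^\sigma$. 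Proposition~\ref{prop:tame} then says that the inclusion is a cofibration. Equivalently, both spaces lie in the lattice generated by $\UU$ inside the tame class $\TT$, so the inclusion is a cofibration by the definition of a tame class.

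The case where $X^{\link(\sigma)}$ is empty is covered by the same argument, or it is trivial, since $\emptyset \to U^\sigma$ is a cofibration. I expect no genuine obstacle. The only point to watch is the poset reversal: the cover naturally indexes an $\op{N}$-system, not an $N$-system. This is exactly why the order complex's dual CL-structure is needed to relate $\uall$ to $|\nerve(\UU)|$.
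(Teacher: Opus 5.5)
Your proof is correct and follows the paper's argument: you use the same $\op{N}$-system $(U^\sigma)$ on $\uall$, with separation via the smallest element $\{i \mid x \in U_i\}$, cellularity from goodness, link-cofibrancy from tameness, and the comparison through $|\order(\op{N})| = |\order(N)|$. The only difference is that you write out the link computation, the empty-link case, and the final homotopy equivalence in more detail than the paper does.
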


\begin{proof}
    The $\op{N}$-system $(U^\sigma \mid \sigma \in N$) defines a CL-complex structure on~$\uall$: it is separated because $\op{N}_x = \{ \sigma \in N \mid x \in U^\sigma \}$ has the $\op{N}$-smallest\footnote{Largest in $N$.} element $m(x) = \{ i \in I \mid x \in U_i \}$ for all $x \in \uall$, cellular since $\UU$ is good, and link-cofibrant since $\UU$ is tame.
    Thus $\uall$ realizes the homotopy type of $\op{N}$. This is the same as the homotopy type of~$N$, which is realized by $|\nerve(\UU)|$.
\end{proof}

    The finiteness requirement is inescapable when we are using closed sets as above. Consider the family $\UU = \{ \{t\} \mid t \in \Rr \}$ of singletons in~$\Rr$, contained in the tame class of finite subsets of~$\Rr$. Then $\uall = \Rr$ whereas $|\nerve(\UU)|$ is a discrete uncountable space.

With some bookkeeping we can confirm the naturality of the homotopy equivalence. The map in one direction can be factored through order complexes as follows:
\[
    \xymatrix{
    \uall
        \ar[r]
    &
    |\order(\op{N})|
        \ar@{=}[r]
    &
    |\order(N)|
        \ar[r]
    &
    |N|
    }    
\]
The first arrow is any $\op{N}$-map of CL-complexes, while the second arrow is any $N$-map of CL-complexes.

Here is a sample statement of naturality (with respect to a category of finite covers that has been left implicit but which the reader can easily infer).

\begin{proposition}[nerve theorem naturality]
\label{prop:nerve-natural}
    Let $\UU = (U_i \mid i \in I)$ and $\VV = (V_j \mid j \in J)$ be good tame covers in topological spaces $X,Y$ with $I, J$ finite. Let $f : X \to Y$ be a continuous map.
    Suppose $\UU$ is $f$-subordinate to $\VV$ in the sense that there exists a function $\alpha : I \to J$ such that $f(U_i) \subseteq V_{\alpha(i)}$ for all $i \in I$.
    Then the diagram
    \[
    \xymatrix{
    \uall
        \ar[rrr]_{\simeq}^{\mathbf{Theorem~\ref{thm:nerve}}}
        \ar[d]_-{f}
    &&&
    |N_\UU|
        \ar[d]^-{|\phi_\alpha|}
    \\
    \vall
        \ar[rrr]_{\simeq}^{\mathbf{Theorem~\ref{thm:nerve}}}
    &&&
    |N_\VV|
    }
    \]
    commutes in the homotopy category. Here $\phi_\alpha : N_{\UU} \to N_{\VV}$ is the poset map induced by~$\alpha$ (compare Example~\ref{ex:poset-system-maps}(2)).
\end{proposition}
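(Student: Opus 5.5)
The plan is to factor each horizontal arrow through order complexes, as in the discussion after Theorem~\ref{thm:nerve}, and prove that each of the two resulting squares commutes up to homotopy. Each square will be handled by Theorem~\ref{thm:CL-map}: two continuous $\psi$-maps between the same pair of CL-complexes are homotopic. Concretely, write $g_\UU : \uall \to |\order(\op{N_\UU})|$ for an $\op{N_\UU}$-map of CL-complexes. Here the domain carries the structure from the proof of Theorem~\ref{thm:nerve} and the target carries the cone structure of Construction~\ref{construction:order}. Similarly write $h_\UU : |\order(N_\UU)| \to |N_\UU|$ for an $N_\UU$-map to the face complex, and define $g_\VV, h_\VV$ in the same way. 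By Proposition~\ref{prop:unique-H-type}, the homotopy class of each composite $h_\UU g_\UU$ does not depend on these choices, so it suffices to treat any such choices.

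The first step is to check that $\phi_\alpha$ is a poset map $N_\UU \to N_\VV$ and that $f$ is a $\op{\phi_\alpha}$-map $\uall^{\op{N_\UU}} \to \vall^{\op{N_\VV}}$. For $\sigma \in N_\UU$ we have
\[
f(U^\sigma) \subseteq \bigcap_{i \in \sigma} V_{\alpha(i)} = V^{\alpha(\sigma)} .
\]
Since $U^\sigma \neq \emptyset$, this forces $V^{\alpha(\sigma)} \neq \emptyset$, so $\alpha(\sigma) \in N_\VV$. Monotonicity is clear, since $\sigma \subseteq \tau$ implies $\alpha(\sigma) \subseteq \alpha(\tau)$. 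The same inclusion shows that $f$ is a $\op{\phi_\alpha}$-map, because $\op{\phi_\alpha}$ is the same function as $\phi_\alpha$ and the indexing of the cells is unchanged. Also, $f$ restricts to a continuous map $\uall \to \vall$.

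Next come the two squares. In the left square, both $g_\VV \circ f$ and $|\op{\phi_\alpha}| \circ g_\UU$ are continuous $\op{\phi_\alpha}$-maps from the CL-complex $\uall^{\op{N_\UU}}$ to $|\order(\op{N_\VV})|^{\op{N_\VV}}$. The second composite is a $\op{\phi_\alpha}$-map by Proposition~\ref{prop:order-map} applied to $\op{\phi_\alpha}$, together with Example~\ref{ex:poset-system-maps}(4). Hence the two composites are homotopic by Theorem~\ref{thm:CL-map}, which is exactly the content of Proposition~\ref{prop:unique-H-class}. The middle identification $|\order(\op{N})| = |\order(N)|$ carries $|\op{\phi_\alpha}|$ to $|\phi_\alpha|$ on the nose, because $\order(\phi) = \order(\op{\phi})$. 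In the right square, $h_\VV \circ |\order(\phi_\alpha)|$ and $|\phi_\alpha| \circ h_\UU$ are both $\phi_\alpha$-maps from $|\order(N_\UU)|^{N_\UU}$ to the face complex $|N_\VV|^{N_\VV}$. For the first this follows from Proposition~\ref{prop:order-map}, and for the second from Example~\ref{ex:poset-system-maps}(2). Here one uses that the simplicial map induced by $\alpha$ on nerves realizes $\phi_\alpha$, and that finite simplicial complexes are CL-complexes by Example~\ref{ex:CL}. Again Theorem~\ref{thm:CL-map} gives a homotopy. Pasting the two homotopy-commutative squares, with the strictly commuting middle square between them, yields the claim.

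I expect no real obstacle. The only delicate point is the bookkeeping in the first step: one must check that $\phi_\alpha$ lands in $N_\VV$ and that the direction of the order matches. The cells $U^\sigma$ shrink as $\sigma$ grows, so $f$ is compatible with $\op{\phi_\alpha}$ rather than with $\phi_\alpha$, and one must keep track of which CL-structure, for $N$ or for $\op{N}$, is in use on each space.
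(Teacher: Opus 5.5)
Your proposal is correct and is essentially the paper's proof: you use the same factorization into three squares through $|\order(\op{N})| = |\order(N)|$. The two outer squares commute up to homotopy because both paths around each are $\op{\phi_\alpha}$-maps (resp.\ $\phi_\alpha$-maps) between CL-complexes, and the middle square commutes strictly since $\order(\phi_\alpha) = \order(\op{\phi_\alpha})$; your explicit check that $\phi_\alpha$ lands in $N_\VV$ and that $f$ is an $\op{\phi_\alpha}$-map supplies bookkeeping the paper leaves implicit.
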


\begin{remark}
    It follows from
    the statement of
    Proposition~\ref{prop:nerve-natural}
    that if $\beta: I \to J$ is another function such that $f(U_i) \subseteq V_{\beta(i)}$ for all $i \in I$, then $|\phi_\alpha|, |\phi_\beta| : |N_{\UU}| \to |N_{\VV}|$ are homotopic. What matters, then, is the existence of~$\alpha$ and not what specific $\alpha$ is chosen.
\end{remark}

\begin{proof}[Proof of Proposition~\ref{prop:nerve-natural}]
    We factor the diagram into three squares, where each of the eight objects is a CL-complex and each of the eight arrows is a CL-map.
    \[
    \xymatrix{
    \uall
        \ar[r]
        \ar[d]_-{f}
    &
    |\order(\op{N_{\UU}})|
        \ar@{=}[r]
        \ar[d]^-{|\order(\op{\phi_\alpha})|}
    &
    |\order(N_{\UU})|
        \ar[r]
        \ar[d]^-{|\order(\phi_\alpha)|}
    &
    |N_{\UU}|
        \ar[d]^-{|\phi_\alpha|}
    \\
    \vall
        \ar[r]
    &
    |\order(\op{N_{\VV}})|
        \ar@{=}[r]
    &
    |\order(N_{\VV})|
        \ar[r]
    &
    |N_{\VV}|
    }
    \]
    The four horizontal arrows are $\id$-maps (with respect to the posets $\op{N_{\UU}}$, $N_{\UU}$, $\op{N_{\VV}}$, $N_{\VV}$).
    The left two vertical arrows are $\op{\phi_{\alpha}}$-maps. The right two vertical arrows are $\phi_{\alpha}$-maps.
    Then the two paths around the left-hand square are both $\op{\phi_{\alpha}}$-maps so they are homotopic to each other. Likewise the two paths around the right-hand square are both $\phi_{\alpha}$-maps so they are homotopic to each other. Thus the two outer squares commute up to homotopy.
    The middle square commutes because $\order(\phi) = \order(\op{\phi})$ for any poset map~$\phi$.
\end{proof}

\subsection{Sheehy's multicover nerve theorem}
\label{subsec:sheehy}
I thank \'{A}ngel Javier Alonso for suggesting the following application of CL-complexes.
In~\cite{Sheehy_2012} Sheehy determined the homotopy type of the region in a topological space consisting of points belonging to at least $k$ sets in a cover~$\UU$, identifying it with the homotopy type of a particular subcomplex of the barycentric subdivision of the nerve of the cover. When $k=1$, the $k$-covered region is $\uall$ and the subcomplex is the full barycentric subdivision of the nerve, which is homeomorphic to the nerve itself so we recover the Nerve Theorem.

We generalize Sheehy's result slightly. Let $I$ be the indexing set for a cover~$\UU$ of a topological space. Let $K$ be an up-closed subset of the complete simplicial complex $\Delta(I)$. We may call this a \emph{covering requirement}. Then
\[
U^K
=
\bigcup_{\sigma \in K} U^\sigma
\]
is precisely the subset of the space consisting of points $x$ covered by `enough' of the sets $U_i$, in the sense that
$
\{ i \in I \mid x \in U_i \}
$
is a simplex in~$K$.

\begin{example}
\label{ex:Sheehy-k}
    If we define
    \[
    K_k
    =
    \{
    \sigma \in \Delta(I)
    \mid
    \text{$\sigma$ has at least $k$ vertices}
    \}
    \]
    then $U^{K_k}$ is the $k$-covered region.
\end{example}

We define the simplicial complex $\nerve(\UU)$ and the poset $N = N_{\UU}$ as in Equation~\ref{eq:nerve}.

\begin{theorem}[Sheehy's multicover nerve theorem]
\label{thm:multi-nerve}
    Let $\UU = (U_i \mid i \in I)$ be a good tame cover in a topological space, with $I$ finite. Let $K$ be a covering requirement.
    Then $U^K$ is the underlying space of a CL-complex for $\op{(K \cap N)}$. It is homotopy equivalent to the induced subcomplex of $\order(N)$ spanned by the vertices corresponding to the elements of~$K \cap N$.
\end{theorem}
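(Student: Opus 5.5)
The plan is to mimic the proof of Theorem~\ref{thm:nerve}, restricting the $\op{N}$-system there to the subposet $K \cap N$. Concretely, consider the family $(U^\sigma \mid \sigma \in K \cap N)$ as an $\op{(K\cap N)}$-system in $U^K$. It is monotone for $\op{(K\cap N)}$ because $\sigma \subseteq \tau$ implies $U^\tau \subseteq U^\sigma$. It covers $U^K$ by definition, since $U^\sigma = \emptyset$ for $\sigma \in K - N$ and so $U^K = U^{K\cap N}$.

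The first point to record is that $K \cap N$ is down-closed in $\op{N}$. Indeed, $K$ is up-closed in $\Delta(I)$, so $K \cap N$ is up-closed in $N$, hence closed in $\op{N}$. By Example~\ref{ex:subcomplex}, the subsystem of the CL-complex $\uall^{\op{N}}$ constructed in Theorem~\ref{thm:nerve} indexed by the closed subset $K\cap N \leq \op{N}$ is a subcomplex. By Example~\ref{ex:CL}(2) it is itself a CL-complex, with underlying space $\uall^{K\cap N} = U^K$.

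Separation can also be checked directly, as a sanity check. For $x \in U^K$ the set $m(x) = \{i \mid x \in U_i\}$ lies in $N$. It contains some $\sigma \in K$ with $x \in U^\sigma$, so $m(x) \in K$ by up-closure. Hence $m(x)$ is the $\op{(K\cap N)}$-smallest element of the indicator set. Cellularity is inherited from goodness, and link-cofibrancy from tameness via the subcomplex argument. This establishes the first assertion.

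For the homotopy type, Convention~\ref{conv:poset-homotopy} and Corollary~\ref{cor:conventions-agree} say that $U^K$ has the homotopy type of $|\order(\op{(K\cap N)})| = |\order(K\cap N)|$, using $\order(P) = \order(\op{P})$. The order complex $\order(K\cap N)$ consists of the chains in $N$ all of whose elements lie in $K\cap N$. This is exactly the induced (full) subcomplex of $\order(N)$ on the vertex set $K\cap N$, which gives the second assertion.

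I expect no serious obstacle. The only step requiring care is the observation that up-closure of $K$ in $\Delta(I)$ translates into down-closure in $\op{N}$; this is what lets us invoke the subcomplex result rather than face the uncontrolled up-closed subsystems mentioned after Theorem~\ref{thm:QTA-dual}.
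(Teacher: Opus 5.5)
Your proposal is correct and follows essentially the same route as the paper. You show that $K\cap N$ is closed in $\op{N}$, take the corresponding subcomplex of the CL-complex from Theorem~\ref{thm:nerve} (whose underlying space is $U^K$ because $U^\sigma=\emptyset$ for $\sigma\in K-N$), and identify $|\order(\op{(K\cap N)})| = |\order(K\cap N)|$ with the induced subcomplex of $\order(N)$, so your extra direct separation check is harmless but not needed.
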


Note that that $\order(N)$ can be identified with the barycentric subdivision of $\nerve(\UU)$.

\begin{proof}
Since $K$ is up-closed in~$\Delta(I)$, it follows that $\op{(K \cap N)}$ is closed in $\op{N}$ and therefore specifies a subcomplex of the CL-complex in Theorem~\ref{thm:nerve}. The underlying space of this subcomplex is
\[
\bigcup_{\sigma \in K \cap N} U^\sigma
\;=\;
\bigg[
\,
    \bigcup_{\sigma \in K \cap N} U^\sigma
\,
\bigg]
\,\cup\,
\bigg[
\,
    \bigcup_{\sigma \in K - N} \emptyset
\,
\bigg]
\;=\;
\bigcup_{\sigma \in K} U^\sigma
\;=\;
U^K
\]
which confirms the first assertion.

For the second assertion, we have a canonical homotopy equivalence
\[
\xymatrix{
U^K
    \ar[r]
&
|\order(\op{(K \cap N)})|
    \ar@{=}[r]
&
|\order(K \cap N)|
}
\]
where the first map is any continuous $\op{(K \cap N)}$-map of CL-complexes. Since chains in the poset $K \cap N$ are the same thing as chains in the poset~$N$ whose elements belong to $K \cap N$, it follows that $\order(K \cap N)$ is the specified induced subcomplex of $\order(N)$.
\end{proof}

We finish with a naturality statement.

\begin{proposition}[multicover nerve theorem naturality]
\label{prop:multi-nerve-natural}
    Let $\UU = (U_i \mid i \in I)$ and $\VV = (V_i \mid i \in I)$ be good tame covers in topological spaces $X,Y$ with $I$ finite. Let $f : X \to Y$ be a continuous map such that $f(U_i) \subseteq V_{i}$ for all $i \in I$.
    Then, for any covering requirement $K$, the diagram
    \[
    \xymatrix{
    U^K
        \ar[rrr]_-{\simeq}^-{\mathbf{Theorem~\ref{thm:multi-nerve}}}
        \ar[d]_-{f}
    &&&
    |\order(K \cap N_{\UU})|
        \ar[d]^-{\text{\rm inclusion}}
    \\
    V^K
        \ar[rrr]_-{\simeq}^-{\mathbf{Theorem~\ref{thm:multi-nerve}}}
    &&&
    |\order(K \cap N_{\VV})|
    }
    \]
    commutes in the homotopy category.
\end{proposition}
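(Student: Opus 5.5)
The plan is to mimic the proof of Proposition~\ref{prop:nerve-natural}: factor the square into smaller squares in which every object is a CL-complex and every arrow is a CL-map. Each square will then commute up to homotopy because both composites are $\psi$-maps for the same poset map $\psi$, and Theorem~\ref{thm:CL-map} makes such maps homotopic.

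Write $N_\UU, N_\VV$ for the nerves and set $A = K \cap N_\UU$ and $B = K \cap N_\VV$. First I will check that $f$ is a CL-map. Since $f(U_i)\subseteq V_i$, we get $f(U^\sigma)\subseteq \bigcap_{i\in\sigma} f(U_i)\subseteq V^\sigma$, so $U^\sigma\neq\emptyset$ forces $V^\sigma\neq\emptyset$. Hence $N_\UU\subseteq N_\VV$ and $A\subseteq B$. The inclusion $\iota:A\to B$ is a poset map, and hence also a map $\op{\iota}:\op{A}\to\op{B}$. The computation above also shows that $f$ restricts to $U^K\to V^K$ and satisfies $f(U^\sigma)\subseteq V^{\iota(\sigma)}$. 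So $f$ is a continuous $\op{\iota}$-map between the CL-complexes $(U^K)^{\op{A}}$ and $(V^K)^{\op{B}}$ supplied by Theorem~\ref{thm:multi-nerve}.

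Next I will build the factored diagram
\[
\xymatrix{
U^K \ar[r]\ar[d]_-{f} & |\order(\op{A})| \ar@{=}[r]\ar[d]^-{|\op{\iota}|} & |\order(A)| \ar[d]^-{\text{inclusion}}\\
V^K \ar[r] & |\order(\op{B})| \ar@{=}[r] & |\order(B)|
}
\]
The horizontal arrows on the left are the $\op{A}$- and $\op{B}$-maps of CL-complexes that define the equivalences of Theorem~\ref{thm:multi-nerve}. The middle vertical arrow is $|\op{\iota}|$, which is a $\op{\iota}$-map for the cone CL-structures by Proposition~\ref{prop:order-map}. In the left square, both composites are continuous $\op{\iota}$-maps from the CL-complex $(U^K)^{\op{A}}$ to the CL-complex $|\order(\op{B})|^{\op{B}}$, so Theorem~\ref{thm:CL-map} makes them homotopic. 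The right square commutes strictly: $\order(\iota)=\order(\op{\iota})$, and the realization of an inclusion of posets is exactly the inclusion of induced subcomplexes $|\order(A)|\subseteq|\order(B)|$ inside $|\order(N_\VV)|$.

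The main obstacle I expect is bookkeeping rather than mathematics. I need to confirm that the equivalences labelled by Theorem~\ref{thm:multi-nerve} really are these composites, that is, any $\op{A}$-map into $|\order(\op{A})|$ followed by the identification. I also need to confirm that the identification of $\order(K\cap N_\UU)$ with an induced subcomplex of $\order(N_\UU)$ is compatible with its identification as an induced subcomplex of $\order(N_\VV)$. The latter holds because $N_\UU\subseteq N_\VV$ is a subposet, so chains in $A$ are chains in $B$. With both checks in place, pasting the two squares gives homotopy commutativity of the stated diagram.
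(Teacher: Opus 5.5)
Your proposal is correct and follows the paper's proof. It uses the same two-square factorization through $|\order(\op{(K\cap N_\UU)})|$: the left square commutes up to homotopy because both composites are $\op{\iota}$-maps of CL-complexes, and the right square commutes strictly because $\order(\iota)=\order(\op{\iota})$. The paper reads the left square as a restriction to subcomplexes of the left two squares in the proof of Proposition~\ref{prop:nerve-natural} (with $I=J$, $\alpha=\id_I$), while you apply Theorem~\ref{thm:CL-map} directly, which is the same argument made explicit. Your bookkeeping concern is settled by the proof of Theorem~\ref{thm:multi-nerve}, which defines the equivalence as exactly such a composite.
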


\begin{proof}
We factor the diagram into two squares.
\[
\xymatrix{
U^K
    \ar[r]
    \ar[d]_-{f}
&
|\order(\op{(K \cap N_{\UU})})|
    \ar@{=}[r]
    \ar[d]^-{\text{\rm inclusion}}
&
|\order(K \cap N_{\UU})|
    \ar[d]^-{\text{\rm inclusion}}
\\
V^K
    \ar[r]
&
|\order(\op{(K \cap N_{\VV})})|
    \ar@{=}[r]
&
|\order(K \cap N_{\VV})|
}
\]
This can be read as a `subdiagram' (restriction to subobjects) of the left two squares of the diagram in the proof of Proposition~\ref{prop:nerve-natural} (special case $I = J$, $\alpha = \id_I$) so it, too, commutes up to homotopy.
\end{proof}

\section*{Acknowledgements}

I gratefully thank Chad Giusti, Vladimir Itskov, Michael Robinson, Radmila Sazdanovic, Nikolas Schonsheck,
Melvin Vaupel, and Iris Yoon, my co-authors on~\cite{deSilva_GIRSSVY_2026}, for their generous encouragement to publish this work separately from our group project. My especial thanks to Iris for detailed comments on the entire manuscript, and to Niko and Melvin for helpful feedback on early material.
I am indebted to \'{A}ngel Javier Alonso for pointing out the connection to Don Sheehy's Multicover Nerve Theorem, and to Don himself for subsequent helpful conversations.
I thank Michael Kerber and Elena Wang for a momentum-generating invitation to give an early lecture on this work.

\bibliography{CL.bbl}


\end{document}